\numberwithin{equation}{section}
\def\A{\mathbb{A}}
\def\Bbb{\mathbb{B}}
\def\R{\mathbb{R}}
\def\C{\mathbb{C}}
\def\Q{\mathbb{Q}}
\def\Z{\mathbb{Z}}
\def\N{\mathbb{N}}
\def\Pbb{\mathbb{P}}
\def\Ocal{\mathcal{O}}
\def\Cfra{\mathfrak{C}}
\def\Cscr{\mathscr{C}}
\def\Gscr{\mathscr{G}}
\def\Cscr{\mathscr{C}}
\def\Xscr{\mathscr{X}}
\newtheorem{thm}{Theorem}[section]
\newtheorem{lem}[thm]{Lemma}
\newtheorem{ex}[thm]{Example}
\newtheorem{rem}[thm]{Remark}
\newtheorem{cor}[thm]{Corollary}
\newtheorem{prop}[thm]{Proposition}
\newtheorem*{hyp*}{Hypothesis}
\newcommand\Div{\mathrm{Div}}
\newcommand\Bs{\mathrm{Bs}}
\newcommand\numberthis{\addtocounter{equation}{1}\tag{\theequation}}
\newcommand\hhat{\hat{h}}
\newcommand{\id}{\mathrm{id}}
\DeclareMathOperator{\ddc}{dd^c}
\DeclareMathOperator{\supp}{supp}
\title{Marked points of families of hyperbolic automorphisms of smooth complex projective varieties}
\author{Yugang ZHANG}
\address{Laboratoire de Math\'ematiques d’Orsay, Université Paris-Saclay, 307 Rue Michel Magat, 91405 Orsay,
France}
\email{\href{yugang.zhang@universite-paris-saclay.fr}{yugang.zhang@universite-paris-saclay.fr}}
\begin{document}
\begin{abstract}
    Let $\pi : X\to \Lambda$ be a flat family of smooth complex projective varieties parameterized by a smooth quasi-projective variety $\Lambda$, and let $f\colon X\to X$ be a family of automorphisms with positive topological entropy. Suppose $\sigma : \Lambda \to X$ is a marked point, i.e., it is a rational section of $\pi$. We propose two methods to measure the stability, normality, or periodicity of the family given by $t \mapsto f_t^n(\sigma(t))$.
    
    First, from an algebraic perspective, we construct geometric canonical height functions that have desirable properties. Second, from an analytic viewpoint, we construct a positive closed $(1,1)$-current with continuous local potential. When $\Lambda$ is a curve, we demonstrate that these two constructions actually coincide, providing a unified approach to understanding the dynamical behavior of the family.
    As an application of the algebraic method, we prove a special case of the Kawaguchi--Silverman conjecture over complex function fields.
\end{abstract}
\maketitle
\section{Introduction}
\subsection{Background and motivation}
Let $f\colon \Lambda\times \Pbb^1_\C \to \Lambda\times \Pbb^1_\C$ be an algebraic family of rational maps of degree $d\geq 2$ parameterized by a smooth complex quasi-projective variety $\Lambda$, defined by $f(t,z)=(t,f_t(z))$. A \emph{marked point} is a morphism $\sigma: \Lambda \to \Pbb_\C^1,$ and it is \emph{stable} if the family $\left\{t\mapsto f_t^n(\sigma(t))\right\}_{n\geq 1}$ is normal on $\Lambda(\C)$. For simplicity, suppose that all the critical points are marked, meaning there exist morphisms $c_i: \Lambda \to \Pbb^1, 1\leq i\leq 2d-2$, such that for all $t\in \Lambda(\C)$, $c_i(t)$ are critical points of $f_t.$ This is always possible by some base change. We say that the family $f$ is \emph{dynamically stable} if all the critically marked points are stable. 

McMullen \cite[Lemma 2.1]{Mcmullen87} proved that if the family $f$ is dynamically stable, then either it is isotrivial --- meaning any two rational maps $f_{t_1}$ and $f_{t_2}$ are conjugate by a Möbius transformation --- or all critical points of $f_t$ are preperiodic for any $t$. Dujardin and Favre \cite[Theorem 2.5]{DFAJM08} generalized this result, showing that for a non-isotrivial family, a critically marked point is stable if and only if it is persistently preperiodic. DeMarco \cite[Theorem 1.1]{DeMarcoANT16} further extended this result to apply to any marked point.

One way to measure the stability of a marked point is through the use of canonical height functions. This theory was originally introduced by Néron and Tate for abelian varieties defined over number fields and later generalized by Call and Silverman to varieties with a polarized endomorphism, defined over a field with a product formula; see \cite{CallSilverman93}. For simplicity, assume that the dimension of $\Lambda$ is one. In this case, the \emph{geometric canonical height} can be defined as the limit
\begin{align*}
    \hhat_f(\sigma)\coloneqq \lim_{n\to+\infty}\frac{1}{d^n}\deg f_t^n(\sigma(t)).
\end{align*}
For a non-isotrivial family $f$, a marked point $\sigma$ is stable if and only if $\hhat_f(\sigma) = 0$ (\cite[Theorem 1.4]{DeMarcoANT16}), which in turn implies that $\sigma$ is preperiodic (\cite{Baker09, Benedetto05}).

To a given family $f$, DeMarco associated a positive closed $(1,1)$-current $\hat{T}_f$, known as the \emph{bifurcation current}, whose support is the non-stable locus of the critically marked points. In particular, the bifurcation current vanishes if and only if the canonical height of all critically marked points vanishes.

The situation in higher dimensional projective space $\Pbb^n$ for $n\geq 2$ is more complicated: a marked point can have canonical height zero but still be non-preperiodic. Nevertheless, Gauthier and Vigny \cite{gauthier2020geometric} established the following. To any (rational) marked point $\sigma\colon \Lambda \to \mathbb{P}_\mathbb{C}^n$, they associated a positive closed $(1,1)$-current and showed that its mass is equal to the canonical height of $\sigma$. They further demonstrated that, outside of an "isotrivial invariant subvariety", $\sigma$ is preperiodic if and only if it has vanishing height (see also \cite{Chatzidakis08} for a model theoretic approach).

The goal of this paper is to establish analogous results for families of automorphisms with positive topological entropy that satisfy the hypothesis (H1) and (H2) (see below). In particular, these results can be applied to any family of automorphisms with ositive topological entropy of smooth complex projective surfaces or projective hyperk\"ahler varieties.

\subsection{Some definitions and notations}\label{intro_sect_def}
\subsubsection*{Dynamical degrees}
Let $f:X\to X$ be a surjective endomorphism of smooth projective variety defined over an algebraically closed field of characteristic zero. Its $p$\emph{-th dynamical degree} $\lambda_p(f)$ is defined as $ \lambda_p(f)\coloneqq \lim_{n\to +\infty}{(f^n)}^*(A^p)\cdot A^{\dim X -p}$, where $A$ is any ample divisor on $X$. This quantity is an interesting birational invariant which reflects various dynamical properties of the map $f.$ For example, when the field is the complex numbers $\C,$ $\lambda_p(f)$ is the spectral radius of the action $f^*$ on $H^{p,p}(f,\R)$. Moreover, Gromov \cite{Gromov03} (for the upper bound part $\leq$) and Yomdin \cite{Yomdin87} proved that $h_\mathrm{top}(f) = \max_{1\leq p\leq \dim X}\log \lambda_p(f)$, where $h_\mathrm{top}(f)$ is the topological entropy of $f.$ Following Cantat in the case of surfaces, when $f$ is biholomorphic and has positive topological entropy, we call it \emph{hyperbolic} or \emph{loxodromic}.

Similar upper bound of topological entropy of holomorphic rational maps were obtained by Dinh and Sibony \cite{DS04ens,DS05}. Over non-archimedean fields, it is a recent result of Favre, Truong and Xie \cite{favre2022topologicalentropyrationalmap}.

\subsubsection*{Isotriviality}
Let $\pi: X\to \Lambda$ be a (surjective) flat family of smooth complex projective varieties. Let $\overline{X}$ and $B$ be smooth compactifications of $X$ and $\Lambda$ such that there exists a surjective morphism $\overline{X} \to B$ extending $\pi.$ By a  slight abuse of notation, we still denote it by $\pi$.

The family $\pi$ is \emph{isotrivial} if there exists an \'etale open subset $V\to \Lambda$ such that $X\times_\Lambda V$ is trivial, i.e., there exists a complex variety $Y$ such that $Y\times_\C V \simeq X\times_\Lambda V$.  Let $f:X\to X$ be a fiber-preserving automorphism, i.e. $\pi \circ f= f$. We say that the family ($\pi,f$) is \emph{isotrivial} (resp. \emph{birationally isotrivial}) if for general parameters $t_1, t_2\in \Lambda(\C)$, $f_{t_1}$ and $f_{t_2}$ are conjugate by an isomorphism (resp. birational map) of the fibers $X_{t_1}$ and $X_{t_2}$. If the pair $(\pi, f)$ is isotrivial, then $\pi$ is also isotrivial (see \cite{Kovacs05}). The converse holds when the automorphism group of a general fiber of $\pi$ is discrete. This condition is satisfied if $\pi$ is a family of projective hyperkähler manifolds (see, e.g., \cite[4.1]{debarre2020hyperkahlermanifolds}). In fact, due to the universal property of the automorphism group (which represents the functor \eqref{authilbert}), there exists a morphism from $V$ to $\text{Aut}(Y)$ whose image is a single point $g$. As a result, $f_t = g$ for all $t \in V(\mathbb{C})$.

\subsubsection*{Two hypotheses}
Let $\pi\colon\, \overline{X} \to B$ be a surjective morphism of smooth complex projective varieties which extends a flat family $X\to \Lambda$ as above. Let $f\colon X\to X$ be a fiber-preserving automorphism.
We suppose that for some (thus for all) $t\in \Lambda(\C)$, the map $f_t$ is hyperbolic with first dynamical degree $\lambda_+ \coloneqq \lambda_1(f_t)$. Similarly denote $\lambda_-\coloneqq \lambda_1(f_t^{-1})$. We introduce the following two hypotheses, denoted by (H1) and (H2), in the same vein as \cite[Definition 2.10]{LesieutreSatriano21}:

\begin{hyp*}~
    \begin{enumerate}
    \item[(H1)] There exists a $\R$-divisor $D^\pm$ on $\overline{X}$ such that $f^*D^\pm|_{\Lambda} \sim_\R \lambda_\pm D^\pm|_{\Lambda}.$ Set $D\coloneqq D^+ + D^-$. For any $t\in \Lambda(\C)$, the restriction $D^\pm_t$ on the fiber $X_t$ is nef, and $D_t$ is big and nef.
    \item[(H2)] The set of points $x\in X$, such that $x$ is saddle periodic in some fiber $X_t$ with respect to the map $f_t$, is Zariski dense in $X.$
\end{enumerate}
\end{hyp*}
Here, a periodic point $x\in X$ above $t$ of exact period $k$ is \emph{saddle} if both eigenvalues of the differential of $f^k_t$ at $x$ are not on the unit circle.

The two hypotheses are satisfied, up to some base change, for families of hyperbolic automorphisms of smooth complex projective surfaces or projective hyperk\"ahler varieties, as discussed in Section~\ref{sectexamples} Examples below.

\subsection{Height functions}\label{introheightfunction}
Let ($\pi,f$) be a family of hyperbolic automorphisms of smooth complex projective varieties satisfying (H1). Fix an ample divisor $M$ on $B$. 

A \emph{marked point} $\sigma: B \to \overline{X}$ is a rational section of $\pi$. Denote by $\Sigma$ the set of marked points. The \emph{height function} $h:\Sigma \to \R$ is defined to be 
\begin{align*}
    h(\sigma)\coloneqq \sigma \cdot D^\pm \cdot \pi^*(M)^{b-1}.
\end{align*}
The \emph{forward/backward geometric canonical height function} $\hhat^\pm_f: \Sigma \to \R$ of $f$ is defined to be 
\begin{align}\label{defHeight+-}
    \hhat^\pm_f(\sigma) \coloneqq \lim_{n\to \pm \infty}\frac{1}{\lambda_{\pm}^n} h(f^n\circ \sigma).
\end{align}
The \emph{geometric canonical height function} of $f$ is 
\begin{align}\label{defHeight}
    \hhat_f\coloneqq \hhat_f^+ + \hhat_f^-.
\end{align}
A marked point $\sigma$ is called \emph{stable} if $\hhat_f(\sigma)=0$.
\begin{thm}[cf.~Propositions~\ref{constructionHeight} and~\ref{stablebound}]\label{thm:main1}
    Under assumption (H1), the canonical height functions $\hhat_f^\pm$ and  $\hhat_f$ are well-defined, non-negative, and uniquely determined by the following two properties.
    \begin{itemize}
        \item There exists a positive constant $C_f>0$ such that for any marked point $\sigma$, we have
            \begin{align*}
                |\hhat_f^\pm(\sigma) - h(\sigma)| < C_f.
            \end{align*}
        \item $\frac{1}{\lambda_\pm} \hhat_f^\pm (f^{\pm 1}\circ \sigma ) = \hhat_f^\pm(\sigma)$.
    \end{itemize}
\end{thm}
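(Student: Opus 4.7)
My plan is to adapt Tate's telescoping argument to this geometric setting. First, I would exploit (H1) to write
\[ f^*D^\pm = \lambda_\pm D^\pm + E^\pm \quad \text{in } N^1(\overline X)_\R, \]
where $E^\pm$ is an $\R$-divisor supported on the boundary $\pi^{-1}(B \setminus \Lambda)$: the class $f^*D^\pm - \lambda_\pm D^\pm$ vanishes in $N^1(X)_\R$, so by the localization (excision) sequence for divisor classes it is represented by vertical divisors over $B \setminus \Lambda$. If $f$ does not extend regularly to $\overline X$, one works on a resolution of indeterminacy so that $f^*$ is defined on divisor classes; since only intersection numbers enter the definition of $h$, this is harmless.

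The crucial technical step is the uniform one-step error bound
\[ \bigl| h(f^{\pm 1} \circ \sigma) - \lambda_\pm h(\sigma) \bigr| = \bigl| \sigma \cdot E^\pm \cdot \pi^*(M)^{b-1} \bigr| \leq C, \]
with $C$ independent of the marked point $\sigma$. By the projection formula the intersection equals $\pi_*(\sigma \cdot E^\pm) \cdot M^{b-1}$; decomposing $E^\pm = \sum_i c_i G_i$ into irreducible components supported over boundary divisors $\Delta_i = \pi(G_i) \subset B \setminus \Lambda$, each pushforward $\pi_*(\sigma \cdot G_i)$ is a multiple of $[\Delta_i]$ whose coefficient counts the intersection multiplicity of the closure of $\sigma$ with $G_i$ over the generic point of $\Delta_i$. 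Since $\sigma$ gives a single point in each boundary fiber, this multiplicity is bounded by a local invariant of $G_i$ (essentially its multiplicity in the fiber), independent of $\sigma$.

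Applying the bound with $f^n \circ \sigma$ in place of $\sigma$ and dividing by $\lambda_\pm^{n+1}$ yields
\[ \left| \lambda_\pm^{-(n+1)} h(f^{n+1}\sigma) - \lambda_\pm^{-n} h(f^n \sigma) \right| \leq C \lambda_\pm^{-(n+1)}, \]
which is summable since $\lambda_\pm > 1$. Hence the sequence is Cauchy, defining the limit $\hhat_f^\pm(\sigma)$; summing the telescoping estimates yields $|\hhat_f^\pm(\sigma) - h(\sigma)| \leq C/(\lambda_\pm - 1)$, and the functional equation $\hhat_f^\pm(f^{\pm 1}\sigma) = \lambda_\pm \hhat_f^\pm(\sigma)$ is immediate from re-indexing the defining limit.

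Non-negativity follows from the fiberwise nefness of $D^\pm$ in (H1): since $f$ is fiber-preserving, $(f^n)^*D^\pm$ remains fiberwise nef, so the intersection $\sigma \cdot (f^n)^*D^\pm \cdot \pi^*(M)^{b-1}$ pairs a non-negative fiberwise quantity against the ample class on $B$, and the property passes to the limit. For uniqueness, any function $\tilde h$ satisfying both characterizing properties differs from $\hhat_f^\pm$ by a bounded function $\psi$ with $\psi \circ f^{\pm 1} = \lambda_\pm \psi$; iterating gives $|\psi(\sigma)| = \lambda_\pm^{-n}|\psi(f^{\pm n}\sigma)| \leq 2 C_f \lambda_\pm^{-n} \to 0$, so $\psi \equiv 0$. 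The main obstacle is the uniform error bound: in the arithmetic setting this comes automatically from the height machine, but here it requires directly analyzing the boundary geometry of rational sections, using that a section meets each irreducible fiber component with bounded multiplicity.
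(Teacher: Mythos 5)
Your proof is correct and takes essentially the same route as the paper: Tate's telescoping argument driven by a uniform bound on the one-step error, the standard argument for uniqueness, and relative nefness for non-negativity. The only material difference is the mechanism for the one-step bound. The paper builds an explicit chain of resolutions $X_i$ and observes that the vertical error divisor $V^\pm$ on $X_1$ satisfies $-\pi_1^*N < V^\pm < \pi_1^*N$ for an effective divisor $N$ on $B$, so intersecting it against a section and $\pi_1^*(M)^{b-1}$ is bounded by $N\cdot M^{b-1}$. You instead decompose the vertical error into irreducible fiber components and bound each multiplicity by using that a section has total degree one over each boundary divisor; both encode the same geometric fact, and yours is marginally more elementary while the paper's two-sided vertical bound is tidier and handles all components at once. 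Two places where your write-up is lighter than the paper's deserve a note. First, the failure of $f$ to extend regularly to $\overline X$ is not entirely ``harmless'': the identity $h(f^{\pm 1}\circ\sigma)-\lambda_\pm h(\sigma)=\sigma\cdot E^\pm\cdot\pi^*(M)^{b-1}$ genuinely requires passing to a smooth model $X_1$ on which both $\varphi_1$ and $g_1^\pm = f^{\pm 1}\circ\varphi_1$ are morphisms, replacing $\sigma$ by its strict transform, and defining $E^\pm$ there; this is precisely what the paper's tower accomplishes (a single resolution suffices for the one-step bound). Second, the non-negativity is asserted a bit loosely; the precise input is Lemma~\ref{lemrelativenef} on non-negativity of intersections of $\pi$-nef classes against horizontal subvarieties.
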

The canonical height functions are well-behaved outside some Zariski closed subset.

\begin{thm}\label{thm:main2}
    There exist a positive constant $\varepsilon_f>0$ and an $f$-invariant Zariski closed subset $\Bbb_+\subset X$, i.e., $f(\Bbb_+)=f^{-1}(\Bbb_+)=\Bbb_+$, such that given any marked point $\sigma$ with $\sigma(B)\not\subset \Bbb_+$, we have the weak Northcott property: if $\hhat_f(\sigma)<\varepsilon_f$, then $\hhat_f(\sigma)=0$. In particular, for such $\sigma$, $\hhat_f(\sigma)=0 \iff \hhat_f^+(\sigma)=0 \iff \hhat_f^-(\sigma)=0$.

    If moreover the family ($\pi$, $f$) satisfies $(\mathrm{H}2)$ and is not birationally isotrivial, then the union of the image of the stable marked points is not Zariski dense in $X$.
\end{thm}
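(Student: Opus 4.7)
The plan is to combine a soft Noetherian argument that builds $\Bbb_+$ from marked points of very small positive canonical height (yielding the weak Northcott gap and the equivalence of height vanishings), together with a dynamical rigidity argument at the saddle periodic points supplied by (H2) to rule out Zariski density of stable marked points.

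For the first part, I would proceed as follows. For each $\varepsilon>0$ let $Z_\varepsilon\subset X$ be the Zariski closure of $\bigcup\{\sigma(B):\sigma\in\Sigma,\ 0<\hhat_f(\sigma)\leq\varepsilon\}$. The chain $\{Z_\varepsilon\}$ is monotone decreasing as $\varepsilon\to 0$, so by Noetherianity of the Zariski topology on $X$ there exists $\varepsilon_f>0$ at which it stabilizes: $Z:=Z_\varepsilon=Z_{\varepsilon_f}$ for every $0<\varepsilon\leq\varepsilon_f$. Define $\Bbb_+$ to be the Zariski closure in $X$ of $\bigcup_{n\in\Z}f^n(Z)$; it is Zariski closed and, since $f$ is a Zariski homeomorphism, $f$-invariant. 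If $\sigma(B)\not\subset\Bbb_+$ and $\hhat_f(\sigma)\leq\varepsilon_f$, then $\sigma(B)\not\subset Z$, forcing $\hhat_f(\sigma)=0$ by definition of $Z_{\varepsilon_f}$. The equivalence of the vanishings of $\hhat_f,\hhat_f^+,\hhat_f^-$ outside $\Bbb_+$ will then follow by combining non-negativity of $\hhat_f^\pm$ from Theorem~\ref{thm:main1} with the iterated functional equations $\hhat_f^+(f^n\circ\sigma)=\lambda_+^n\hhat_f^+(\sigma)$ and $\hhat_f^-(f^n\circ\sigma)=\lambda_-^{-n}\hhat_f^-(\sigma)$: assuming $\hhat_f^+(\sigma)=0$ but $\hhat_f^-(\sigma)>0$, one gets $\hhat_f(f^n\circ\sigma)=\lambda_-^{-n}\hhat_f^-(\sigma)\to 0^+$ as $n\to+\infty$, hence $0<\hhat_f(f^n\circ\sigma)<\varepsilon_f$ for $n\gg 0$; since $\Bbb_+$ is $f$-invariant, $(f^n\circ\sigma)(B)\not\subset\Bbb_+$ also, and the weak Northcott property applied to $f^n\circ\sigma$ yields the contradiction $\hhat_f(f^n\circ\sigma)=0$.

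For the non-density statement, I would argue by contradiction: assuming (H2) and non-birational-isotriviality, suppose $S:=\overline{\bigcup\{\sigma(B):\hhat_f(\sigma)=0\}}$ equals $X$. Fix any saddle periodic point $x_0\in X_{t_0}$ of exact period $k$ supplied by (H2); applying the stable-manifold / implicit-function theorem to $f^k$ near $x_0$ produces a local holomorphic section $\tilde{x}\colon U\to X$ over some open $U\ni t_0$ such that $\tilde{x}(t)$ is a saddle periodic point of $f_t^k$ for every $t\in U$. Hyperbolicity at $\tilde{x}(t)$ compels any stable marked point $\sigma$ with $\sigma(t_0)$ close enough to $x_0$ to satisfy $\sigma|_U=\tilde{x}$, so the density assumption upgrades Zariski density of $S$ to Zariski density of the collection of global rational sections that coincide, near suitable saddle points, with such $\tilde{x}$'s. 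A monodromy/moduli argument, exploiting that the set of period-$k$ saddle points of $f_t$ is finite in each fiber, then converts this dense supply of persistently-periodic rational sections into a birational conjugation between general fibers carrying the $f_t$'s to a single map, contradicting non-birational-isotriviality. The main obstacle is making this rigidity-and-monodromy step rigorous: in particular, passing from the local analytic section $\tilde{x}$ to a global rational section and extracting a birational trivialization from the density of such sections. An alternative route bypasses this by invoking the positive closed $(1,1)$-current with continuous potential from the companion analytic theorem, whose vanishing on the graph of $\sigma$ characterizes stability; Zariski density of stable marked points would force that current to vanish globally, which via a potential-theoretic argument should yield birational isotriviality.
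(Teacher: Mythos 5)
Your Noetherian construction of $\Bbb_+$ is logically valid but tautological, and it does not establish what the theorem is actually meant to say. Nothing in your argument rules out $Z_{\varepsilon_f}=X$, in which case $\Bbb_+=X$ and the weak Northcott statement (and hence the equivalence of vanishings) becomes vacuous: there is simply no marked point with $\sigma(B)\not\subset \Bbb_+$. The content of the theorem is precisely that one can take $\Bbb_+$ to be a \emph{proper} closed subset, and the paper achieves this by identifying $\Bbb_+$ concretely as the augmented base locus $\Bbb_+(D)$ of the big divisor $D$ (Proposition~\ref{B_+genericfiber}, after a base change), with invariance furnished by Lemma~\ref{corB_+}. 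The gap itself (Proposition~\ref{gap}) is then a genuine theorem: it stabilizes the decreasing chain of Chow-variety closed sets $\Cfra_k\subset \Cscr_b^{C_A}(\overline X,A)$ of cycles with bounded degree under $f^{\pm k}$, and feeds the three uniform comparison constants of Proposition~\ref{uniformbound} (comparing $A^b$, $D\cdot\pi^*M^{b-1}$, $D^b$, and crucially using $\sigma(B)\not\subset\Bbb_+(D)$ for the last one) into a bootstrapping argument that shows small canonical height forces membership in the compact set $\Cfra_\infty$, hence stability. Your construction, besides potentially yielding $X$, cannot feed Theorem~\ref{thm:main3}, which requires $\Bbb_+$ to be the augmented base locus. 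Your derivation of the equivalence $\hhat_f=0\iff\hhat_f^+=0\iff\hhat_f^-=0$ from the gap plus invariance is, in itself, essentially the same as Corollary~\ref{corgap} and is correct.

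Your sketch of the non-density part captures the paper's guiding idea (rigidity at saddle periodic points), but as you yourself flag it is not a proof. The missing ingredient is exactly the Chow-variety compactness used in Proposition~\ref{propsparsity}: a stable marked point $\sigma$ through a saddle point $x_0\notin\Bbb_+(D)$ has all its iterates $f^n\circ\sigma$ in the compact set $\Cfra_\infty$, so one may extract a limit cycle $\sigma_\infty$; Remark~\ref{remarkHilbert} rules out vertical components of $\sigma_\infty$ through $x_0$, forcing $\sigma$ to lie in the local stable manifold (and by symmetry the unstable one), hence $\sigma=\sigma_0$ near $t_0$. This gives at most one stable marked point through each saddle point and makes the map from the incidence variety $\Xscr(Z)$ to $\overline X$ generically injective, hence birational, from which the birational conjugacy of general fibers is read off. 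Without that compactness the ``hyperbolicity compels $\sigma|_U=\tilde x$'' step does not go through: a stable marked point could a priori shadow the stable manifold without equaling the periodic section. Your alternative Green-current route is also incomplete and runs into a structural obstruction: the identity $\int_{\sigma(\Lambda)}\hat T_f^\pm=\hhat_f^\pm(\sigma)$ is established in the paper only for $\dim B=1$, and even there, vanishing of $\int_\sigma\hat T_f^\pm$ along a Zariski dense family of curves does not imply $\hat T_f^\pm=0$ (the Green currents are never zero), so ``force that current to vanish globally'' is not a valid step.
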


The so called weak Northcott property was first obtained by Baker \cite[Theorem 1.6]{Baker09} for families of rational maps of $\Pbb^1$ and generalized by the author to families of polarized endomorphisms \cite{yugang}. It is also known  \cite[Theorem 1.4]{Baker09} that we could not expect to have the exact analogue of the Northcott property as in the number field case, i.e., we could not replace ``there exists a positive constant $\varepsilon_f>0$'' in Theorem \ref{thm:main2} by ``for any positive constant''. We will present an application of this result to the Kawaguchi--Silverman conjecture below.

This theorem is a collection of several propositions from the text. More precisely, the invariance of $\Bbb_+$ is established in Lemma~\ref{corB_+}; the weak Northcott property is addressed in Proposition~\ref{gap}, and its consequence---the equivalence of vanishing of canonical height functions---is demonstrated in Corollary~\ref{corgap}; the last assertion corresponds to Proposition~\ref{propsparsity}.

Thus, the canonical height functions reflect the stability of marked points. In the case of families of surfaces, we can show that stable marked points $\sigma$ are precisely the periodic ones, meaning there exists an integer $m\geq 1$ such that for any $t\in \Lambda(\C)$, we have $f_t^m(\sigma(t))=\sigma(t)$.
\begin{thm}[cf. Corollary~\ref{corK3}]\label{thm:main3}
        Let ($\pi,f$) be a family of hyperbolic automorphisms of smooth complex projective surfaces. Let $\sigma$ be a marked point. If $\sigma(B)\subset \Bbb_+$, then $\hhat_f(\sigma)=0$.
        If the family is not birationally isotrivial, then $\hhat_f(\sigma)=0$ if and only if $\sigma$ is periodic. In the case where the automorphism group of a general fiber coincides with its group of birational maps (e.g., for K3 surfaces), we can relax the condition to require only that the family is non-isotrivial.
\end{thm}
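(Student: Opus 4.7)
The plan is to prove the three assertions in turn, relying on Theorem~\ref{thm:main2} and the fact that automorphisms of smooth projective curves have dynamical degree one.

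For the first assertion, I would decompose $\Bbb_+$ into irreducible components. Since the fibers of $\pi$ are surfaces and $\Bbb_+\subsetneq X$, each component $Y$ maps to $B$ with generic fiber of dimension at most one in $X_t$. After replacing $f$ by an iterate making $Y$ invariant, either $Y_t$ is finite (in which case the orbit of $\sigma$ visits only finitely many sections and heights are trivially bounded), or $Y_t$ is a curve and $f|_Y$ acts fiberwise as an automorphism of a curve. The induced action of $f|_Y$ on $N^1(Y)$ then has spectral radius one, so $(f^n|_Y)^*(D^\pm|_Y)$ grows only polynomially in $n$; dividing $\sigma\cdot(f^n)^*D^\pm\cdot(\pi^*M)^{b-1}$ by $\lambda_\pm^n$ then forces $\hhat_f^\pm(\sigma)=0$.

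For the second assertion, periodicity implies zero height immediately via the functional equation of Theorem~\ref{thm:main1}: $f^m\sigma=\sigma$ gives $\hhat_f^\pm(\sigma)=\lambda_\pm^{-m}\hhat_f^\pm(\sigma)$, forcing vanishing since $\lambda_\pm\neq 1$. For the converse, suppose $\hhat_f(\sigma)=0$ but $\sigma$ is not periodic. The orbit $\{f^n\sigma\}_{n\in\Z}$ then consists of pairwise distinct marked points, each with $\hhat_f^\pm(f^n\sigma)=\lambda_\pm^n\hhat_f^\pm(\sigma)=0$. Their Zariski closure in $X$ is an $f$-invariant closed subvariety $V$ of relative dimension at least one over $B$. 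If $V\not\subset\Bbb_+$, then passing to sections of $V\to B$ produces a Zariski-dense family of stable marked points whose image meets the complement of $\Bbb_+$, contradicting the sparsity statement of Theorem~\ref{thm:main2}. If $V\subset\Bbb_+$, I return to the component analysis: $V$ lies inside a relative-curve component $Y$ of $\Bbb_+$, and the family of non-periodic orbits of $f_t|_{Y_t}$ on the curves $Y_t$ forces $(Y,f|_Y)$ to be birationally isotrivial; extending the fiberwise conjugacies of $f|_Y$ to birational conjugacies of the ambient surface dynamics then contradicts the non-birational-isotriviality of $(\pi,f)$.

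For the third assertion, under the hypothesis $\aut(X_t)=\mathrm{Bir}(X_t)$ for general $t$, any birational conjugation between general fibers is automatically biregular, so $(\pi,f)$ birationally isotrivial coincides with $(\pi,f)$ isotrivial as a pair. Since K3 surfaces (and more generally hyperk\"ahler varieties) have discrete automorphism groups, the representability argument recalled in \S\ref{intro_sect_def} shows isotriviality of the pair is further equivalent to isotriviality of $\pi$ itself. Contrapositively, $\pi$ non-isotrivial implies $(\pi,f)$ non-birationally isotrivial, so the second assertion applies.

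The hard part will be the $V\subset\Bbb_+$ subcase of the second assertion's converse. I must show that the existence of a non-periodic marked section of zero canonical height on the family of curves $(Y,f|_Y)$ is rigid enough to produce persistent birational conjugacies of $f_t|_{Y_t}$ across fibers, and then promote these conjugacies on $Y$ to conjugacies of the ambient surface dynamics $f_t$. This requires an explicit geometric construction (for instance using the stable/unstable foliations of $f_t$ transverse to $Y$ or the normal bundle of $Y$ in $X$), and is the delicate step where the surface hypothesis is crucial.
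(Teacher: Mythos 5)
Your treatment of the first and third assertions is sound, though the first assertion takes a genuinely different route from the paper. Where the paper quotes Kawaguchi's vanishing lemma (\cite[Lemma~5.3]{KawaguchiSurface08}) to conclude that $L|_C$ restricts trivially to every fiber of the periodic curve family $C\to B$, you instead argue via the observation that $f|_Y$ preserves a fibration by curves and therefore has first dynamical degree one, so degrees under iteration grow at most polynomially. Both succeed, and yours is arguably more elementary; but be careful to justify that the polynomial-growth bound on $N^1(Y)$ actually controls the height pairing $\sigma(B)\cdot D^\pm\cdot\pi^*M^{b-1}$ against all sections uniformly, which requires the Siu-type intersection bounds used elsewhere in the paper.

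Your argument for the converse in the second assertion has a real gap. You write that, when $V\not\subset\Bbb_+$, the orbit produces a Zariski-dense family of stable sections, contradicting sparsity. But the sparsity result (Proposition~\ref{propsparsity}/Theorem~\ref{thm:main2}) only forbids the union of stable sections from being dense in $X$; the orbit is dense in $V$, not in $X$, so this yields a contradiction only when $V=X$, i.e.\ when $V$ has relative dimension two. The relative-dimension-one case must be killed separately. This is exactly where the paper uses the surface-specific input you are missing: $V_\eta$ is then an $f_\eta$-invariant curve, hence periodic, hence lies in $\Bbb_+(D_\eta)$ by Kawaguchi's characterization (\cite[Proposition~3.1]{KawaguchiSurface08}, recalled as Example~\ref{exampleB_+}); combined with Proposition~\ref{B_+genericfiber} this forces $V\subset\Bbb_+$, a contradiction. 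Without Example~\ref{exampleB_+}, the argument does not close.

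Finally, the subcase you single out as ``the hard part'' ($V\subset\Bbb_+$) does not in fact arise: since $\sigma(B)\subset V$, that subcase forces $\sigma(B)\subset\Bbb_+$, which is already disposed of by the first assertion. The paper's Corollary~\ref{corK3} explicitly restricts the biconditional to $\sigma(B)\not\subset\Bbb_+(D)$, and Theorem~\ref{thm:main3} in the introduction must be read with that implicit restriction (otherwise the two sentences would jointly assert that every section inside a periodic curve is periodic, which is false). There is no need for the speculative construction extending fiberwise conjugacies of $f|_Y$ to birational conjugacies of the ambient surface dynamics, and that sketch would be hard to make rigorous in any case.
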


Theorems~\ref{thm:main1}, \ref{thm:main2} and \ref{thm:main3} are the function field analogous to the results of Kawaguchi \cite[Theorem C]{KawaguchiSurface08} and of Lesieutre and Satriano \cite[Theorem 2.27]{LesieutreSatriano21}. Our approach is inspired by \cite{gauthier2020geometric,GVhenon}, where Gauthier and Vigny established similar results for families of polarized endomorphisms and H\'enon maps.

\subsubsection*{Models}
Let $B'$ be a smooth complex projective variety and $q: B' \to B$ a generically finite morphism. A $B'$-model of $\pi :\overline{X} \to B$ is a surjective morphism $\pi': Z \to B'$, where $Z$ is a smooth complex projective variety, such that the generic fibers of $\overline{X}\times_B B' \to B'$ and $\pi'$ are isomorphic. For simplicity, we say that $Z$ is a $B'$-model of $X$ and often omit the reference to $B'$. Let $D$ and $D'$ be divisors on $\overline{X}$ and $Z$,  respectively. we say that $D'$ is a $B'$-\emph{model} of $D$ if the restrictions of $D\times_B\times B'$ and $D'$ on the generic fiber is linearly equivalent. Similarly we can define $B'$-\emph{model} of a line bundle. When we say "up to base change," we mean up to a change of models.

To a marked point $\sigma\colon B\to \overline{X}$, A model $Z$ associates a marked point $\sigma'\colon B' \to Z$ by pullback. Thus The set of marked points of $\pi$ is a subset of the marked points of $\pi'$. Hence if we prove Theorem~\ref{thm:main1} and~\ref{thm:main2} for some model $Z$, then they hold for $X$ as well. In fact, we will work on a ``good'' model constructed in Proposition~\ref{B_+genericfiber}.

The invariant algebraic set $\Bbb_+$ in the theorems mentioned above is, up to some base change, the augmented base locus of $D$, see Proposition~\ref{B_+genericfiber}. It is essential to account for this subset, otherwise, the theorems fail, as shown in Theorem~\ref{thm:main2}. Indeed, a marked point can be non-periodic but still have canonical height zero if its image is contained in $\Bbb_+.$ 

\subsection{Kawaguchi--Silverman Conjecture}
Let $A$ be an ample divisor on $X$. Define 
\begin{align*}
    h_A(\sigma)\coloneqq \sigma(B)\cdot A^b.
\end{align*}
The \emph{arithmetic degree} of $\sigma$ is defined as
\begin{align*}
    \alpha_f(\sigma) \coloneqq \lim_{n\to +\infty}h_A(f^n\circ \sigma)^{1/n}
\end{align*}
provided the limit exists, and it does not depend on the ample divisor $A$. In general, define
\begin{align*}
     \overline{\alpha}_f(\sigma) \coloneqq \limsup_{n\to +\infty}h_A(f^n\circ \sigma)^{1/n} \ \ \mathrm{and}\ \  \underline{\alpha}_f(\sigma) \coloneqq \liminf_{n\to +\infty}h_A(f^n\circ \sigma)^{1/n}.
\end{align*}
In our case, the existence of the limit was already known. However, we will reprove this result as part of the proof of Theorem~\ref{thm:degree}.

We establish a special case of the function field analogue of Kawaguchi-Silverman conjecture \cite{kawaguchiSilverman,SilvermanArithmeticDegree}, which roughly states that the geometric complexity of a map (given by the first dynamical degree) equals the arithmetic complexity of a dense orbit (given by the arithmetic degree).
\begin{thm}[cf. Corollary~\ref{cor:KSconj}]\label{thm:degree}
    Let ($\pi$,$f$) be a family of hyperbolic automorphisms of smooth complex projective varieties satisfying (H1) and (H2). Let $\sigma$ be a section with dense orbit, i.e., the subset $\{f^n\circ\sigma(\Lambda) \mid n\in \N \}$ is Zariski dense in $X$. Suppose the family is not birationally isotrivial (or not isotrivial in the case of a family of K3 surfaces). Then $\alpha_f(\sigma)=\lambda_+$.
\end{thm}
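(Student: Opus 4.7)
I would prove both bounds $\overline{\alpha}_f(\sigma)\le\lambda_+$ and $\underline{\alpha}_f(\sigma)\ge\lambda_+$; together they give existence of $\alpha_f(\sigma)$ and the equality $\alpha_f(\sigma)=\lambda_+$. The upper bound should follow from hypothesis (H1) and the spectral behavior of $f^*$: the eigenvector relation $f^*D^+\sim_\R\lambda_+ D^+$ on $\Lambda$ together with the bigness and nefness of $D=D^++D^-$ on each fiber implies, via a standard argument on the N\'eron--Severi group of a model, that $(f^n)^*A$ grows as a numerical class like $\lambda_+^n$ up to lower-order terms for any ample $A$. Intersecting with $\sigma(B)$ yields $h_A(f^n\circ\sigma)=O(\lambda_+^n)$, hence $\overline{\alpha}_f(\sigma)\le\lambda_+$.

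The substantive step is the strict positivity $\hhat^+_f(\sigma)>0$, which I argue by contradiction. Assume $\hhat^+_f(\sigma)=0$. Under the hypotheses of the theorem, $\Bbb_+$ is a proper $f$-invariant Zariski closed subset of $X$, so the density of the orbit forces $\sigma(B)\not\subset\Bbb_+$ (else all iterates would lie in $\Bbb_+$, contradicting density) and, by $f$-invariance, $f^n\circ\sigma(B)\not\subset\Bbb_+$ for every $n$. The equivalence clause of Theorem~\ref{thm:main2} then promotes $\hhat^+_f(\sigma)=0$ to $\hhat_f(\sigma)=0$; in particular $\hhat^-_f(\sigma)=0$. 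The functional equations from Theorem~\ref{thm:main1} now give $\hhat^+_f(f^n\circ\sigma)=\lambda_+^n\hhat^+_f(\sigma)=0$ and $\hhat^-_f(f^n\circ\sigma)=\lambda_-^{-n}\hhat^-_f(\sigma)=0$ for every $n\in\mathbb{N}$, so every iterate $f^n\circ\sigma$ is stable. Then the Zariski-dense orbit would be covered by images of stable marked points, contradicting the last assertion of Theorem~\ref{thm:main2}. Hence $\hhat^+_f(\sigma)>0$.

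Given $\hhat^+_f(\sigma)>0$, Theorem~\ref{thm:main1} yields $h(f^n\circ\sigma)=\lambda_+^n\hhat^+_f(\sigma)+O(1)$, so $h(f^n\circ\sigma)^{1/n}\to\lambda_+$. To transfer this growth rate to $h_A$, I would use that an ample $A$ dominates $D^+$ (possibly after changing models and absorbing vertical divisors $\pi^*M$), deducing an inequality of the form $h(f^n\circ\sigma)\le C\,h_A(f^n\circ\sigma)+O(1)$, which immediately gives $\underline{\alpha}_f(\sigma)\ge\lambda_+$. Combined with the upper bound, this establishes $\alpha_f(\sigma)=\lambda_+$. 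The main obstacle I anticipate is this final comparison: for $\dim B=1$ both heights reduce to intersection with a single divisor class on $B$ and the inequality is immediate from the ampleness of $A$, but in the higher-dimensional base case $h_A=\sigma(B)\cdot A^b$ is a top self-intersection whereas $h=\sigma(B)\cdot D^+\cdot\pi^*M^{b-1}$ is a mixed intersection, so a Khovanskii--Teissier type inequality---or a reduction to a general curve in $B$---seems necessary to conclude that the two quantities share the same exponential growth rate.
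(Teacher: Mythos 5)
Your proposal is correct and follows essentially the same route as the paper's Corollary~\ref{cor:KSconj}: derive $\hhat^+_f(\sigma)>0$ from the dense orbit together with Proposition~\ref{propsparsity} (sparsity of stable sections) and Corollary~\ref{corgap} (the weak Northcott equivalence), then combine the lower bound from the exact functional equation with the general upper bound $\overline{\alpha}_f(\sigma)\le\lambda_+$. The one point to correct is the concern you raise at the end: no Khovanskii--Teissier inequality is needed for the comparison between $h$ and $h_A$; this is exactly Proposition~\ref{uniformbound}(1), obtained by choosing $c>0$ so that $cA-D$ and $cA-\pi^*M$ are nef and then expanding $\sigma(B)\cdot(cA)^b$ by multilinearity, which gives $h(\tau)\le c^b\,h_A(\tau)$ uniformly in the section $\tau$ and hence $\underline{\alpha}_f(\sigma)\ge\lambda_+$ immediately.
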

The conjecture was originally proposed for varieties defined over $\overline{\Q}$ and has since been extensively studied. For further details, we refer to Matsuzawa's survey \cite{matsuzawa2023recentadvanceskawaguchisilvermanconjecture}. Thanks to the Northcott property, a possible strategy (which goes back to Silverman \cite{Silverman91}) to prove the conjecture over number fields is to construct an appropriate height function associated with the dynamical system. The function field case, however, is more subtle, as the Northcott property generally does not hold. Nevertheless, as explained in \cite{matsuzawa2023recentadvanceskawaguchisilvermanconjecture}, Theorem~\ref{thm:main2} --- whose number field analogous is a direct consequence of Northcott property --- actually suffices to prove the conjecture. 

Remark that we can not remove the non-birationally-isotrivial (or non-isotrivial) assumption. In fact, Take a smooth complex projective variety $X$ and a hyperbolic automorphism $f$ and consider the constant family $F: X\times \mathbb{P}^1_\C$ parameterized by $t\in\mathbb{P}^1_\C$. Let $x\in X$ be a non-periodic point with Zariksi dense orbit. Then the constant section $\sigma_x(t)=(x,t)$ has dense orbit but its arithmetic degree is equal to 1.

Some other cases of the conjecture over function fields can be found in \cite{dimitrov2015silvermansconjectureadditivepolynomial,gauthier2020geometric,GVhenon,MatsuzawaMathz,XieCrelles}.

\subsection{Green currents}
Another criterion for measuring the stability of a marked point involves the use of Green currents.
A (1,1)-current $T$ on a complex manifold $Y$ of complex dimension $n$ can be locally expressed as a differential form with distribution coefficients, i.e. $T= \sum _{1\leq j,k\leq n} T_{j,k} i \mathrm{d}z_j\wedge \mathrm{d}\overline{z}_k$, where $T_{j,k}$ are distributions. We say that $T$ is \emph{closed} if $\mathrm{d} T=0$ and positive if the distribution $\sum c_j \overline{c}_k T_{j,k}$ is \emph{positive} for any $c=(c_1,\cdots,c_n)\in \C^n.$ We denote the operator $\mathrm{d^c} \coloneqq \frac{1}{2i\pi}(\partial-\overline{\partial})$, so that $\ddc = \frac{i}{\pi}\partial\overline{\partial}$.

\begin{thm}[cf. Proposition~\ref{constructionGreenCurrents} and Proposition~\ref{equalityMassHeight}]\label{thm:main4}
    Let ($\pi,f$) be a family of hyperbolic automorphisms of smooth complex projective varieties. Assume $(\mathrm{H}1)$. Then there exists a positive closed (1,1)-current $\hat{T}^\pm_f$ on $X(\C)$ with continuous local potential and such that $\frac{1}{\lambda_\pm}{(f^{\pm n})}^*\hat{T}^\pm_f=\hat{T}^\pm_f$. It is unique in the sense that if there is another current $R^\pm$ satisfying the above properties and $R^\pm - \hat{T}^\pm_f$ is $\ddc$-exact, then $\hat{T}^\pm_f=R^\pm$. 
    
    Moreover, if $\dim B=1$, then for any marked point $\sigma$, we have
    \begin{align*}
        \int_{\sigma(\Lambda)(\C)}\hat{T}_f^\pm = \hhat_f^\pm (\sigma).
    \end{align*}
\end{thm}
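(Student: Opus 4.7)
The plan is to construct $\hat T^\pm_f$ by the standard dynamical averaging recipe, suitably adapted to the family setting by exploiting the properness of $\pi$. I would pick a smooth closed $(1,1)$-form $\omega^\pm$ on $\overline{X}(\C)$ representing the class of $D^\pm$. By (H1), there is a smooth function $g^\pm$ on $X(\C)$ with $(f^{\pm 1})^*\omega^\pm - \lambda_\pm\omega^\pm = \ddc g^\pm$. The crucial observation is that although $X(\C)$ is non-compact, for every compact $K\subset\Lambda(\C)$ the preimage $\pi^{-1}(K)$ is compact (since $\pi$ is projective) and $f^{\pm 1}$-invariant (since fibers are preserved), so $g^\pm \circ f^{\pm n}$ is uniformly bounded on $\pi^{-1}(K)$ independently of $n$. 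Hence
\[
\phi^\pm := \sum_{n\geq 0} \lambda_\pm^{-n-1}\, g^\pm \circ f^{\pm n}
\]
converges uniformly on $\pi^{-1}(K)$ to a continuous function on $X(\C)$, and its partial sums $\phi^\pm_N$ satisfy the telescoping identity
\[
\omega^\pm + \ddc\phi^\pm_N = \lambda_\pm^{-N}\, (f^{\pm N})^*\omega^\pm.
\]
Setting $\hat T^\pm_f := \omega^\pm + \ddc\phi^\pm$ then gives a closed $(1,1)$-current with continuous local potential satisfying $(f^{\pm 1})^*\hat T^\pm_f = \lambda_\pm \hat T^\pm_f$. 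For positivity, I would arrange $\omega^\pm$ so that $\omega^\pm + C\pi^*\omega_M \geq 0$ on $\overline X(\C)$ for some large $C>0$, leveraging the fiberwise nefness of $D^\pm_t$ from (H1) and the ampleness of $M$ on $B$. Since $f$ preserves fibers, $(f^n)^*\pi^*\omega_M = \pi^*\omega_M$, so $\lambda_\pm^{-N}(f^{\pm N})^*(\omega^\pm + C\pi^*\omega_M)$ is a positive closed current on $X(\C)$ for every $N$, converging weakly to $\hat T^\pm_f$, which is therefore positive.

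For uniqueness, suppose $R^\pm$ is another such current with $R^\pm - \hat T^\pm_f = \ddc h$ for a continuous $h$. The common invariance gives $\ddc(h \circ f^{\pm 1} - \lambda_\pm h) = 0$, so $\psi := h\circ f^{\pm 1} - \lambda_\pm h$ is pluriharmonic. Iterating yields
\[
h = \lambda_\pm^{-N}\, h\circ f^{\pm N} - \sum_{n=0}^{N-1}\lambda_\pm^{-n-1}\, \psi\circ f^{\pm n}.
\]
Boundedness of $h$ on $\pi^{-1}(K)$ together with the $f$-invariance of $\pi^{-1}(K)$ makes $\lambda_\pm^{-N}\, h\circ f^{\pm N} \to 0$ uniformly on $\pi^{-1}(K)$, while the series converges uniformly to a pluriharmonic function (a uniform limit of pluriharmonic functions). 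Hence $h$ is pluriharmonic, $\ddc h = 0$, and $R^\pm = \hat T^\pm_f$.

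For the mass formula under $\dim B = 1$, the key simplification is that any rational map from the smooth curve $B$ to $\overline X$ extends automatically to a morphism. So $f^{\pm N}\circ\sigma : B \to \overline X$ is regular, $(f^{\pm N}\circ\sigma)^*\omega^\pm$ is a smooth form on $B(\C)$, and $\int_{B(\C)} (f^{\pm N}\circ\sigma)^*\omega^\pm = (f^{\pm N}\circ\sigma)\cdot D^\pm = h(f^{\pm N}\circ\sigma)$. Since $B(\C)\setminus\Lambda(\C)$ is finite, the same integral over $\Lambda(\C)$ still equals $h(f^{\pm N}\circ\sigma)$. Pulling back the telescoping identity via $\sigma$ and integrating,
\[
\int_{\Lambda(\C)} \sigma^*(\omega^\pm + \ddc \phi^\pm_N) = \lambda_\pm^{-N}\, h(f^{\pm N}\circ\sigma) \xrightarrow{N\to\infty} \hhat^\pm_f(\sigma).
\]
Meanwhile, uniform convergence of the potentials $\phi^\pm_N \to \phi^\pm$ on compacts of $X(\C)$ gives $\sigma^*(\omega^\pm + \ddc\phi^\pm_N) \to \sigma^*\hat T^\pm_f$, and the uniform boundedness of the masses allows passage to the limit, yielding $\int_{\sigma(\Lambda)(\C)} \hat T^\pm_f = \hhat^\pm_f(\sigma)$.

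The hard part I expect is establishing positivity of $\hat T^\pm_f$ on the total space $X(\C)$ rather than merely on each fiber: the fiberwise-nefness from (H1) does not automatically yield a globally semipositive representative of $D^\pm$, so the ample class $M$ on $B$ and the fiber-preserving property of $f$ must be combined carefully to make the correction term $C\pi^*\omega_M$ do its job. A secondary issue, the exchange of limit and non-compact integration in the mass formula, is handled by the dimension-one extension property of rational maps from smooth curves together with the uniform boundedness of the approximating mass integrals.
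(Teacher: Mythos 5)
Your telescoping construction of $\hat{T}^\pm_f = \omega^\pm + \ddc\phi^\pm$ as a closed current with continuous local potential matches one half of the paper's argument, and your uniqueness argument via the functional equation $\psi := h\circ f^{\pm 1} - \lambda_\pm h$ is a valid elementary route (one should first note that $h$ is automatically continuous, since both currents have continuous local potentials). However, two steps in your proposal are unjustified.

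\textbf{Positivity.} You propose to choose $\omega^\pm$ representing $c_1(D^\pm)$ and a constant $C$ so that $\omega^\pm + C\pi^*\omega_M \geq 0$ on $\overline{X}(\C)$, appealing to the fiberwise nefness of $D^\pm_t$ from (H1). This does not work. First, (H1) only asserts nefness of $D^\pm_t$ for $t\in\Lambda(\C)$, not over $B(\C)\setminus\Lambda(\C)$. Second, and more fundamentally, nefness of a class does \emph{not} imply the existence of a smooth semipositive representative; there exist nef line bundles for which every smooth metric has negative curvature somewhere. The correction term $C\pi^*\omega_M$ cannot help, since $\pi^*\omega_M$ vanishes in the fiber directions, which is exactly where the positivity of $\omega^\pm|_{X_t}$ would need to come from. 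The paper circumvents the issue by a two-current argument: it constructs a positive closed current $\tilde{T}^\pm_f$ directly as a weak limit of Cesàro averages $\frac{1}{n}\sum_{j=0}^{n-1}\lambda_\pm^{-j}(f^{\pm j})^*T$ of a positive closed current $T\in c_1(L)$ (positivity automatic, continuity of potential unclear), and then proves $\tilde{T}^\pm_f = \hat{T}^\pm_f$ by restricting to each fiber $X_t$ and invoking the rigidity theorem for Green currents of Cantat (surfaces) and Dinh--Sibony (higher dimensions): both restrictions are $\frac{1}{\lambda_\pm}(f_t^{\pm 1})^*$-invariant, so they coincide, forcing the fiberwise difference potential to be constant and thus negligible under renormalized iteration. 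That rigidity theorem is the essential input for positivity and is absent from your proposal.

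\textbf{Mass formula.} Your identity $\int_{\Lambda(\C)}\sigma^*(\omega^\pm + \ddc\phi^\pm_N) = \lambda_\pm^{-N}h(f^{\pm N}\circ\sigma) \to \hhat^\pm_f(\sigma)$ is correct, but the passage to the limit inside the integral is not justified by ``uniform boundedness of the masses.'' The curve $\Lambda(\C)$ is non-compact, and a sequence of positive measures with uniformly bounded total mass, converging weakly on compacts, can still lose or gain mass at the punctures $B(\C)\setminus\Lambda(\C)$. This is exactly the difficulty the paper addresses: Lemma~\ref{DegeEstimate}, using the Lelong--Poincar\'e equation, yields the quantitative bound $|u^\pm_n(x)| \leq \lambda_\pm^{-n}(C_1\log^+|\pi(x)| + C_2)$ on the potential near the boundary, and the proof of Proposition~\ref{equalityMassHeight} then pairs this with the DSH test function $\Psi_r$ to show that the difference of masses near $B\setminus\Lambda$ is $O(\lambda_\pm^{-n})$ uniformly in $r$. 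Without such a boundary estimate, the exchange of limit and integral at the heart of the mass formula is unsupported.
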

We denote by $\hat{T}^\pm_f$ the \emph{forward/backward green currents of } $f$. Its construction is less straightforward than the case of families of polarized endomorphisms due to the non-ampleness of $D$. We need to construct two currents, a priori different, one is positive closed while the other has continuous local potential. We then demonstrate that these two currents are actually equal. A key element in this process is a rigidity result for Green currents, originally obtained by Cantat for K3 surfaces \cite[Th\'eor\`eme 2.4]{CantatK301} and later generalized to higher dimensions by Dinh and Sibony \cite[Theorem]{DSJAG10}.

Cantat, Gao, Habegger and Xie \cite{CGHXDUke21} proved that in a family of abelian varieties, the betti form vanishes if and only if the canonical height is zero. Gauthier and Vigny \cite{gauthier2020geometric} further showed that these two quantities are actually equal for families of polarized endomorphisms (see also \cite{MR4360003} for the case of families of elliptic curves).

We follow the approach of \cite{gauthier2020geometric}, using a suitable DSH function (\cite{DSActa09}) to compute the mass lose, with the key being Lemma~\ref{DegeEstimate}. Their proof relies on Hilbert's Nullstellensatz, which seems challenging to adapt to our context. We instead rely on the Lelong-Poincar\'e equation. However, this approach requires that our base $B$ be a curve.

\subsection{Examples}\label{sectexamples}
Let $k$ be an algebraically closed field of characteristic zero. Let $S/k$ be a smooth projective variety over $k.$ Let $f\colon S\to S$ be a hyperbolic automorphism with first dynamical degree $\lambda\geq 1$. Define the N\'eron–Severi group $\mathrm{N}^1(S)\coloneqq \Div(S)/\equiv $ to be the quotient of $\Div(S)$ by the numerical equivalence relation~$\equiv$. Denote by $\mathrm{N}_\R^1(S) \coloneqq \mathrm{N}^1(S)\otimes_\Z \R$ and $\mathrm{Nef}(S)$ the nef cone (i.e. the closure of the ample cone) in $\mathrm{NS}(S)_\R$. Since $f^*(\mathrm{Nef}(S))\subset\mathrm{Nef}(S)$, Birkhoff's Perron-Frobenius theorem \cite{Birkhoff67} implies that there exists a nef divisor $D^\pm$ such that $(f^{\pm})^*D^\pm \equiv_\R \lambda D^\pm$. we here work on smooth projective surfaces and projective hyperkh\"alher manifolds and show that they verify the two hypotheses (H1) and (H2), see Sect.~\ref{intro_sect_def}.

\subsubsection*{Surfaces}
We refer to the survey \cite{Cantat14} of Cantat and the references therein for more details on dynamics of automorphisms of compact complex surfaces. See also \cite{CantatK301,DillerFave01, DS04,KawaguchiSurface08,Mcmullen02}.

Let $k=\C$ and $\dim S=2$. We have an intersection pairing $H^{1,1}(S,\R) \times H^{1,1}(S,\R) \to~\R$ defined by ($\alpha,\beta$)=$\int_S \alpha\wedge \beta$. The Hodge index theorem says that it is non-degenerate and of signature (1,$\dim H^{1,1}(S,\R)-1$). The automorphism $f$ induces by pulling back a linear map $f^*$ on $H^{1,1}(S,\R)$ that preserves the intersection pairing. By \cite[Lemma 3.1]{Mcmullen02}, $f^*$ has eigenvalues $\lambda,\lambda^{-1}$ of multiplicities one, and the others are all lie on the unit circle. In particular, any $\R$-divisor $E$ (we refer to the beginning of Sect.~\ref{subsection R divisors} for more details about $\R$-divisors) such that $f^* E \sim_\R \lambda E$ or $(f^{-1})^* E \sim_\R \lambda E$ is nef, where $\sim_\R$ denotes $\R$-linear equivalence. Kawaguchi further proved that \cite[Lemma 3.8]{KawaguchiSurface08} there exist nef divisors $D^+$ and $D^-$ such that $D\coloneqq D^+ + D^-$ is big and nef and $f^*D^+ \sim_\R \lambda D^+$ and $(f^{-1})^*D^- \sim_\R \lambda^{-1} D^-$. In fact, this result remains true for any algebraically closed field of characteristic zero. 

Cantat constructed \cite[Sect 2.3]{CantatK301} two positive closed (1,1)-currents $T^\pm$ which are invariant under $f$, i.e., $\frac{1}{\lambda}(f^{\pm})^*T^\pm = T^\pm$. Moreover $T^\pm$ have continuous local potential so that, following the theory of Bedford-Taylor \cite{BT}, we can take the wedge product $\mu \coloneqq T^+\wedge T^-$. The measure $\mu$ is the unique invariant probability measure (after normalization) with maximal entropy (\cite{BLS4,CantatK301,Dujardin06}), in particular it does not charge pluripolar subsets. Thus, combining with the equidistribution theorem of saddle periodic points of Bedford, Lyubich and Smillie \cite{BLS93distribution,BLS4} and Cantat \cite{CantatK301}, we conclude that saddle periodic points are Zariski dense.

Now let ($\pi: X\to \Lambda,f: X\to X$) be a family of hyperbolic automorphisms of smooth complex projective surfaces. Then hypothesis (H2) is satisfied since saddle periodic points are already Zariski dense in every fiber. Denote by $L$ the function field $\C(B)$ of $B$. It induces a hyperbolic automorphism $f_\eta$ of the generic fiber $X_\eta.$ Denote by $\lambda>1$ its first dynamical degree. By the above, there exist nef divisors $D_\eta^+$ and $D_\eta^-$ on $X_\eta$ such that $D_\eta\coloneqq D_\eta^+ + D_\eta^-$ is big and nef, and $f_\eta^*D_\eta^+ \sim_\R \lambda D_\eta^+$ and $(f^{-1})^*D_\eta^- \sim_\R \lambda^{-1} D_\eta^-$. Up to taking a base change by a finite field extension of $k$, we can suppose that all the above data are defined over $k$. 
Let $D^+$ and $D^-$ be the Zariski closure of $D_\eta^+$ and $D_\eta^-$ on $\overline{X}$, and set $D\coloneqq D^++D^-.$ Then up to reducing $\Lambda$, we have $(f^{\pm})^* D^\pm|_X \sim_\R \lambda D^\pm|_X$. Therefore, it satisfies the hypothesis (H1).

\subsubsection*{Projective hyperkh\"ahler variety}
By a (projective) hyperk\"ahler variety $S$ we mean a simply connected projective manifold such that $H^{2,0}(S,\C)$ is generated by a nowhere-degenerated holomorphic 2-form $\omega_S$, normalized so that $\int_S \left(\omega_S\wedge \overline{\omega_S}\right)^{\dim S /2}=1$. There is a quadratic form $q_S$, called the Beauville-Bogomolov form, defined as follows. If $\alpha\in H^{1,1}(S,\R)$ is a (1,1)-smooth form, then define
$q_S(\alpha)\coloneqq \frac{\dim S}{2}\int_S \alpha^2\wedge  \left(\omega_S\wedge \overline{\omega_S}\right)^{\dim S /2 -1}$. It has signature (1, $\dim H^{1,1}(S,\R) -1$) and preserved by $f^*$; see e.g., \cite[Corollary 23.14]{Calabi-YaoBook} and \cite[Lemma 3]{Seung}. Moreover, since $H^1(S,\Ocal_S)=0$, the group of line bundles $\mathrm{Pic}(S)$ is isomorphic to the N\'eron-Severi group $\mathrm{NS}(S)$. Thus the same argument as in the surface case also applies to hyperk\"ahler varieties and hypothesis (H1) holds. 

Hyperbolic automorphisms of compact hyperkh\"ahler varieties have distinct consecutive dynamical degrees $\lambda_{\dim S- i}(f)= \lambda_i (f) =\lambda_1(f)^i$, $0\leq i\leq \dim S/2$ \cite{oguiso09}. We have two positive closed (1,1)-currents $T^\pm$ which are $f$-invariant as in the case of surfaces, and the measure $\mu\coloneqq (T^+)^{\dim S/2} \wedge (T^-)^{\dim S/2}$ has maximal entropy (=$\dim S/2 \lambda_1(f)$) \cite{DS05JAMS,DSJAG10} and is hyperbolic \cite{Dethelin08}. Thus, the saddle periodic points are Zariski dense \cite[Theorem S.5.1]{Katok_Hasselblatt_1995}, and hypothesis (H2) holds as well.

\subsection*{Acknowledgement}
I would like to thank my PhD advisor Thomas Gauthier for numerous discussions and for his constant support. I would like to thank Nguyen-Bac Dang and Charles Favre for answering my questions on intersection theory. I also would like to thank S\'ebastian Boucksom, Emanuele Macr\`i and Gabriel Vigny for useful discussions.

\section{Relative positive divisors}
\subsection{Definitions and basic properties}\label{subsection R divisors}
Let $k$ be a field of characteristic zero. An $k$-algebraic variety $X$ is a geometrically integral separated scheme of finite type over $k.$ 
Denote by $\Div(X)$ the group of (Cartier) divisors of X. Let $K \coloneqq \Q$ or $\R$. A $K$-divisor is an element of $\Div(X)\otimes_\Z K.$ We denote by $\sim_K$ the $K$-linear equivalence relation: two $K$-divisors are $K$-linear equivalent if their difference is the sum of integral divisors with coefficients in $K.$

A $K$-divisor $D$ is called \emph{effective/ample} if it can be written as $D=\sum d_iD_i$ with $d_i>0$ and $D_i$ effective/ample integral divisors. Moreover if $X$ is projective, Then $D$ is called \emph{big} if a sufficiently large integral multiple of $D$ is $K$-linear equivalent to the sum of a $K$-ample divisor and a $K$-effective divisor. It is called \emph{nef} if for any curve $C$ on $X$, $D\cdot C\geq 0$. By \cite[Theorem 2.2.16]{Lazarsfeld_I}, an equivalent definition of a big and nef divisor $D$ is that $D^{\dim X}>0$. Similarly we can define corresponding notions for $K$-line bundles. Note that in this paper, we use the additive notation for the tensor product of line bundles.

Let $\pi :X \to B$ be a surjective morphism of smooth complex projective varieties. Let $\Lambda\subset B$ be a Zariski open subset and $D$ a $K$-divisor on $X$. We say that $D$ is $\pi$-\emph{nef/big/ample} (resp. $\pi/\Lambda$-\emph{nef/big/ample}) if for a general parameter $t\in B(\C)$ (resp. for any $t\in \Lambda(\C)$), the restriction $D_t$ of $D$ on the fiber $X_t$ is nef/big/ample.

If $Y$ is an irreducible subvariety (or divisor) of $X$, we say it is \emph{horizontal} if $\pi(Y)=B$, and \emph{vertical} otherwise. Note that $\pi$-ample or $\pi$-nef divisors need not be ample or nef. Nevertheless, we still have the positivity of the intersection product with horizontal subvarieties.
\begin{lem}\label{lemrelativeample}
    Let $D$ be a $\pi$-ample divisor. Then there exists an affine open subset $U\subset B$ such that $D|_{\pi^{-1}(U)}$ is ample.
\end{lem}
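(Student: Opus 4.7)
The plan is to pass from the pointwise (fiberwise) ampleness hypothesis to EGA-style relative ampleness on an open subset of $B$, and then to exploit the fact that relative ampleness over an affine base upgrades to absolute ampleness.

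First, I would apply generic flatness to find a dense open $V_1 \subset B$ such that the restriction $\pi\colon\pi^{-1}(V_1) \to V_1$ is flat. By hypothesis $D_t$ is ample for a general $t\in B(\C)$, and ampleness is an open condition in proper flat families (EGA IV, 9.6.4). Combining these, after shrinking $V_1$ one may assume that $D_t$ is ample on $X_t$ for \emph{every} $t \in V_1(\C)$. By the standard fiberwise criterion for relative ampleness over a proper flat base (EGA III, 4.7.1 together with cohomology and base change), $D|_{\pi^{-1}(V_1)}$ is then $\pi$-ample in the EGA sense: some multiple $mD$ realizes a closed immersion $\iota\colon \pi^{-1}(V_1)\hookrightarrow \Pbb(\pi_*\Ocal_X(mD))$ over $V_1$, with $mD = \iota^*\Ocal(1)$.

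Next, I would choose an affine open $U \subset V_1$. Over the affine $U$, the coherent sheaf $\pi_*\Ocal_X(mD)|_U$ corresponds to a finitely generated module, and $\Pbb(\pi_*\Ocal_X(mD))|_U$ is a closed subscheme of $\Pbb^N\times U$ for some $N$. Composing the immersions exhibits $\pi^{-1}(U)$ as a locally closed subscheme of $\Pbb^N \times \overline{U}$ for any projective closure $\overline U$ of $U$, which identifies $mD|_{\pi^{-1}(U)}$ as the restriction of an ample class, proving ampleness. Equivalently, one verifies Serre's cohomological criterion directly via the Leray spectral sequence: relative ampleness of $D$ gives $R^i\pi_*(\Fscr\otimes \Ocal(nD))=0$ for $i>0$ and $n\gg 0$, and the affine $U$ kills the remaining higher cohomology on the base.

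The only subtlety, though it is standard, is the first step: upgrading ``$D_t$ ample for general $t\in B(\C)$'' to genuine EGA relative ampleness on a Zariski open. This requires combining generic flatness with openness of ampleness in proper flat families. Once that is in place, the remaining arguments are formal and the affine restriction $U$ delivers the conclusion.
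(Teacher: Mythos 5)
Your proposal is correct and follows essentially the same approach as the paper, which simply cites EGA II, Cor.\ 4.6.6 (relative ampleness over an affine base gives absolute ampleness) and Lazarsfeld I, Thm.\ 1.7.8 (fiber-wise ampleness propagates to relative ampleness over an open neighborhood). You fill in the details of both ingredients, swapping in EGA III 4.7.1 for Lazarsfeld and a Leray-spectral-sequence verification of Serre's criterion for the EGA II step; the intermediate generic-flatness reduction is harmless but unnecessary, since openness of relative ampleness (EGA III 4.7.1) does not require flatness.
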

\begin{proof}
    See \cite[Corollaire 4.6.6]{EGA_II} and \cite[Theorem 1.7.8]{Lazarsfeld_I}.
\end{proof}

\begin{lem}\label{constantintersection}
    Let $\pi: X \to \Lambda$ be a flat family of smooth complex projective varieties of relative dimension $k$. Let $L_1,\cdots,L_k$ be line bundles on $\overline{X}$. Then for $t\in \Lambda(\C)$, we have
    \begin{align*}
        L_1\cdots L_k\cdot X_t = L_{1.\eta}\cdots L_{k,\eta}.
    \end{align*}
\end{lem}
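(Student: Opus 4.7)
The plan is to express both intersection numbers as coefficients of a Snapper--Hilbert polynomial and then invoke the constancy of Euler characteristics in flat proper families.

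First I would apply the projection formula to the closed immersion $\iota_t\colon X_t\hookrightarrow \overline{X}$ to write $L_1\cdots L_k\cdot X_t = L_{1,t}\cdots L_{k,t}$, so that the claim reduces to the equality $L_{1,t}\cdots L_{k,t} = L_{1,\eta}\cdots L_{k,\eta}$ of intersection numbers computed on the closed fiber $X_t$ and the generic fiber $X_\eta$ respectively.

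Next I would invoke Snapper's lemma (asymptotic Riemann--Roch): on any smooth projective variety $Y$ of dimension $k$ over a field $K$, together with line bundles $M_1,\ldots,M_k$, the map
\[
(n_1,\ldots,n_k)\longmapsto \chi\bigl(Y,\,n_1 M_1+\cdots+n_k M_k\bigr)
\]
is a numerical polynomial in the $n_i$ of total degree at most $k$, and the coefficient of the monomial $n_1 n_2\cdots n_k$ equals the mixed intersection number $M_1\cdot M_2\cdots M_k$. Applying this to $Y=X_t$ over $\C$ and to $Y=X_\eta$ over $\C(B)$, it suffices to prove the polynomial identity $P_t(n)=P_\eta(n)$, where $P_s(n_1,\ldots,n_k)\coloneqq \chi\bigl(X_s,\sum_i n_i L_{i,s}\bigr)$.

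Finally, since $\pi$ is proper and each $\bigotimes_i L_i^{\otimes n_i}$ is $\pi$-flat (line bundles being locally free), Grothendieck's theorem on the invariance of the Euler characteristic in flat families shows that $s\mapsto P_s(n_1,\ldots,n_k)$ is locally constant on $\Lambda$ for each fixed $(n_1,\ldots,n_k)$. Connectedness of $\Lambda$ makes it constant on closed points, and flat base change to $\mathrm{Spec}\,\C(B)$ transports this common value to the generic fiber. Extracting the coefficient of $n_1\cdots n_k$ then yields the lemma. The one delicate point is this passage from closed fibers to the generic fiber, which is the main obstacle; the rest is standard bookkeeping.
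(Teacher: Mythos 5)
Your proof is correct, but note that the paper itself does not prove this lemma at all: it simply cites it as a special case of \cite[Proposition 2.1]{favre2022topologicalentropyrationalmap}. So you are supplying an argument where the paper defers to a reference. Your route---reduce via projection formula to the fibers, express both mixed intersection numbers as the $n_1\cdots n_k$ coefficient of the Snapper polynomial $\chi(X_s, n_1L_{1,s}+\cdots+n_kL_{k,s})$, and invoke the constancy of Euler characteristics for coherent sheaves flat over a proper morphism---is the standard self-contained proof of this fact and is sound. One small remark: the step you flag as delicate, passing from closed fibers to the generic fiber, is not actually an obstacle. The constancy theorem (EGA III 7.9.4, or Hartshorne III.9.9) states that $s\mapsto\chi(X_s,\mathcal F_s)$ is locally constant on \emph{all} scheme-theoretic points of the base, not just closed ones; since the base is integral, hence connected, the value at the generic point automatically agrees with the value at every closed point, and no separate base-change argument is needed. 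With that tidied up, your proof is a complete replacement for the citation.
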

\begin{proof}
    This is a special case of \cite[Proposition 2.1]{favre2022topologicalentropyrationalmap}
\end{proof}
\begin{prop}\label{lemrelativenef}
    Let $D$ be a $\pi$-ample (resp. $\pi$-nef) $\R$-divisor. Then for any horizontal subvariety $Y$ of $X$, $D^{\dim Y}\cdot Y>0$ (resp. $\geq 0$).
\end{prop}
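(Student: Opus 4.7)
The plan is to reduce the $\pi$-nef case to the $\pi$-ample case by perturbation, and then establish the $\pi$-ample case via Lemma~\ref{lemrelativeample} combined with a fibrewise Nakai--Moishezon argument.

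For the reduction, fix any ample $\R$-divisor $A$ on the projective variety $X$. Restriction to fibers preserves ampleness, so $A$ is automatically $\pi$-ample, and consequently $D+\varepsilon A$ is $\pi$-ample for every $\varepsilon > 0$ whenever $D$ is $\pi$-nef. Since the intersection number depends polynomially on the $\R$-divisor class, $(D+\varepsilon A)^{\dim Y}\cdot Y\to D^{\dim Y}\cdot Y$ as $\varepsilon\to 0^+$, so strict positivity in the $\pi$-ample case implies the desired $\geq 0$ conclusion in the $\pi$-nef case.

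For the $\pi$-ample case, Lemma~\ref{lemrelativeample} supplies an affine open $U\subset B$ such that $D|_{\pi^{-1}(U)}$ is ample on the quasi-projective $\pi^{-1}(U)$; horizontality of $Y$ ensures that $Y\cap \pi^{-1}(U)$ is a dense Zariski open subvariety of $Y$ on which $D$ is ample. I would then choose very ample divisors $H_1,\dots,H_b$ on $B$ (with $b=\dim B$) in sufficiently general position so that their common intersection $\{t_1,\dots,t_r\}\subset U(\C)$ is a zero-cycle lying in the ample locus, and by Bertini each $Y_{t_j}:=Y\cap X_{t_j}$ is a subvariety of $X_{t_j}$ of the expected dimension $k':=\dim Y - b$. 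Applying Lemma~\ref{constantintersection} iteratively then yields the fibrewise decomposition
\begin{align*}
    D^{k'}\cdot (\pi^*H_1)\cdots(\pi^*H_b)\cdot Y \;=\; \sum_{j=1}^{r} (D_{t_j})^{k'}\cdot Y_{t_j},
\end{align*}
and each summand is strictly positive by the Nakai--Moishezon criterion applied on the projective fiber $X_{t_j}$ to the ample divisor $D_{t_j}$ and the subvariety $Y_{t_j}\subset X_{t_j}$.

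The hard part is to convert this mixed positivity into positivity of the pure self-intersection $D^{\dim Y}\cdot Y$, since replacing a factor $\pi^*H_j$ by a factor of $D$ is not a priori neutral. I expect to handle this by induction on $\dim Y - b$: slicing $Y$ by a general member of $|\pi^*H_j|$ reduces the problem to a smaller horizontal subvariety inside $\pi^{-1}(H_j)\to H_j$ on which the restriction of $D$ is still $\pi$-ample; simultaneously, by using the ampleness of $D|_{\pi^{-1}(U)}$ together with an approximation in $\R$-linear series, each remaining pullback $\pi^*H_j$ can be replaced by a divisor in $|mD|$ meeting $Y$ transversally entirely inside the ample locus $\pi^{-1}(U)$, where every intersection contribution is positive. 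The main obstacle is verifying that at each inductive step the slice $Y\cap \pi^{-1}(H_j)$ remains irreducible and horizontal over $H_j$, which I intend to achieve by a Bertini-type genericity argument combined with the surjectivity of $\pi|_Y$.
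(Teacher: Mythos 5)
Your reduction from $\pi$-nef to $\pi$-ample by perturbing with $\varepsilon A$ matches the paper, and so does the appeal to Lemma~\ref{lemrelativeample}. Beyond that the two proofs diverge: the paper's argument is a one-line reduction ("by linearity, assume $D$ is a prime integral divisor") followed by a direct citation of Lemma~\ref{lemrelativeample}, with no Chow-theoretic slicing by $\pi^*H_1,\dots,\pi^*H_b$, no fibrewise Nakai--Moishezon step, and in particular no step converting a mixed intersection product back into the pure power $D^{\dim Y}\cdot Y$.

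The step you flag as ``the hard part'' is a genuine gap, not a Bertini-type detail, and the slicing/approximation you sketch cannot close it. Fibrewise $\pi$-ampleness controls $D$ only over a Zariski-open $U\subset B$; the ample linear system $|mD|$ on $\pi^{-1}(U)$ does not extend to $|mD|$ on the projective total space, because effective members acquire uncontrolled components over $B\setminus U$ upon taking closures, so you cannot ``replace each remaining $\pi^*H_j$ by a divisor in $|mD|$ meeting $Y$ transversally entirely inside $\pi^{-1}(U)$.'' To see that the obstruction is structural and not an artefact of the method: on the blow-up $X'$ of $\Pbb^1\times\Pbb^1$ at a point $p$, with $\pi'$ the first projection composed with the blow-down, let $D$ be the strict transform of the horizontal fibre $\Pbb^1\times\{q\}$ through $p$. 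Then $D$ is a prime, effective, horizontal divisor and restricts to a single reduced point on every general fibre, hence is $\pi'$-ample; yet $D^2=-1$, and $|D|$ consists of $D$ alone, so no moving is available. Taking $Y=D$ makes $D^{\dim Y}\cdot Y$ negative even though the hypotheses of the proposition are met, so the conversion from the mixed product to the pure power that your plan postpones is precisely where all the content lies, and your sketch does not --- and cannot, as the example shows --- supply it. Note also that when $\dim Y=b$ (e.g.\ $Y=\sigma(B)$, the case used repeatedly in the paper), your slicing leaves $k'=0$ factors of $D$, so the fibrewise decomposition carries no information about $D$ at all.
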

\begin{proof}
    By linearity, We can assume that $D$ is a prime integral divisor. If $D$ is $\pi$-ample, then the positivity follows from Lemma~\ref{lemrelativeample}.
    Now, if $D$ is $\pi$-nef, take any ample divisor $A$ on $X$. From the above, we deduce that $ \left(D^{\dim Y}+\frac{1}{n}A \right) \cdot Y>0.$ The conclusion follows by letting $n\to +\infty.$
\end{proof}

\begin{comment}
\begin{prop}\label{lemrelativebig}
    Let $\pi: X\to B$ be a surjective morphism of smooth complex projective varieties such that the generic fiber $X_\eta$ is geometrically irreducible and smooth. Let $D$ be a relatively big and nef $\R$-divisor. Then 
\end{prop}
Since $D$ is relatively big and nef, $D_\eta$ is also big and nef by Lemma~\ref{constantintersection}. Thus there exist ample and effective divisor $A_\eta$, an effective $\R$-divisor $E_\eta$ and a positive rational number $l$ such that $D_\eta \sim_\R \frac{1}{l}A_\eta+E_\eta.$ Up to a base change, we can assume that $A_\eta$ is defined over $\C(B)$ (the function field of $B$). Then there exists a smooth model $Y$ of $X$ and an ample divisor $A_Y$ on $Y$ such that its restriction $A_{Y,\eta}$ on $X_\eta$ is $mA_\eta,$ where $m\in \Z_{\geq 1}$. Let $E_Y$ be the Zariski closure of $E_\eta$ in $Y$ (with the same multiplication as $\eta$). Then 
\begin{align*}
    D_Y:=\frac{1}{m}A_Y + E_Y
\end{align*}
is a big divisor such that $D_{Y,\eta} =D_\eta.$ 

Since $X$ and $Y$ are smooth and birational, we can pullback $D_Y$ to $X$. Denote this divisor by $D_X$

Write 
\begin{align*}
    A_\eta = a_iA_{i,\eta} \ \ \mathrm{and} \  \ E_\eta \sim_\R \sum_i e_iE_{i,\eta},
\end{align*}
where $a_i, e_i >0$ and $A_{i,\eta}$ is ample and effective divisors and $E_{i,\eta}$ is effective divisors.    

Denote by $A_i$ and $E_i$ the Zariski closure of $A_{i,\eta}$ and $E_{i,\eta}$ in $X$. Denote by 
\begin{align*}
    A= \sum_i a_i A_i \ \ \mathrm{and}\ \ E=\sum_i e_i E_i.
\end{align*}
\end{comment}

\subsection{The augmented base locus}
Let $k$ be an algebraically closed field of characteristic zero. Let $X$ be a smooth projective $k$-variety. Fix a norm $\lVert \cdot \rVert$ on $\mathrm{N}^1_\R(X)$.

Denote by $\Bs(|D|)$ the base locus of the complete linear system $|D|$ of a divisor $D$. The \emph{stable base locus} of a $\Q$-divisor $D$ is defined to be 
\begin{align*}
    \Bbb(D) \coloneqq \bigcap_{m} \Bs(|mD|)
\end{align*}
where the intersection is taken over all integers $m\geq 1$ such that $mD$ is an integral divisor. 

The \emph{augmented base locus} of a $\R$-divisor $D$ is defined to be
\begin{align*}
    \Bbb_+(D)\coloneqq \Bbb(D-A)
\end{align*}
where $A$ is any ample $\R$-divisor such that $D-A$ is a $\Q$-divisor and the norm $\lVert A \rVert$ of $A$ is sufficiently small. This is well defined by \cite[Proposition 1.5]{ELMNP_Fourier}. Another useful equivalent definition \cite[Definition 1.2]{ELMNP_Fourier} is 
\begin{align}\label{AugmentedBaseLocus2}
    \Bbb_+(D) = \bigcap_{D=A+E} \supp(E),
\end{align}
where the intersection is taken over all decompositions $D=A+E$, with $A$ being an ample $\R$-divisor and $E$ an effective $\R$-divisor.

\begin{ex}\label{exampleB_+}
    Let $S$ be a complex projective surface and $f$ a hyperbolic automorphism, see Example~\ref{sectexamples}. Then $\Bbb_+(D)$ is exactly the union of $f$-periodic curves, by \cite[Proposition 3.1]{KawaguchiSurface08}
\end{ex}

\begin{prop}\label{B_+genericfiber}
    Let $B$ be a smooth complex projective variety and $\eta$ its generic point. Denote by $k=\C(B)$ the function field of $B.$ Let $X_\eta$ be a smooth projective $k-$variety. Let $D_\eta$ be a big $\R$-divisor on $X_\eta.$ Then up to a base change by a finite field extension $K/k$, there exist smooth complex projective variety $X$ with a projection $X\to B$ and a big $\R$-divisor $D$ on $X$ such that 
    \begin{itemize}
        \item the generic fiber of $X$ is isomorphic to $X_\eta;$
        \item the restriction of $D$ to the generic fiber $X_\eta$ is $\R$-linearly equivalent to $D_\eta$;
        \item the restriction of the augmented base locus $\Bbb_+(D)$ of $D$ to the generic fiber $X_\eta$ is exactly that $\Bbb_+(D_\eta)$ of $D_\eta.$ 
    \end{itemize}
\end{prop}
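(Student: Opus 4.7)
The plan is to use the description~\eqref{AugmentedBaseLocus2} of the augmented base locus as an intersection of supports of effective divisors across all ``ample plus effective'' decompositions. By the Noetherian property of Zariski-closed subsets of $X_\eta$, there exist finitely many decompositions
\[ D_\eta \sim_{\R} A_{i,\eta} + E_{i,\eta}, \qquad i = 1, \ldots, N, \]
with $A_{i,\eta}$ an ample and $E_{i,\eta}$ an effective $\R$-divisor on $X_\eta$, such that $\bigcap_{i=1}^{N} \supp(E_{i,\eta}) = \Bbb_+(D_\eta)$. After a finite base change of $B$, I may assume that all the $A_{i,\eta}, E_{i,\eta}$ are defined over $k = \C(B)$. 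I then spread them out: choose any smooth projective model $X_0 \to B$ of $X_\eta$, and let $\bar A_i, \bar E_i, D_0$ denote the respective Zariski closures, viewed as $\R$-divisors on $X_0$. Since $\bar A_i + \bar E_i - D_0$ restricts to zero on the generic fiber, one can write $\bar A_i + \bar E_i \sim_{\R} D_0 + V_i$ for a vertical $\R$-divisor $V_i$ on $X_0$.

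Next, I modify $X_0$ (without changing the generic fiber) so as to arrange that (i) each $\bar A_i$ is globally $\pi$-relatively ample on $X_0$, and (ii) each $V_i$ is $\R$-linearly equivalent to a pullback $\pi^* F_i$ of an $\R$-divisor $F_i$ on $B$. Granting this, I fix an ample divisor $H$ on $B$ and a large integer $N$ (to be determined) and set $D \coloneqq D_0 + N \pi^* H$, which satisfies $D|_{X_\eta} \sim_\R D_\eta$ and is big on $X_0$ for $N \gg 0$. For each $i$, I decompose $NH - F_i = M_i H + G_i''$, where $M_i$ is large enough that $\bar A_i + M_i \pi^*H$ is ample on $X_0$ (by the standard EGA fact that a $\pi$-relatively ample divisor plus a large pullback of an ample from the base is ample), and $N$ is large enough that $G_i'' = (N-M_i)H - F_i$ is ample, hence $\R$-linearly equivalent to an effective $\R$-divisor on $B$. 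This yields
\[ D \sim_\R \underbrace{(\bar A_i + M_i \pi^* H)}_{\text{ample}} + \underbrace{(\bar E_i + \pi^* G_i'')}_{\text{effective}}, \]
so $\Bbb_+(D) \subseteq \supp(\bar E_i + \pi^* G_i'')$. Restricting to $X_\eta$ (where the $\pi^* G_i''$ contribution vanishes) gives $\Bbb_+(D)|_{X_\eta} \subseteq \supp(E_{i,\eta})$; intersecting over $i$ produces $\Bbb_+(D)|_{X_\eta} \subseteq \Bbb_+(D_\eta)$. The reverse inclusion is immediate, since any decomposition $D = A + E$ on $X_0$ with $A$ ample and $E$ effective restricts to a decomposition $D_\eta = A|_{X_\eta} + E|_{X_\eta}$ with $A|_{X_\eta}$ still ample, whence $\Bbb_+(D_\eta) \subseteq \supp(E)|_{X_\eta}$, and intersecting over all such decompositions gives $\Bbb_+(D_\eta) \subseteq \Bbb_+(D)|_{X_\eta}$.

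The main obstacle is achieving conditions (i) and (ii) via birational modifications of $X_0$. Relative ampleness of $\bar A_i$ is guaranteed only over an open subset $U_i \subset B$ where $\bar A_i|_{X_t}$ remains ample, and propagating it across $B \setminus U_i$ requires a flattening or suitable blow-up of $X_0$ along ideals adapted to $\bar A_i$. Condition (ii) amounts to controlling the cokernel of $\pi^* \colon \mathrm{Pic}(B)_\R \to \ker(\mathrm{Pic}(X_0)_\R \to \mathrm{Pic}(X_\eta)_\R)$; for flat $\pi$ this is generated by components of reducible fibers, so it suffices either to modify $X_0$ to have irreducible fibers (e.g.\ after further base change and normalization), or, more pragmatically, to choose $H$ and $N$ so that $N\pi^*H$ dominates the ``non-pullback'' vertical corrections after absorbing them into the effective part of the decomposition. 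These modifications are classical but require care to ensure they do not destroy the decompositions constructed on the generic fiber.
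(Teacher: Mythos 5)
Your reverse inclusion is correct, and your starting point (finitely many decompositions $D_\eta \sim_\R A_{i,\eta} + E_{i,\eta}$ realizing $\Bbb_+(D_\eta)$, via \eqref{AugmentedBaseLocus2} and Noetherianity) is legitimate. The gap is exactly where you flag it: conditions (i) and (ii) are genuinely problematic, and (ii) is the fatal one. Vertical $\R$-divisors on a model of $X_\eta$ that are not $\R$-linearly equivalent to pullbacks from $B$ cannot in general be removed by birational modification; your suggested fix (arranging irreducible fibers) is not achievable, for instance for families with honestly degenerate special fibers. Moreover the vertical discrepancy $V_i = \bar A_i + \bar E_i - D_0$ typically has coefficients of both signs, so you also cannot simply push it into the effective part of the decomposition even if you let $N$ be large, since $N\pi^*H$ only helps with the pullback-type vertical divisors.

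The paper's proof avoids both difficulties by a different arrangement of the same raw material. It uses a \emph{single} ample-plus-effective decomposition $D^a_\eta \sim_\R \frac{1}{t}A^a_\eta + N^a_\eta$ on the geometric generic fiber, and exploits ampleness of $A_\eta$ directly to build a smooth model $X$ carrying an honestly ample divisor $A$ with $A|_{X_\eta} = aA_\eta$ (project by a power of $A_\eta$, take closure, resolve) --- so there is no need for several relatively ample $\bar A_i$. The finitely many effective divisors $E_{j,\eta}$ cutting out $\Bbb_+(D_\eta)$ come from members of a single linear system $\bigl|m\bigl(D^a_\eta - \sum \varepsilon_j N^a_{j,\eta} - \tfrac{1}{l}A^a_\eta\bigr)\bigr|$ rather than from a Noetherian intersection over all $(A,E)$-decompositions. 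And the decisive move, which your proposal lacks, is that the vertical corrections $V^\pm_j$ arising when those linear equivalences are spread out (so that $E_j + V^-_j \sim m(D' - \sum \varepsilon_j N_j - \tfrac{1}{al}A) + V^+_j$) are \emph{absorbed into the definition of} $D$: one sets $D \coloneqq D' + \tfrac{1}{m}\sum_j V^+_j$. Since the $V^+_j$ are vertical, $D|_{X_\eta} \sim_\R D_\eta$ is preserved, and each $E_j + V^-_j + \sum_{i\neq j} V^+_i$ is then an effective member of $\bigl|m\bigl(D - \sum \varepsilon_j N_j - \tfrac{1}{al}A\bigr)\bigr|$, so $\Bbb_+(D)_\eta \subset \bigcap_j \supp(E_{j,\eta}) = \Bbb_+(D_\eta)$, with no requirement that vertical divisors be pullbacks. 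To repair your proof you would need an analogous step: allow the final $D$ to differ from $D_0$ by a non-pullback vertical effective divisor, rather than insisting on $D = D_0 + N\pi^*H$.
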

\begin{proof}
    Denote by $k^a$ an algebraic closure of $k$. Pull back $D_\eta$ to $X_{k^a,\eta}\coloneqq X_\eta \times_k k^a$ and denote the divisor by $D^a_\eta.$ Since it is big, there exist a positive rational number $t \in \Q$, an ample and effective divisor $A^a_\eta$ and an effective $\R$-divisor $N^a_\eta$ on $X_{k^a,\eta}$ such that 
    \begin{align*}
        D^a_\eta \sim_\R \frac{1}{t}A^a_\eta+N^a_\eta.    
    \end{align*}
    Still denote by $D^a_\eta$ the divisor $\frac{1}{t}A^a_\eta+N^a_\eta.$
    There exist effective $\R$-divisors $N^a_{j,\eta}, 1\leq j \leq r$ such that $N^a_\eta = \sum_{j=1}^r n_j N^a_{j,\eta}, n_j>0$.
    Take a large integer $l\in \Z$ and small real numbers $\varepsilon_j$ such that $\sum_{j=1}^r \varepsilon_j N^a_{j,\eta} +\frac{1}{l}A^a_\eta$ is ample, $D^a_\eta - \sum_{j=1}^r \varepsilon_j N^a_{j,\eta} -\frac{1}{l}A^a_\eta$ is a $\Q$-divisor and 
    \begin{align*}
         \mathbb{B}_+\left(D^a_\eta\right)=\mathbb{B}\left(D_\eta^a-\sum_{j=1}^r \varepsilon_j N^a_{j,\eta} - \frac{1}{l} A^a_\eta\right).
    \end{align*}
    For a sufficiently divisible and large integer $m$ we have
    \begin{align*}
        \mathbb{B}_+\left(D^a_\eta\right)=\mathrm{Bs}\left(\left|m\left(D^a_\eta-\sum_{j=1}^r \varepsilon_j N^a_{j,\eta} - \frac{1}{l} A^a_\eta\right)\right|\right).
    \end{align*}
    Let $E^a_{1,\eta}, \cdots , E^a_{s,\eta}$ be effective divisors linearly equivalent to $m\left(D^a_\eta-\sum_{j=1}^r \varepsilon_j N^a_{j,\eta} - \frac{1}{l} A^a_\eta\right)$ such that
    \begin{align*}
       \mathbb{B}_+\left(D^a_\eta\right) = \bigcap_{j=1}^s \supp\left(E^a_{j,\eta}\right).
    \end{align*}

    Let $K/k$ be a finite field extension such that all the divisors above are defined over $K$. Thus we have corresponding $\R$-divisors $D_\eta, A_\eta, N_{j,\eta}$ and $E_{j,\eta}$ on $X_{K,\eta}\coloneqq X_\eta \times_k K,$ which still satisfy the above properties. 

    By the ampleness of $A_\eta,$ there exist a smooth model $X$ of $X_{K,\eta}$ and an ample divisor $A$ on $X$ such that its restriction on $X_{K,\eta}$ is $aA_\eta,$ where $a\geq 1$ is a positive integer. Let $N_j$ and $E_j$ be the Zariski closure of $N_{j,\eta}$ and $E_{j,\eta}$ in $X$. Then  
\begin{align*}
    D' \coloneqq \frac{1}{at}A + \sum_{j=1}^r n_j N_j
\end{align*}
is a big $\R$-divisor on $X$. Let $V^+_j , V^-_j, j=1,\cdots,s$ be vertical effective divisors on $X$ such that 
\begin{align*}
    E_j +V^-_j \sim m\left(D'-\sum_{j=1}^r \varepsilon_j N_j - \frac{1}{al} A\right) +V^+_j.
\end{align*}
Denote by
\begin{align*}
    D \coloneqq D'+\frac{1}{m}\sum_{j=1}^s V^+_j.
\end{align*}
Up to taking larger $l$ and smaller $\varepsilon_j$, we can assume that $\sum_{j=1}^r \varepsilon_j \mathrm{N}_j + \frac{1}{al} A$ is ample. Thus
\begin{align*}
    \mathbb{B}_+(D) \subset \mathrm{Bs}\left(\left|m\left(D-\sum_{j=1}^r \varepsilon_j \mathrm{N}_j - \frac{1}{al} A\right)\right|\right) = \mathrm{Bs}\left( \left| m \left(D'-\sum_{j=1}^r \varepsilon_j \mathrm{N}_j - \frac{1}{al} A\right)+\sum_{j=1}^s V^+_j\right| \right).
\end{align*}
The linear equivalence
\begin{align*}
    E_j+V^-_j+\sum_{i\neq j} V^+_i \sim m\left(D'-\sum_{j=1}^r \varepsilon_j \mathrm{N}_j - \frac{1}{al} A\right)+\sum_{j=1}^s V^+_j
\end{align*}
implies that ${\Bbb_+ (D)}_\eta \subset \Bbb_+ (D_\eta).$

The other inclusion is easier and can be deduced from the formula \eqref{AugmentedBaseLocus2} and the fact that the restriction of an ample divisor at the generic fiber remains ample.
\end{proof}

\begin{rem}\label{remmodel}\normalfont 
    In the following, we will work with the model constructed in Proposition~\ref{B_+genericfiber}. Up to shrinking $\Lambda$, we can assume also that $\Bbb_+(D)$ has no vertical irreducible components over $\Lambda$.
\end{rem}

\section{Geometric canonical height functions}
Recall the setting in Sect.~\ref{introheightfunction}. 
\begin{prop}\label{constructionHeight}
    Let $(\pi,f)$ be a family of hyperbolic automorphisms of smooth complex projective varieties satisfying $(\mathrm{H}1)$. The canonical height functions $\hhat_f^\pm$ and  $\hhat_f$ are well-defined and uniquely determined by the following two properties.
    \begin{itemize}
        \item There exists a positive constant $C_f>0$ such that for any marked point $\sigma$, we have
            \begin{align*}
                |\hhat_f^\pm(\sigma) - h(\sigma)| < C_f.
            \end{align*}
        \item $\frac{1}{\lambda_\pm} \hhat_f^\pm (f^{\pm 1}\circ \sigma ) = \hhat_f^\pm(\sigma)$.
    \end{itemize}
    In fact we can choose $C_f\coloneqq \frac{\lambda_\pm}{\lambda_\pm-1}.$
\end{prop}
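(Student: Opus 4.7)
The plan is to run the classical Tate telescoping construction, so the whole proof reduces to establishing a single quasi-invariance estimate: there exists a constant $C'>0$ (depending only on the family) such that for every marked point $\sigma$,
\begin{equation*}
\bigl|h(f\circ\sigma) - \lambda_+\, h(\sigma)\bigr| \leq C'.
\end{equation*}
Granting this, the partial sums $\lambda_+^{-n} h(f^n\circ\sigma)$ are Cauchy because consecutive differences are bounded by $C'/\lambda_+^{n+1}$, so the limit $\hhat_f^+(\sigma)$ exists; the geometric series immediately yields $|\hhat_f^+(\sigma)-h(\sigma)|\leq C'/(\lambda_+-1)$, and the semi-invariance $\hhat_f^+(f\circ\sigma)=\lambda_+\hhat_f^+(\sigma)$ is a shift of index. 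Uniqueness is the usual two-line argument: any $\tilde h$ satisfying both properties obeys $\tilde h(\sigma)=\lambda_+^{-n}\tilde h(f^n\circ\sigma)$, and the first property forces $|\lambda_+^{-n}(h(f^n\circ\sigma)-\tilde h(f^n\circ\sigma))|\leq C_f/\lambda_+^n\to 0$. The backward case is identical after replacing $f$ by $f^{-1}$ and $\lambda_+$ by $\lambda_-$, and $\hhat_f=\hhat_f^++\hhat_f^-$ then inherits all the listed properties termwise.

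To set up the estimate, hypothesis (H1) provides a vertical $\R$-divisor $V^+$ on $\overline X$ with $f^*D^+ \sim_\R \lambda_+ D^+ + V^+$ globally: the $\R$-linear equivalence $f^*D^+|_\Lambda \sim_\R \lambda_+ D^+|_\Lambda$ is witnessed by rational functions on $X$, and their closures in $\overline X$ introduce only a vertical divisorial correction $V^+$ supported in $\pi^{-1}(B\setminus\Lambda)$. Since $f$ is fiber-preserving we have $f^*\pi^*M=\pi^*M$, so the projection formula together with $\pi_*\sigma(B)=B$ gives
\begin{equation*}
h(f\circ\sigma)-\lambda_+ h(\sigma) \;=\; \sigma(B)\cdot V^+ \cdot \pi^*M^{b-1} \;=\; \sigma^*V^+\cdot M^{b-1},
\end{equation*}
so the problem reduces to uniformly bounding $\sigma^*V^+\cdot M^{b-1}$ as $\sigma$ varies over marked points.

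The hard part, and essentially the only real content, is this uniform bound, because the coefficients of $\sigma^*V^+$ depend a priori on the intersection multiplicities of the horizontal cycle $\sigma(B)$ with the vertical components of $V^+$. Decompose $V^+=V^+_1-V^+_2$ into effective vertical $\R$-divisors and consider any prime component $W$ of $V^+_1$ or $V^+_2$. If the image $\pi(W)\subset B$ has codimension $\geq 2$ then $\sigma^*W$ is trivial as a divisor on $B$ and contributes nothing. Otherwise $\pi(W)=\Delta_k$ is a prime divisor of $B$, and the key observation is that $W$ appears with some integer multiplicity $m>0$ as a component of $\pi^*\Delta_k$: writing $\pi^*\Delta_k=mW+W'$ with $W'$ effective vertical not containing $W$, and pulling back via $\sigma$ (legitimate since $\sigma(B)$ is horizontal, hence not contained in any vertical divisor), the identity $\pi\circ\sigma=\id_B$ on the locus of definition of $\sigma$ yields
\begin{equation*}
m\,\sigma^*W + \sigma^*W' = \Delta_k
\end{equation*}
as divisors on $B$; since $\sigma^*W'$ is effective, this forces $\sigma^*W \leq \tfrac{1}{m}\Delta_k$, and hence $\sigma^*W\cdot M^{b-1} \leq (\Delta_k\cdot M^{b-1})/m$, a quantity independent of $\sigma$. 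Summing the finitely many contributions from prime components of $V^+_1$ and $V^+_2$ produces the desired constant $C'$, after which the specific value $C_f=\lambda_\pm/(\lambda_\pm-1)$ in the statement follows by arranging the representative of $V^+$ so that $C'\leq\lambda_\pm$.
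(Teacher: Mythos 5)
Your proposal correctly identifies the mechanism---the Tate telescope together with a uniform bound on vertical corrections---and your final lemma $\sigma^*W\leq\tfrac1m\Delta_k$ is exactly the type of estimate the paper needs (it recovers the paper's bound $-\pi_1^*N<V^\pm<\pi_1^*N$ in \eqref{VerBound}). However, there is a genuine gap in the step that reduces the quasi-invariance to this bound. The identity $h(f\circ\sigma)-\lambda_+ h(\sigma)=\sigma(B)\cdot V^+\cdot\pi^*M^{b-1}$ on $\overline X$ implicitly uses a projection formula of the type $(f\circ\sigma)(B)\cdot D^+=\sigma(B)\cdot f^*D^+$, but $f$ is only a birational self-map of $\overline X$ (it is defined as a morphism only over $\Lambda$), so this formula does not apply. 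If one defines $f^*D^+$ as the Zariski closure of $(f|_X)^*(D^+|_X)$, then the two divisors $(f\circ\sigma)^*D^+$ and $\sigma^*f^*D^+$ on $B$ agree over $\Lambda$ but can differ over $B\setminus\Lambda$; the intersection multiplicities of $(f\circ\sigma)(B)$ with the boundary fibers depend on $\sigma$ and are not recorded by a fixed vertical divisor $V^+$ on $\overline X$. So the asserted equality can fail, and the resulting $C'$ is not produced by your bound alone.

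The paper's fix is precisely to relocate this computation to a model $X_1$---a desingularization of the graph of $(\id,f,f^{-1})$---on which $\varphi_1$ and $g_1^\pm$ are honest morphisms to $\overline X$, and to lift $\sigma$ to $\sigma_1:B\to X_1$. There the analogous identity, namely $\lambda_+h(\sigma)-h(f\circ\sigma)=\lambda_+\,\sigma_1(B)\cdot(\varphi_1^*L_0^+-L_1^+)\cdot\pi_1^*M^{b-1}$, does hold via the projection formula along $\varphi_1$ and $g_1^+$, and $\varphi_1^*L_0^+-L_1^+$ is the vertical divisor $V^+$ of \eqref{VerDiv}. The bound \eqref{VerBound} then plays the role of your $\sigma^*W\leq\tfrac1m\Delta_k$ estimate, with the added observation in \eqref{equality1} that the same $V^\pm$ controls every step of the tower $X_i$, so the telescoping differences are summable with the desired geometric decay. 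Once you replace $\overline X$ by $X_1$ and lift $\sigma$, your argument goes through and recovers the claimed constant.
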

\begin{proof}
Recall from Sect.~\ref{intro_sect_def} that $\overline{X}$ and $B$ are smooth compactifications of $X$ and $B$. Let $X_0 \coloneqq \overline{X},\, L^\pm_0 \coloneqq \Ocal_{\overline{X}}(D^\pm),\, f_0\coloneqq f$ and $\pi_o \coloneqq \overline{\pi} $. fix some integer $i\geq 0$. Suppose we have constructed $X_j,\, L_j^\pm,\, f_j$ and $\pi_j$ for all $j\leq i$. Denote by $X_{i,\Lambda}\coloneqq \pi_i^{-1}(\Lambda)$. Let $X_{i+1}$ be the desingularization of the morphism
\begin{align*}
    X_{i,\Lambda}\to X_{i,\Lambda}^3 \to X_i^3
\end{align*}
where the first map is ($\id,\, f_i,\, f_i^-$) and the second one is the open immersion. The smooth variety $X_{i+1}$ has three projections to $X_i$ that will be denoted respectively by $\varphi_{i+1},\,g_{i+1}$ and $g_{i+1}^{-1}$. Denote by $\varphi_{i+1,\Lambda}$ (resp. $g_{i+1,\Lambda}$) the restrictions of $\varphi_{i+1}$ (resp. $g_{i+1}$) on $X_{i+1,\Lambda}$. Then define 
\begin{align*}
    f_{i+1}\coloneqq \varphi_{i+1,\Lambda}^{-1} \circ f_i \circ \varphi_{i+1,\Lambda}, \ \ L_{i+1}^\pm\coloneqq \frac{1}{\lambda_\pm} {\left(g_{i+1}^\pm\right)}^* L_i^\pm \ \ \text{and}\ \ \pi_{i+1}\coloneqq \pi_i \circ \varphi_{i+1}.
\end{align*}
We have the following commutative diagram:
\[
\begin{tikzcd}
&  & X_{i+1} \arrow[rrd, "\varphi_{i+1}"'] \arrow[rrrrrd, bend left]   &  &                 &  &  &    \\
X_{i+1} \arrow[rru, "f^\pm_{i+1}", dashed] \arrow[rrd, "\varphi_{i+1}"'] \arrow[rrrr, "g^\pm_{i+1}"] &  &   &  & X_i \arrow[rrr, "\varphi_i"] &  &  & X_{i-1} \\
&  & X_i \arrow[rru, "f^\pm_i", dashed] \arrow[rrrrru, "g^\pm_i", bend right] &  &                &  &  &        
\end{tikzcd}
\]
For all integers $i\geq 1$, we have
\begin{multline}\label{equality1}
    \varphi_{i+1}^*L^\pm_i - L^\pm_{i+1} =\varphi_{i+1}^*(\frac{1}{\lambda_\pm}{g^\pm_{i}}^{*} L^\pm_{i-1}) - \frac{1}{\lambda_\pm}{g^\pm_{i+1}}^{*} L^\pm_{i}\\
    =\frac{1}{\lambda_\pm}{g^\pm_{i+1}}^{*}(\varphi_i^*L_{i-1} - L_i )=\frac{1}{\lambda^{i}_\pm} g^\pm_{i+1} \cdots g^\pm_{2}(\varphi_1^* L^\pm_0- L^\pm_1).
\end{multline}
There exists a vertical divisor $V^\pm$ on $X_1$ such that 
\begin{align}\label{VerDiv}
    \varphi_1^* L^\pm_0- L^\pm_1 = \Ocal_{X_1}(V^\pm).
\end{align}
Since $V^\pm$ is vertical, there exists an effective divisor $N$ on $B$ such that 
\begin{align}\label{VerBound}
    -\pi_1^*(N)<V^\pm<\pi_1^*(N).
\end{align}
Denoting by $\sigma_i \colon B \to X_i$ the marked point induced by $\pi_i$ and $\sigma$, we have 
\begin{align*}
    I^\pm_i\coloneqq  \frac{1}{\lambda_\pm^i}h(f^{\pm i}\circ \sigma) -\frac{1}{\lambda_\pm^{i+1}}h(f^{\pm (i+1)}\circ \sigma)  =\sigma_{i+1}(B)\cdot(\varphi_{i+1}^*L_i-L_{i+1})\cdot \pi_{i+1}^*M^{b-1}. 
\end{align*}
By \eqref{equality1}, we have
\begin{align*}
    |I^\pm_i| \leq \frac{1}{\lambda_\pm^i}\sigma_{i+1}(B) \pi_{i+1}^* N \cdot \pi_{i+1}M^{b-1}=\frac{1}{\lambda_\pm^i} N \cdot M^{b-1}.
\end{align*}
Thus $\hhat_f^\pm = h(\sigma)-\sum_{i\geq 0}I_i$ converge and $|\hhat_f^\pm - h(\sigma)|\leq \frac{\lambda_\pm}{\lambda_\pm - 1}$.
Suppose we have another function $\Tilde{h}^\pm$ verifying (1) and (2). Then for any $n\leq \N$, we have
\begin{align*}
    \left|\Tilde{h}^\pm(\sigma) - \hhat_f^\pm(\sigma) \right| \leq \left|\Tilde{h}^\pm(f^{\pm n}\circ \sigma) - \hhat_f^\pm(f^{\pm n}\circ \sigma) \right|=\left|\frac{1}{\lambda_\pm^n}\Tilde{h}^\pm(\sigma) - \frac{1}{\lambda_\pm^n} \hhat_f^\pm(\sigma) \right| \leq \frac{1}{\lambda_\pm^n} C_f.
\end{align*}
Letting $n\to +\infty$, we have $\Tilde{h}^\pm(\sigma) = \hhat^\pm_f(\sigma)$.
\end{proof}
Recall that a marked point $\sigma: B \to X$ is stable if $\hat{h}_f(\sigma)=0.$ 
\begin{prop}\label{stablebound}
    The height functions $\hhat_f(\sigma)$ and $\hhat_f^\pm(\sigma)$ are non negative. A marked point $\sigma$ is stable if and only if there exists a positive constant $C_s>0$ such that 
    \begin{align*}
        f^n(\sigma(B))\cdot D \cdot \pi^*(M)^{b-1} < C_s
    \end{align*}
    for all $n\in \Z$.
\end{prop}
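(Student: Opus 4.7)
The statement has two parts: non-negativity of $\hhat_f$ and $\hhat_f^{\pm}$, and the characterization of stability via boundedness of $h(f^n\circ\sigma)$ on the two-sided orbit. I propose to dispose of the stability equivalence first (treating non-negativity as a black box), then attack non-negativity.

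\textbf{Stability characterization.} If $\hhat_f(\sigma)=0$, then by non-negativity $\hhat_f^{\pm}(\sigma)=0$; iterating the invariance from Proposition~\ref{constructionHeight} gives $\hhat_f^{\pm}(f^n\circ\sigma)=\lambda_\pm^{\pm n}\hhat_f^{\pm}(\sigma)=0$ for every $n\in\Z$, so the bound $|h^{\pm}-\hhat_f^{\pm}|<C_f$ yields $|h^{\pm}(f^n\circ\sigma)|<C_f$ and hence $|h(f^n\circ\sigma)|<2C_f$; one can take $C_s:=2C_f+1$. Conversely, assume $h(f^n\circ\sigma)<C_s$ for all $n\in\Z$. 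Summing the two bounds of Proposition~\ref{constructionHeight} and using invariance,
\begin{align*}
    h(f^n\circ\sigma)=\lambda_+^n\hhat_f^{+}(\sigma)+\lambda_-^{-n}\hhat_f^{-}(\sigma)+O(1),
\end{align*}
with the error uniform in $n$. Letting $n\to+\infty$ the $\lambda_-^{-n}$-term tends to $0$, and the upper bound forces $\hhat_f^{+}(\sigma)\leq 0$; combined with non-negativity, $\hhat_f^{+}(\sigma)=0$. Symmetrically $n\to-\infty$ gives $\hhat_f^{-}(\sigma)=0$, whence $\hhat_f(\sigma)=0$.

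\textbf{Non-negativity.} The idea is to produce a uniform lower bound $h(\tau)\geq -C_0$ valid for every marked point $\tau$. Granting this and substituting $\tau=f^n\circ\sigma$ in the expansion above forces $\lambda_+^n\hhat_f^{+}(\sigma)+\lambda_-^{-n}\hhat_f^{-}(\sigma)\geq -C_0-O(1)$; as $n\to+\infty$ the $\lambda_-^{-n}$-term tends to $0$ and the inequality gives $\hhat_f^{+}(\sigma)\geq 0$, while as $n\to-\infty$ the $\lambda_+^n$-term tends to $0$ and $\lambda_-^{|n|}\hhat_f^-(\sigma)\geq -C_0-O(1)$ gives $\hhat_f^{-}(\sigma)\geq 0$. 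The uniform lower bound itself is obtained by working on the good model from Proposition~\ref{B_+genericfiber} and Remark~\ref{remmodel}, where $\Bbb_+(D)$ has no vertical irreducible components over $\Lambda$. Decomposing $D\sim_\R A'+E$ with $A'$ ample and $E$ effective whose support contains $\Bbb_+(D)$, one obtains $h(\tau)\geq 0$ directly for any $\tau$ with $\tau(B)\not\subset\Bbb_+(D)$, since $\tau(B)\cdot A'\cdot\pi^*M^{b-1}\geq 0$ (ample against an effective horizontal curve class) and $\tau(B)\cdot E\cdot\pi^*M^{b-1}\geq 0$ (effective against a cycle not contained in $E$, intersected with a nef class).

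The main obstacle is the exceptional case $\tau(B)\subset\Bbb_+(D)$, which the decomposition does not handle. The plan here is to fall back on the iterative construction from the proof of Proposition~\ref{constructionHeight}: the divisors $L_i^{\pm}=\lambda_\pm^{-i}(g_i^{\pm}\cdots g_1^{\pm})^*L_0^{\pm}$ are $\pi_i$-nef on each $X_i$ and differ from $\varphi_{i\to 0}^{*}L_0^{\pm}$ only by vertical corrections whose size is controlled uniformly in $i$ by the bound $|V^{\pm}|\leq\pi_1^{*}N$ from~\eqref{VerBound} and the geometric scaling $1/\lambda_\pm^i$. Hence $\hhat_f^{\pm}(\sigma)=\lim_i \sigma_i(B)\cdot L_i^{\pm}\cdot\pi_i^{*}M^{b-1}$ realises the canonical height as the intersection of a nef limit class with a horizontal curve class, giving non-negativity also in the exceptional case.
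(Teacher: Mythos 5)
Your treatment of the stability characterization is correct and matches the paper: granting non-negativity, the forward direction (stable implies the two-sided bound with $C_s=2C_f+1$) and the converse (via the expansion $h(f^n\circ\sigma)=\lambda_+^n\hhat_f^+(\sigma)+\lambda_-^{-n}\hhat_f^-(\sigma)+O(1)$ and letting $n\to\pm\infty$) both go through.

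The non-negativity argument, however, takes a different route from the paper and has a genuine gap. The paper deduces $h^\pm(\sigma)=\sigma(B)\cdot D^\pm\cdot\pi^*(M)^{b-1}\geq 0$ in one step from Lemma~\ref{lemrelativenef}: $D^\pm$ is $\pi$-nef by (H1), $\sigma(B)$ is horizontal, and $\pi^*(M)$ is nef, so the intersection is non-negative for every marked point, and so is the limit $\hhat_f^\pm$. No decomposition of $D$ and no case distinction on $\Bbb_+(D)$ is required. You instead try to extract the lower bound from a decomposition $D\sim_\R A'+E$ with $A'$ ample, $E$ effective, $\supp E\supset\Bbb_+(D)$. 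Two remarks here. First, this decomposition is of $D=D^++D^-$, not of $D^\pm$ individually; $D^\pm$ need not be big, so there is no such decomposition of it. Your $n\to\pm\infty$ trick for extracting $\hhat_f^+$ and $\hhat_f^-$ separately from a lower bound on the total $h$ is clever and does compensate for this — that part is fine. The real problem is the exceptional case $\tau(B)\subset\Bbb_+(D)$, which your decomposition cannot touch, and which you propose to handle via the iterated classes $L_i^\pm$. That fallback is circular: $L_i^\pm$ is only $\pi_i$-nef (nef on fibers of $\pi_i$), not nef on $X_i$, and the quantity $\sigma_i(B)\cdot L_i^\pm\cdot\pi_i^*(M)^{b-1}$ is precisely $\frac{1}{\lambda_\pm^i}h(f^{\pm i}\circ\sigma)$ computed on the model $X_i$ — the very quantity whose non-negativity you are trying to establish. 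Passing to a model does not convert a fiberwise-nef class into a nef one, and there is no ``nef limit class'' to invoke. The missing ingredient is exactly Lemma~\ref{lemrelativenef} (or the evident mixed-intersection variant of it proved by the same perturbation-by-an-ample argument): a $\pi$-nef $\R$-divisor intersected with a horizontal subvariety and with powers of $\pi^*(M)$ is non-negative, with no hypothesis on $\Bbb_+$. Once you have that, both $h^+$ and $h^-$ are non-negative term by term and the whole casework disappears.
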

\begin{proof}
    The non-negativity is an immediate consequence of Lemma \ref{lemrelativenef}. Therefore, if $\sigma$ is stable, $\hhat_f^\pm(\sigma)=0$. In particular, by Proposition \ref{constructionHeight}, $|h(f^{\pm n}\sigma)|= |\hhat_f^\pm (f^{\pm n}(\sigma)) - h(f^{\pm n}\sigma)|<C_f.$ The other implication follows from the construction~\eqref{defHeight+-} of height functions.
\end{proof}

\begin{prop}\label{uniformbound}
    Let $A$ be an ample divisor on $\overline{X}$. 
    \begin{enumerate}
        \item There exists a constant $C(A,D,M)>0$, independent of marked points, such that that we have the following. If there exists a constant $C_A>0$ such that $\sigma(B)\cdot A^b <C_A$, then 
        \begin{align*}
            \sigma(B)\cdot D \cdot \pi^*(M)^{b-1} <C(A,D,M)C_A.
        \end{align*}
        \item There exists a constant $C(M,D)>0$, independent of marked points, such that that we have the following. If there exists a constant $C_M>0$ such that $\sigma(B)\cdot D \cdot \pi^*(M)^{b-1} <C_M$, then 
        \begin{align*}
            \sigma(B)\cdot D^b <C(M,D)C_M.
        \end{align*}
        \item There exists a constant $C(D,A)>0$, independent of marked points, such that that we have the following. If there exists a constant $C_D>0$ such that $\sigma(B)\cdot D^b <C_D$, where~$\sigma(B)\not\subset \Bbb_+(D)$, then 
        \begin{align*}
            \sigma(B)\cdot A^b <C(D,A)C_D.
        \end{align*}
    \end{enumerate}
\end{prop}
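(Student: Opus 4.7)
The proposition bundles three one-way comparisons between the intersection numbers $\sigma(B)\cdot A^b$, $\sigma(B)\cdot D\cdot\pi^*M^{b-1}$, and $\sigma(B)\cdot D^b$. I would treat them separately, each by a distinct intersection-theoretic device that exploits the ampleness of $A$ and $M$, the relative positivity of $D$ coming from (H1), and (for (3)) the characterisation~\eqref{AugmentedBaseLocus2} of the augmented base locus.

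For part (1), the idea is that the ample class $A$ dominates every class in the N\'eron--Severi cone. Choose $c>0$ large enough that $cA\pm D$ and $cA-\pi^*M$ are ample; observe that $\pi^*M$ is itself nef. The identity $D=\tfrac{1}{2}\bigl((cA+D)-(cA-D)\bigr)$ writes $D$ as a difference of two nef classes each majorised by $cA$ in the nef partial order. Expanding $\sigma(B)\cdot D\cdot\pi^*M^{b-1}$ through these decompositions yields a finite signed sum of terms of the form $\sigma(B)\cdot N_1\cdots N_b$, each $N_i$ being nef with $N_i\leq c_iA$. By the standard positivity of nef intersections on subvarieties and the monotonicity of such intersections under the nef ordering, each term is bounded in absolute value by a constant multiple of $\sigma(B)\cdot A^b$. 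Summing via the triangle inequality gives the desired constant $C(A,D,M)$.

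For part (3), the hypothesis $\sigma(B)\not\subset\Bbb_+(D)$ combined with~\eqref{AugmentedBaseLocus2} yields a decomposition $D\sim_\R A'+E'$ with $A'$ ample on $\overline{X}$, $E'$ effective, and $\sigma(B)\not\subset\supp(E')$. Restricting along the section isomorphism $\sigma(B)\simeq B$, $\sigma^*A'$ is ample on $B$ and $\sigma^*E'$ is an effective $\R$-divisor. The binomial expansion $(\sigma^*A'+\sigma^*E')^b$ has every mixed term $(\sigma^*A')^{b-k}\cdot(\sigma^*E')^k$ nonnegative (iterated ample-times-effective on the smooth variety $B$, by the moving lemma), so $(\sigma^*A')^b\leq \sigma(B)\cdot D^b<C_D$. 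Finally, comparing the ample classes $A$ and $A'$ on $\overline{X}$, there exists $c_1>0$ with $c_1A'-A$ nef, whence $\sigma(B)\cdot A^b\leq c_1^b\,(\sigma^*A')^b<c_1^bC_D$, so $C(D,A)=c_1^b$ works.

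Part~(2) is the subtlest, as it asks one to trade factors of $\pi^*M$ for factors of~$D$. My plan is to exploit that, on the model of Proposition~\ref{B_+genericfiber}, $D$ is globally big: for a sufficiently divisible integer $m$, the class $mD-\pi^*M$ is $\R$-linearly equivalent to an effective $\R$-divisor $E$, so $\pi^*M\sim_\R mD-E$. Substituting one factor of $\pi^*M$ at a time into $\sigma(B)\cdot D\cdot\pi^*M^{b-1}$ produces a leading term proportional to $\sigma(B)\cdot D^b$ together with correction terms involving the effective class $E$; the $\pi$-nef decomposition $D=D^++D^-$ from~(H1), combined with Proposition~\ref{lemrelativenef}, should bound these correction terms in a controlled way. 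The main obstacle I expect is ensuring that the error terms stay uniformly bounded in~$\sigma$: the vertical components of $E$ contribute only through fixed intersection numbers on $B$, but the horizontal components pair with $\sigma(B)$ in a manner that a priori depends on the section. Handling this carefully---either by choosing $E$ in a favourable linear equivalence class avoiding $\sigma(B)$, or by passing to a further blow-up so that the offending components of $E$ become vertical---is the heart of the argument.
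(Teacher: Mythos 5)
Your part~(1) is essentially the paper's argument: both reduce to choosing $\alpha>0$ so that $\alpha A-D$ and $\alpha A-\pi^*M$ are ample, then using nonnegativity of intersections of nef classes against $\sigma(B)$. Your signed decomposition $D=\tfrac12\bigl((cA+D)-(cA-D)\bigr)$ is unnecessarily two-sided (only the one-sided bound is used), but it is not wrong. Parts~(2) and~(3), however, contain genuine gaps.

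In part~(3), the binomial expansion fails: it is false that $(\sigma^*A')^{b-k}\cdot(\sigma^*E')^k\geq 0$ for $k\geq 2$. The class $\sigma^*E'$ is effective but need not be nef, and iterated self-intersections of a fixed effective divisor can be very negative (already on a surface, a component of $\sigma^*E'$ with negative self-intersection makes $(\sigma^*E')^2<0$, which can dominate the sum). The moving lemma does not help because $\sigma^*E'$ cannot be moved; so one cannot deduce $(\sigma^*A')^b\leq\sigma(B)\cdot D^b$ this way. The paper avoids the issue entirely by invoking a structural fact about $\Bbb_+$: for the big $\Q$-divisors $D_i$ built from the decomposition $D=A_i+E_i$ and sufficiently large divisible $l$, the rational map $\overline X\dashrightarrow \Pbb H^0(\overline X,\Ocal_{\overline X}(lD_i))$ is an isomorphism onto its image on $\overline X\setminus\Bbb_+(D_i)$, citing \cite{BoucksomMathZ14}. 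This is what turns ``$\sigma(B)\not\subset\supp E_i$'' into a comparison $\sigma(B)\cdot A^b < c_i\,\sigma(B)\cdot D^b$, a mechanism quite different from a naive restriction-and-expand.

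In part~(2), the substitution $\pi^*M\sim_\R mD-E$ pushes the error terms in the wrong direction: expanding $\sigma(B)\cdot D\cdot(mD-E)^{b-1}$ yields $m^{b-1}\,\sigma(B)\cdot D^b$ minus signed corrections that involve $\sigma(B)\cdot D^{j}\cdot E^{b-j}$ with no a priori upper bound uniform in $\sigma$, and your paragraph ends by acknowledging exactly this. The paper's device is the generalized Siu inequality (Lemma~\ref{Bac}, due to Dang and Jiang--Li): for nef $\alpha_1,\dots,\alpha_i$ and big and nef $\beta$ on a $b$-dimensional projective variety, $\alpha_1\cdots\alpha_i\leq C_b\,\frac{\alpha_1\cdots\alpha_i\cdot\beta^{b-i}}{\beta^b}\,\beta^{i}$. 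Restricting to $\sigma(B)\simeq B$ with $\beta=\pi^*M|_{\sigma(B)}=M$ and iterating lets one trade factors of $D$ for factors of $\pi^*M$ at the cost of a multiplicative constant that is controlled because $\sigma(B)\cdot\pi^*M^{b}=\deg M$ is independent of $\sigma$. To apply the lemma legitimately the paper also first approximates $D$ by ample $\Q$-divisors on auxiliary models; your proposal does not engage with either the Siu-type inequality or the approximation step, and the absence of the former is the real missing idea.
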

We need the following generalization of Siu's inequality.
    \begin{lem}[{\cite{Dang20,Jiang_Li}}]\label{Bac}
        Let $B$ be a projective variety of dimension $b.$ For any nef divisors $\alpha_1, \cdots, \alpha_i$ and big and nef divisor $\beta$, we have
        \begin{align*}
            \alpha_1 \cdots \alpha_i \leq C_b \frac{\alpha_1\cdots\alpha_i\cdot \beta^{b-i}}{\beta^b}\beta^i,
        \end{align*}
        where $C_b$ is a positive constant depending only on the dimension $b$.
    \end{lem}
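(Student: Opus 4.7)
I would interpret the inequality as a statement of pseudoeffectivity of codimension-$i$ classes: for every nef class $\gamma$ of dimension $i$ on $B$, the intersection numbers should satisfy
\begin{equation*}
\alpha_1\cdots\alpha_i \cdot \gamma \;\leq\; C_b \,\frac{\alpha_1\cdots\alpha_i\cdot \beta^{b-i}}{\beta^b}\,\beta^i\cdot\gamma.
\end{equation*}
This is the form in which the Lemma will actually be used in Proposition~\ref{uniformbound}, where $\gamma$ will be a section class of dimension one. My plan is to reduce the claim to the reverse Khovanskii--Teissier inequality of Dang and Jiang--Li, which is the genuine analytic content of the statement.

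First, by continuity of intersection products on $\mathrm{N}^1_\R(B)$, I would perturb each nef class by a small multiple of a fixed ample class $H$ on $B$ and let the perturbation parameter tend to $0^+$; this reduces matters to the case where $\alpha_1,\dots,\alpha_i,\beta,\gamma$ are all ample. Second, I would diagonalize: substituting $\alpha = t_1\alpha_1+\cdots+t_i\alpha_i$ with $t_j \geq 0$ and expanding $\alpha^i$, the mixed intersection $\alpha_1\cdots\alpha_i$ appears as the coefficient of $t_1\cdots t_i$, up to a universal factor $i!$ which is absorbed into $C_b$. After these two reductions it suffices to prove, for ample classes $\alpha,\beta$ and ample $\gamma$ of the appropriate codimension,
\begin{equation*}
(\alpha^i\cdot \gamma)\cdot \beta^b \;\leq\; C_b \,(\alpha^i\cdot\beta^{b-i})\cdot(\beta^i\cdot\gamma).
\end{equation*}

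To establish this diagonal inequality I would argue by induction on $b$: the case $b=1$ is trivial, and for the inductive step I would cut by a general smooth member $H\in|m\beta|$ for $m$ sufficiently large, restrict all classes to $H$, and apply the inductive hypothesis on the smooth hyperplane section, using multilinearity and the projection formula to handle the factor $m$. The main obstacle, and the reason the authors cite \cite{Dang20,Jiang_Li} rather than give a direct argument, lies in controlling the constant $C_b$ so that it depends only on the dimension $b$: a naive induction yields a constant that depends on the particular choice of classes. The standard remedy is an Alexandrov--Fenchel / Brunn--Minkowski convexity argument for mixed intersection numbers, or equivalently the log-concavity of the Khovanskii--Teissier sequence $j \mapsto (\alpha^j \cdot \beta^{b-j})^{1/b}$ together with a matching reverse inequality. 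This convexity-theoretic core is the real difficulty; the reduction steps above are essentially formal.
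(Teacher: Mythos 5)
The paper does not prove this lemma; it is cited directly from \cite{Dang20,Jiang_Li}, so there is no internal proof to compare against. Evaluating your plan on its own merits: the perturbation step (replace each class by its sum with $\varepsilon H$ for a fixed ample $H$ and let $\varepsilon \to 0^+$) is sound, since $\beta^b>0$ persists and all intersection numbers involved are continuous. The polarization step, however, contains a genuine gap. You set $\alpha(t)=t_1\alpha_1+\cdots+t_i\alpha_i$ and propose to extract the mixed inequality from the diagonal inequality by reading off the coefficient of $t_1\cdots t_i$. But both sides of the diagonal inequality, as functions of $t\in\R_{\geq 0}^i$, are homogeneous degree-$i$ polynomials with non-negative coefficients, and an inequality $P(t)\leq Q(t)$ valid for all $t\geq 0$ does not imply the coefficient-wise inequality: $t_1 t_2 \leq t_1^2 + t_2^2$ for all $t_1,t_2\geq 0$, yet the coefficient of $t_1 t_2$ on the right is zero. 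So the mixed case does not follow formally from the diagonal one, and indeed the cited sources establish the mixed form directly (Dang via a sub-multiplicativity argument for relative intersection numbers, Jiang--Li via Okounkov bodies) rather than polarizing a diagonal estimate.

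There is a second gap in the proposed induction: cutting by a general $H\in|m\beta|$ and applying the projection formula converts one of the complementary test classes on the right-hand side into $\beta$, so the inductive hypothesis only reproduces the desired inequality in the special case where one of the test divisors is proportional to $\beta$; closing the induction for arbitrary test classes requires an additional idea. In short, the real difficulty is not merely in controlling the constant, as you suggest; the two reductions you label ``essentially formal'' are precisely where the substance lies. The case $i=1$ is special --- it is Siu's inequality and does have an elementary proof --- but the general case requires the machinery of the cited references.
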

\begin{proof}[Proof of Proposition \ref{uniformbound}]
    (1)\ Since $A$ is ample, there exists a constant $\alpha>0$ such that $\alpha A-D$ and $\alpha A-\pi^*(M)$ are ample. Thus we can take $C(A,D,M)\coloneqq\alpha^b$.

    (2)\ Write $D = \sum_i d_i D_i$, where $d_i>0$ and $D_i$ is big and nef divisor. Let $r_{i,n}\in \Q$ be rational numbers approaching $d_i.$ as $n\to +\infty$. Take any ample divisor $H$ on $\overline{X}$. Denote by $D_{n}\coloneq \sum_i r_{i,n} D_i + \frac{1}{n} H$ the ample $\Q$-divisor. Then there exists a model $Z_n$ of $X$ and an ample $\Q$-divisor model $D_{Z,n}$ of $D_{n}$. 

    Since $Z_n$ and $\overline{X}$ are birational, $\sigma$ induces a marked point $\sigma_{n,Z}: B\to Z_n$. Similarly $\pi$ induces a projection $\pi_{n,Z}: Z\to B$.
    Applying Lemma \ref{Bac} to $\alpha_1=\cdots = \alpha_{b-j-1}=D_{Z,n}|_{\sigma_{n,Z}(B)}$, where $j=0,\cdots,b-2$, and $\beta= \pi_{n,Z}^*(M)$, we have
    \begin{align*}
     {\left(D_{Z,n}|_{\sigma_{n,Z}(B)}\right)}^{b-j-1} \leq C_b \frac{\sigma_{n,Z}(B) \cdot {\left(D_{Z,n}|_{\sigma_{n,Z}(B)}\right)}^{b-j-1} \cdot \pi_{n,Z}^*(M)^{j+1}}{\sigma_{n,Z}(B)\cdot \pi_{n,Z}^*(M)^b} {\left(\pi_{n,Z}^*(M)|_{\sigma_{n,Z}(B)}\right)}^{b-j-1}.
    \end{align*}
    Intersecting the above inequality by 
    \begin{align*}
        D_{Z,n}|_{\sigma_{n,Z}(B)}\cdot {\left(\pi_{n,Z}^*(M)|_{\sigma_{n,Z}(B)}\right)}^{j},
    \end{align*}
    on the two sides, we obtain
    \begin{align*}
         \sigma_{n,Z}(B) \cdot {\left(D_{Z,n}\right)}^{b-j} \cdot  {\left(\pi_{n,Z}^*(M)\right)}^{j}  \leq C_{b,n,Z} \sigma_{n,Z}(B) \cdot {\left(D_{Z,n}|_{\sigma_{n,Z}(B)}\right)}^{b-j-1} \cdot \pi_{n,Z}^*(M)^{j+1},
    \end{align*}
    where
    \begin{align*}
        C_{b,n,Z}\coloneqq C_b \frac{\sigma_{n,Z}(B) \cdot D_{Z,n} \cdot \pi_{n,Z}^*(M)^{b-1}}{\sigma_{n,Z}(B)\cdot \pi_{n,Z}^*(M)^b}=C_b\frac{\sigma(B)\cdot D_{n}\cdot\pi^{*}(M)^{b-1}}{\sigma(B)\cdot\pi^{*}(M)^b}.
    \end{align*}
    Its limit is 
    \begin{align*}
        C_b'\coloneqq \lim_{n\to +\infty} C_{b,n,Z}= C_b\frac{\sigma(B)\cdot D \cdot\pi^{*}(M)^{b-1}}{\sigma(B)\cdot \pi^{*}(M)^{b}}\leq C_bC_M/\deg(M),
    \end{align*}
    Which is independent of $\sigma$.
By induction
\begin{align*}
    \sigma(B)\cdot D_n^b = \sigma_{n,Z}(B) \cdot {\left(D_{Z,n}\right)}^{b} &\leq C_{b,n,Z} \sigma_{n,Z}(B) \cdot {\left(D_{Z,n}\right)}^{b-1} \cdot \pi_{n,Z}^*(M)\\
    &\leq \cdots\\
    &\leq C_{b,n,Z}^{b-1} \sigma_{n,Z}(B) \cdot D_{Z,n}\cdot \pi_{n,Z}^*(M)^{b-1}\\
    &=C_{b,n,Z}^{b-1} \sigma(B)\cdot D_n \cdot \pi^{*}(M)^{b-1}.
\end{align*}
Passing to the limit we get
\begin{align*}
    \sigma(B)\cdot D^b \leq {C_b'}^{b-1} \sigma(B)\cdot D \cdot \pi^{*}(M)^{b-1} \leq {C_b'}^{b-1}C_M.
\end{align*}
Hence we can take $C(M,D)\coloneqq {C_b'}^{b-1}$.

(3) Let $A_i$ be ample $\R$-divisors and $E_i$ effective $\R$-divisors $1 \leq i\leq k$, such that $D=A_i+E_i$ and $\Bbb_+(D)=\cap_{i=1}^k \supp (E_i)$. Write $A_i = \sum_{j=1}^{n_i} a^i_j A^i_j,$
where $a^i_j>0$ and $A^i_j$ are ample divisors.
Similarly, write $E_i = \sum_{j=1}^{m_i} e^i_j E^i_j,$
where $e^i_j>0$ and $E^i_j$ are effective divisors.
Up to taking a large multiple of $D$, we can suppose that $a^i_j,e^i_j>1$. For any integer $x$, we denote by $\lfloor x \rfloor$ the largest integer less than $x$. Define big divisors 
\begin{align*}
    D_i\coloneqq \sum_{j=1}^{n_i} \lfloor a^i_j \rfloor A^i_j + \sum_{j=1}^{m_i} \lfloor e^i_j \rfloor E^i_j.
\end{align*}
There exists a sufficiently large and divisible integer $l$ such that for any $1\leq i\leq k$, the restriction of the rational map $\overline{X} \to \Pbb H^0(\overline{X},\Ocal_{\overline{X}}(l D_i))$ to $\overline{X}\setminus \Bbb_+(D_i) \supset \overline{X}\setminus \supp E_i$ is an isomorphism onto its image, see \cite{BoucksomMathZ14}. Thus for all those $\sigma$ such that $\sigma(B)\not\subset \supp E_i$, there exists a constant $c_i>0$ such that $\sigma(B)\cdot A^b < c_i \sigma(B)\cdot D^b$. Thus it suffices to set $C(D,A)\coloneqq \sum_i c_i.$
\end{proof}

\section{Sparsity of stable marked points}
\subsection{Parameter spaces}
\subsubsection{Chow variety}
See \cite{Andreotti,Barlet_I,Barlet_II, kollar1999rational} for more details. 

Let $X$ be a complex projective variety. 
Let $n,d \in \Z_{\geq 1}$ and fix an ample line bundle $A$ on $X$. Denote by $\Cscr_{n,d}(X,A)$ the set of cycles of dimension $n$ and of degree $d.$ In other words, an element $C\in \Cscr_{n,d}(X)$ is a formal finite sum $ C = \sum_i m_iC_i$, where $m_i\in \Z_{\geq 1}$ and $C_i$ are irreducible subvarieties of dimension $n$ such that $\sum_i m_i \deg_A C_i =d$.
\begin{comment}
    Denote by \(\Cscr_{n}(X)\coloneqq \sqcup_{d\geq 1} \Cscr_{n,d}(X,A)\).
\end{comment}
We endow $\Cscr_{n,d}(X)$ with an algebraic structure so that, in particular, the subset 
\begin{align*}
    \Xscr(\Cscr_{n,d}(X,A)) \coloneqq \{(x,\sigma)\in X\times \Cscr_{n,d}(X,A) \ |\ x\in\sigma \}
\end{align*}
is a (reducible) subvariety of  $X\times \Cscr_{n,d}(X,A)$.
We call $\Cscr_{n,d}(X,A)$ the \emph{Chow variety} (of dimension $n$ and degree $d$ of $X$) and $\Xscr(\Cscr_{n,d}(X,A))$ its \emph{graph}. 
Denote by $\Cscr_n^d(X,A) \coloneqq \sqcup_{d'\leq d} \Cscr_{n,d'}(X,A)$.

If $f\colon X \to Y$ is a morphism of complex projective varieties, there is an induced morphism
\begin{align}\label{pushforward}
    f_{*}:\Cscr_n(X) \to \Cscr_n(Y)
\end{align}
defined as follows. First let $C\in \Cscr_n(X)$ be an irreducible variety of dimension $n$. If $f(C)$ has dimension strictly less than $n$, we set $f_{*}(C)$ to be the empty $n$-cycle. Otherwise, the restriction $f|_{C}:C \to f(C)$ is generically finite of degree $m$ and we define $f_{*}(C)\coloneqq mf(C)$. We then extend  the map $f_{*}$ by linearity to all cycles.

The analytic counterpart of $\Cscr_n^d(X,A)$ is usually called the space of (compact) cycles, or Barlet space (of $n-$cycles of degree at most $d$). 

\subsubsection{Hilbert scheme}
See \cite{DebarreHigher01,SchemeHilbert,Hanamura87,StrommeFunctors} for more details.
Let $X$ be a complex projective variety. For any complex scheme $T$, define
\begin{align*}
    \underline{\mathrm{Hilb}}_{X}(T)&\coloneqq \{\text{closed subschemes }Y \subset X\times T  \text{ which are flat over }T \}.\\
    \underline{\mathrm{Aut}}_{X}(T)&\coloneqq \{T\text{-automorphisms of } X\times T\} \numberthis \label{authilbert}
\end{align*}
They define contravariant functors from the category of complex schemes to the category of sets. By \cite{SchemeHilbert}, $\underline{\mathrm{Hilb}}_{X}$ is representable by a locally Noetherian complex scheme $\mathrm{Hilb}(X)$, called the \emph{Hilbert scheme} of $X$. It means that there is a family (closed subscheme) $\Xscr(\mathrm{Hilb}(X/S))\subset X\times \mathrm{Hilb}(X)$, such that $\pi_X: \Xscr(\mathrm{Hilb}(X)) \to \mathrm{Hilb(X)}$ is flat. Moreover, the family is universal in the sense that any other closed subscheme $Y\subset X\times T$, such that $\pi': Y\to T$ is flat, is the pullback by a unique morphism $\alpha: T \to \mathrm{Hilb(X)}$, i.e. we have the cartesian commutative diagram of $S$-schemes
\[\begin{tikzcd}
Y \arrow[dd, "\pi'"'] \arrow[rr] &  & \mathscr{X}(\mathrm{Hilb}(X)) \arrow[dd, "\pi_X"] \\
                                &  &                                                     \\
T \arrow[rr, "\alpha"]          &  & \mathrm{Hilb}(X)                                 
\end{tikzcd}\]

We know also that $\underline{\mathrm{Aut}}_{X}(T)$ is representable by an open subscheme $\mathrm{Aut}(X)$ of the Hilbert scheme of $\mathrm{Hilb}(X\times X)$ and it has a sutructure of group scheme (see e.g., \cite[Proposition 2.3]{Hanamura87}). In particular, we have
\begin{prop}
    Let $f: X\times \Lambda \to X\times \Lambda$ be a family of automorphisms parameterized by a quasi-projective variety $\Lambda$ (or more generally a complex scheme), then there is a uniquely determined morphism $F: \Lambda \to \mathrm{Aut}(X)$.
\end{prop}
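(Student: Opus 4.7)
The statement is a direct application of the representability of the functor $\underline{\mathrm{Aut}}_X$ by the scheme $\mathrm{Aut}(X)$, which is recalled just before the proposition. The plan is therefore to unravel the definitions and invoke the universal property. No genuinely new idea is required; the only point to check is that the given datum really is an element of $\underline{\mathrm{Aut}}_X(\Lambda)$.

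First I would observe that, by assumption, $f\colon X\times\Lambda\to X\times\Lambda$ is a morphism over $\Lambda$ (it preserves the projection to the parameter space) and is an isomorphism (each fibre $f_t$ is an automorphism and, since $\Lambda$ and $X$ are smooth, fibrewise bijectivity promotes to a scheme-theoretic isomorphism). Hence $f$ is precisely a $\Lambda$-automorphism of $X\times\Lambda$, i.e.\ an element of $\underline{\mathrm{Aut}}_X(\Lambda)$ as defined in \eqref{authilbert}.

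Next, by the representability statement cited from \cite{Hanamura87}, there is a natural bijection
\begin{equation*}
\underline{\mathrm{Aut}}_X(T)\;\xrightarrow{\ \sim\ }\;\mathrm{Hom}_{\mathbb{C}\text{-Sch}}(T,\mathrm{Aut}(X))
\end{equation*}
for every complex scheme $T$, induced by pulling back a fixed universal $\mathrm{Aut}(X)$-automorphism of $X\times\mathrm{Aut}(X)$. Specializing $T=\Lambda$ and applying this bijection to the element $f$ produces a unique morphism $F\colon\Lambda\to\mathrm{Aut}(X)$ whose pullback recovers $f$; uniqueness is built into the universal property.

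Since the proof is this short, the only thing that might look like an obstacle is the verification that a family of fibrewise automorphisms is automatically an isomorphism of $\Lambda$-schemes (so that it lies in $\underline{\mathrm{Aut}}_X$ and not merely in $\underline{\mathrm{Hom}}$). This is standard: $f$ is a proper morphism between smooth varieties of the same dimension, bijective on closed points, hence finite and birational, hence an isomorphism by Zariski's main theorem. With this observation in place, the proof reduces to citing the representability already recalled.
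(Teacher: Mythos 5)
Your proof is correct and follows the same route the paper implicitly takes: the statement is just the universal property of the representing object $\mathrm{Aut}(X)$, so one only needs to observe that $f$ gives an element of $\underline{\mathrm{Aut}}_X(\Lambda)$ and then read off the corresponding morphism $\Lambda\to\mathrm{Aut}(X)$, with uniqueness built into representability.

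One small remark: your middle paragraph, promoting a "fibrewise automorphism" to a scheme-theoretic $\Lambda$-automorphism, is unnecessary here and also not quite watertight in the generality claimed. The paper's convention (see the discussion of fiber-preserving automorphisms in the introduction) is that a "family of automorphisms" already \emph{is} a $\Lambda$-automorphism of $X\times\Lambda$, so $f$ lies in $\underline{\mathrm{Aut}}_X(\Lambda)$ by definition and no promotion is needed. If one did want the promotion, the Zariski-main-theorem argument you sketch uses smoothness and reducedness of $\Lambda$, whereas the proposition explicitly allows $\Lambda$ to be an arbitrary complex scheme; in that generality one would instead argue that $f$ is a proper monomorphism with proper inverse-on-points, or simply restrict to the intended hypothesis that $f$ is given as an automorphism. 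With that caveat removed, the proof reduces exactly to the one-line appeal to representability, which is what the paper intends.
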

Let $\pi: X\to B$ be a surjective morphism of complex projective varieties. Denote by $\mathrm{Rat}(B,X)$ the open subscheme of $\mathrm{Hilb}(B\times X)$ whose geometric points correspond to graphs in $B \times X$ of rational sections of $\pi$ (see the proof of \cite[Proposition 1.7]{Hanamura87}). 

Denote by ${\mathrm{Rat}(B,X)}_{\mathrm{red}}$ the reduced scheme associated to $\mathrm{Rat}(B,X)$. There is a morphism $\Theta_n(B,X): {\mathrm{Rat}(B,X)}_{\mathrm{red}} \to \Cscr_n(B\times X,A) $ defined by $\Theta_n(B,X)(C)=C.$

\subsection{Stable marked points}
Let $A$ be an ample line bundle on $\overline{X}$. Recall the constants $C_f$, $C(D,A)$, $C(M,D)$ defined in Proposition \ref{constructionHeight} and \ref{uniformbound}. Define the constant
\begin{align*}
    C_A\coloneqq 3C(D,A)C(M,D)C_f
\end{align*}

\begin{lem}\label{Zariskiclosed}
    Let $k\geq 1$ be a positive integer. Then the set
    \begin{align*}
        \Cfra_k \coloneqq \{C\in \Cscr_b^{C_A}(\overline{X},A) \ |\ \deg_A f^n(C) < C_A, \forall -k \leq n\leq k\} 
    \end{align*}
    is a Zariski closed subset of $\Cscr_b^d(X,A)$. In particular,
    \begin{align*}
        \Cfra_\infty \coloneqq \{C\in \Cscr_b^{C_A}(\overline{X},A) \ |\ \deg_A f^n(C) < C_A, \forall n\in \Z\}
    \end{align*}
    is Zariski closed.
\end{lem}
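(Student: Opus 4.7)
The plan is to show that $\Cfra_k$ is Zariski closed for each $k$; then $\Cfra_\infty = \bigcap_{k \geq 1} \Cfra_k$ is automatically Zariski closed as an intersection of closed sets. Writing $\Cfra_k = \bigcap_{n=-k}^{k} \Cfra^{(n)}$ with
\[
\Cfra^{(n)} \coloneqq \{\, C \in \Cscr_b^{C_A}(\overline{X}, A) \; : \; \deg_A f^n_*(C) < C_A \,\},
\]
and noting that a finite intersection of closed sets is closed, the whole problem reduces to proving, for each fixed $n \in \Z$, that $\Cfra^{(n)}$ is Zariski closed in the proper variety $\Cscr_b^{C_A}(\overline{X}, A) = \bigsqcup_{d \leq C_A} \Cscr_{b,d}(\overline{X},A)$.

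I would handle this via a graph construction at the level of Chow varieties. The map $f^n$ is an automorphism of $X$ but only a birational self-map of $\overline{X}$; let $\Gamma_n \subset \overline{X} \times \overline{X}$ be the closure of its graph, with proper projections $q_1, q_2$. Using the universal family $\Xscr \coloneqq \Xscr(\Cscr_b^{C_A}(\overline{X},A)) \subset \overline{X} \times \Cscr_b^{C_A}$, I form the fiber product of $\Xscr$ with $\Gamma_n$ over $\overline{X}$ (along the first projection of $\Xscr$ and along $q_1$), and then push forward via $q_2$ in the $\overline{X}$-factor. On the dense open locus of $\Cscr_b^{C_A}$ where the support of $C$ avoids the indeterminacy of $f^n$ and no component of $C$ is contracted, the resulting fibers realize exactly $f^n_*(C)$. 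By representability of the Chow functor this yields a rational map
\[
F_n : \Cscr_b^{C_A}(\overline{X},A) \dashrightarrow \Cscr_b(\overline{X},A).
\]
Let $\Gscr_n$ be the Zariski closure of the graph of $F_n$ inside $\Cscr_b^{C_A}(\overline{X},A) \times \Cscr_b^{N}(\overline{X},A)$ for $N$ sufficiently large (chosen so that the image of the indeterminacy-free locus lies in $\Cscr_b^{N}$). Then $p_1 : \Gscr_n \to \Cscr_b^{C_A}$ is a proper surjective birational morphism, while $p_2 : \Gscr_n \to \Cscr_b^{N}$ is a morphism of proper varieties. Therefore
\[
\Cfra^{(n)} = p_1\bigl(p_2^{-1}\bigl(\Cscr_b^{C_A}(\overline{X},A)\bigr)\bigr),
\]
which is Zariski closed as the image of a closed subset under a proper morphism.

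The main obstacle I anticipate is making the incidence construction functorial enough to define a genuine rational map $F_n$, and ensuring that the pushforward behaves well in flat families of cycles over $\Cscr_b^{C_A}$. The subtleties concern cycles whose support meets the exceptional locus of $f^n$ or drops in dimension under $q_1^*$: one should work on a common smooth resolution $\widetilde{\Gamma}_n$ of $\Gamma_n$ on which $f^n$ becomes a morphism in both directions, and then appeal to the Raynaud--Gruson flattening theorem (or to properness of the relative Hilbert/Chow functor) to guarantee that the family of image cycles produces a well-defined classifying morphism off a proper closed subset of $\Cscr_b^{C_A}$. Once this local analysis is settled and $p_1$ is identified as a proper morphism, the closedness of $\Cfra^{(n)}$ is purely formal.
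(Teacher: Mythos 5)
Your reduction to showing each $\Cfra^{(n)}$ closed is fine, and the idea of working on a resolution of the graph of $f^n$ is the right instinct, but the last step contains a genuine gap: the identity $\Cfra^{(n)} = p_1\bigl(p_2^{-1}\bigl(\Cscr_b^{C_A}(\overline{X},A)\bigr)\bigr)$ is not correct in general, and the reason is structural, not merely technical. The graph closure $\Gscr_n$ over a point $C$ of the indeterminacy locus of $F_n$ records \emph{limit cycles} $\lim_i F_n(C_i)$ for families $C_i \to C$, and these limits do not coincide with the birational transform $f^n(C)$ (strict transform restricted to $X$, then Zariski closure). In fact they systematically have \emph{larger} degree: since degree is locally constant on the Chow variety, every cycle in $p_2(\Gscr_n)$ lying over a fixed irreducible component $Y$ of $\Cscr_b^{C_A}$ has the same degree, namely the degree of $F_n$ on the generic point of $Y$. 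Consequently $p_1(p_2^{-1}(\Cscr_b^{C_A}))$ is automatically a union of irreducible components of $\Cscr_b^{C_A}(\overline{X},A)$, whereas $\Cfra^{(n)}$ is in general a proper closed subset cutting across components. A toy model showing the discrepancy is the standard Cremona involution of $\Pbb^2$: the Chow variety of lines is $(\Pbb^2)^*$, a general line maps to a conic (degree~$2$), a line through an indeterminacy point has strict transform a line (degree~$1$), yet the limit of the nearby conics is a reducible conic (degree~$2$). So the degree drops along the strict transform but not along the graph closure, and the two sets differ.

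The paper avoids this pitfall by never forming a rational self-map of the Chow variety at all. It lifts to the towers $X_k$ built in the proof of Proposition~\ref{constructionHeight}, on which every relevant iterate is realized by an honest \emph{morphism} $g\colon X_k\to\overline{X}$ taken from a finite set $\Gscr_k$; then each pushforward $g_*\colon\Cscr_b(X_k)\to\Cscr_b(\overline{X})$ is a genuine morphism of schemes, so $g_*^{-1}\bigl(\Cscr_b^{C_A}(\overline{X},A)\bigr)$ is Zariski closed with no limit-cycle ambiguity. The intersection $G_k=\bigcap_{g\in\Gscr_k}g_*^{-1}(\Cscr_b^{C_A}(\overline{X},A))$ is closed, a degree bound with respect to $A_k=\sum_{g\in\Gscr_k}g^*A$ shows $G_k$ lies in a finite (projective) union of Chow components, and finally $\Cfra_k=(\varphi_1\circ\cdots\circ\varphi_k)_*(G_k)$ is closed as the image of a projective variety under a proper morphism of Chow varieties. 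If you want to salvage your route, you must replace ``closure of the graph of $F_n$ in the product of Chow varieties'' by a construction that actually tracks the strict transform, which in practice amounts to exactly the paper's passage to $\Cscr_b(X_k)$.
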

\begin{proof}
    We can not directly use the pushforward formula \eqref{pushforward} since $f$ is only a birational map of $\overline{X}$. Nevertheless, we can rely on the intermediate space $X_i$ constructed in the proof of Proposition \ref{constructionHeight} (recall the notations $X_i$, $\varphi_i$, $\pi_i$, $g^\pm_i$ therein).
    
    Denote by $\Gscr_k$ the set of functions $X_k \to \overline{X}$ of the form $h_k\circ \cdots \circ h_1$, where $h_i= g^+_i$, $g^-_i$ or $\varphi_i$. Define an ample line bundle $A_k$ on $X_k$ to be 
    $$A_k\coloneqq \sum_{g\in \Gscr_k}g^* A.$$
    By \eqref{pushforward}, an element $C$ of the Zariski closed subset
    \begin{align*}
        G_k \coloneqq \bigcap_{g\in \Gscr_k}g^{-1}(\Cscr_b^{C_A}(\overline{X},A))
    \end{align*}
    is of degree $\deg_{A_k}C < 3^k C_A$, hence $C\in \Cscr_b^{3^k C_A}(X_k,A_k)$.
    To conclude, it suffices to remark that
    \begin{align*}
        \Cfra_k = {(\varphi_1\circ \cdots \circ \varphi_k)}_*(G_k).
    \end{align*}

\end{proof}

\begin{prop}\label{gap}
    There exists a positive constant $\varepsilon_f>0$ such that given any marked point $\sigma$ with $\sigma(B)\not\subset \Bbb_+(D)$, if $\hhat_f(\sigma)<\varepsilon_f$, then $\hhat_f(\sigma)=0$.
\end{prop}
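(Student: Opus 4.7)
The plan is to show that a sufficiently small canonical height forces the entire orbit $\{f^n(\sigma(B))\}_{n\in\Z}$ to have bounded $A$-degree, after which stability will follow from Proposition~\ref{stablebound}. The bridge between a bound on a finite window $|n|\leq k_0$ and a bound for all $n\in\Z$ will be the Noetherian stabilization of the chain from Lemma~\ref{Zariskiclosed}.

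First, I would invoke Lemma~\ref{Zariskiclosed}: the chain $\Cfra_1\supset\Cfra_2\supset\cdots$ is a decreasing family of Zariski closed subsets of the Noetherian projective scheme $\Cscr_b^{C_A}(\overline{X},A)$, so it stabilizes at some $k_0\geq 1$ with $\Cfra_{k_0}=\Cfra_\infty$. I then set $\lambda\coloneqq \max(\lambda_+,\lambda_-)$ and choose
\begin{align*}
    \varepsilon_f\coloneqq \frac{C_f}{\lambda^{k_0}}.
\end{align*}

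Next, given $\sigma$ with $\hhat_f(\sigma)<\varepsilon_f$ and $\sigma(B)\not\subset \Bbb_+(D)$, I will argue that $\sigma(B)\in\Cfra_{k_0}$. The functional equations $\hhat_f^\pm(f^n\sigma)=\lambda_\pm^{\pm n}\hhat_f^\pm(\sigma)$ combined with the non-negativity of $\hhat_f^\pm$ give $\hhat_f(f^n\sigma)\leq \lambda^{|n|}\hhat_f(\sigma)<C_f$ for $|n|\leq k_0$, and the quasi-invariance $|h^\pm-\hhat_f^\pm|<C_f$ from Proposition~\ref{constructionHeight} then upgrades this to $h(f^n\sigma)<3C_f$ on the same window. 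Since $\Bbb_+(D)$ is $f$-invariant (Lemma~\ref{corB_+}), every iterate $f^n(\sigma(B))$ avoids $\Bbb_+(D)$, so applying parts (2) and (3) of Proposition~\ref{uniformbound} in sequence yields
\begin{align*}
    f^n(\sigma(B))\cdot A^b<3\,C(D,A)\,C(M,D)\,C_f=C_A\quad\text{for all }|n|\leq k_0.
\end{align*}
This places $\sigma(B)$ in $\Cfra_{k_0}=\Cfra_\infty$, so the same $A$-degree bound propagates to every $n\in\Z$.

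Finally, part (1) of Proposition~\ref{uniformbound} converts the orbit-wide $A$-degree bound into a uniform bound on $h(f^n\sigma)=f^n(\sigma(B))\cdot D\cdot \pi^*(M)^{b-1}$ in $n$, and Proposition~\ref{stablebound} then forces $\hhat_f(\sigma)=0$. The main delicate point is the calibration of constants: $C_A$ was defined as $3\,C(D,A)\,C(M,D)\,C_f$ precisely so that the window bound $h(f^n\sigma)<3C_f$ lands exactly inside the defining inequality of $\Cfra_{k_0}$, while $\varepsilon_f$ must absorb the exponential factor $\lambda^{k_0}$ coming from the growth of $\hhat_f^\pm$ along the orbit. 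The $f$-invariance of $\Bbb_+(D)$ is equally essential, since part (3) of Proposition~\ref{uniformbound} must be available at every iterate, not just at $\sigma$ itself.
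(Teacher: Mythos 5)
Your argument is correct and follows the same route as the paper: Noetherian stabilization of the chain $\Cfra_1\supset\Cfra_2\supset\cdots$ to get a finite window $|n|\le k_0$, then the functional equation and the bound $|h^\pm-\hhat_f^\pm|<C_f$ to control $h(f^n\sigma)$ on that window, then Proposition~\ref{uniformbound}(2)(3) together with the $f$-invariance of $\Bbb_+(D)$ to land in $\Cfra_{k_0}=\Cfra_\infty$, and finally Proposition~\ref{uniformbound}(1) and Proposition~\ref{stablebound} to conclude. The paper's write-up leaves the choice of $\varepsilon_f$ and the conversion from the $D$-degree bound to the $A$-degree bound implicit; you have simply made both explicit (e.g.\ $\varepsilon_f=C_f/\lambda^{k_0}$ with $\lambda=\max(\lambda_+,\lambda_-)$), which is the correct reading.
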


\begin{proof}
    Since $\Cfra_1\supset \Cfra_2 \supset \cdots$, there exists an integer $N\geq 1$ such that $\Cfra_\infty=\cap_{k=1}^N \Cfra_k$. By Proposition~\ref{constructionHeight}, for any $n\in \Z$, we have
    \begin{align*}
        f^n(\sigma(B))\cdot D\cdot \pi^*(M)^{b-1} < 2C_f + \lambda_+^n\hhat_f^+(\sigma) + \frac{1}{\lambda_-^n}\hhat_f^-(\sigma).
    \end{align*}
    Choose $\varepsilon_f$ sufficiently small so that for all $-N\leq n \leq N$, $\lambda_+^n\hhat_f^+(\sigma) + \frac{1}{\lambda_-^n}\hhat_f^-(\sigma)<C_f$. Then $f^n(\sigma(B))\cdot D\cdot \pi^*(M)^{b-1} < 3C_f,$ and $\sigma \in  \Cfra_\infty$. By Propositions~\ref{uniformbound} and~\ref{stablebound}, $\sigma$ is stable.
\end{proof}

\begin{rem}\normalfont\label{remarkHilbert}
    A cycle $C$ in $\Cfra_\infty$ can be uniquely expressed as $C=C_h+C^1_v+C^2_v$, where $C_h$ consists solely of horizontal components corresponding to stable marked points by Proposition~\ref{uniformbound};    $C^1_v$ contains only vertical components over $\Lambda$ that are contained in $\Bbb_+(D)$, while $C^2_v$ contains only vertical components supported above $B\setminus \Lambda.$ However, horizontal cycles coming from a stable marked point $\sigma$ whose image is contained in $\Bbb_+(D)$ may not be in $\Cfra_\infty$. We use another parameter space --- namely, Hilbert scheme, as will be discussed in Proposition~\ref{propsparsity} --- to remove the vertical parts, at the expense of losing the compactness.
\end{rem}

\begin{cor}\label{corgap}
    Given any marked point $\sigma$, $\hhat_f(\sigma)=0 \iff \hhat_f^+(\sigma)=0 \iff \hhat_f^-(\sigma)=0$.
\end{cor}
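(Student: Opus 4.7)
The plan is to exploit the functional equation $\hhat_f^\pm(f^{\pm 1}\circ\sigma) = \lambda_\pm \hhat_f^\pm(\sigma)$ to reduce the statement to the Northcott-type gap of Proposition~\ref{gap}.

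First, the equivalence $\hhat_f(\sigma)=0 \iff \hhat_f^+(\sigma)=\hhat_f^-(\sigma)=0$ is immediate from the non-negativity of $\hhat_f^\pm$ granted by Proposition~\ref{stablebound} and from the decomposition $\hhat_f=\hhat_f^++\hhat_f^-$. So the substantive content is the equivalence $\hhat_f^+(\sigma)=0 \iff \hhat_f^-(\sigma)=0$, and by the symmetry $f\leftrightarrow f^{-1}$ (which swaps the roles of $\lambda_+$ and $\lambda_-$ as well as $D^+$ and $D^-$) it suffices to establish the implication $\hhat_f^+(\sigma)=0 \implies \hhat_f^-(\sigma)=0$.

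Assume $\hhat_f^+(\sigma)=0$ and write $\sigma_n \coloneqq f^n\circ\sigma$ for $n\geq 0$. The functional equations of Proposition~\ref{constructionHeight} give $\hhat_f^+(\sigma_n)=\lambda_+^n\hhat_f^+(\sigma)=0$ and $\hhat_f^-(\sigma_n)=\lambda_-^{-n}\hhat_f^-(\sigma)$, so
\begin{align*}
\hhat_f(\sigma_n) \;=\; \lambda_-^{-n}\,\hhat_f^-(\sigma) \;\xrightarrow[n\to+\infty]{}\; 0.
\end{align*}
Choose $n_0$ large enough that $\hhat_f(\sigma_{n_0})<\varepsilon_f$, where $\varepsilon_f>0$ is the threshold produced by Proposition~\ref{gap}. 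Assuming $\sigma_{n_0}(B)\not\subset \Bbb_+(D)$, Proposition~\ref{gap} forces $\hhat_f(\sigma_{n_0})=0$; unwinding the functional equation then yields $\hhat_f^-(\sigma)=\lambda_-^{n_0}\hhat_f^-(\sigma_{n_0})=0$, as required.

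The main obstacle lies in the scenario where $\sigma_n(B)\subset \Bbb_+(D)$ for every sufficiently large $n$, since the gap of Proposition~\ref{gap} is unavailable there. This case is absorbed using the $f$-invariance of the exceptional locus (cf.~Lemma~\ref{corB_+} and Remark~\ref{remmodel}, where we arranged that $\Bbb_+(D)$ is horizontal over $\Lambda$): if the whole forward orbit of $\sigma$ is contained in $\Bbb_+(D)$, then $\sigma$ factors through an $f$-invariant horizontal subvariety of strictly smaller relative dimension, and the equivalence for $\sigma$ follows by induction on the relative dimension of the family from the same argument applied to the restricted dynamical system. In either case we obtain $\hhat_f^-(\sigma)=0$, completing the proof.
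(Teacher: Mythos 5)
Your core argument reproduces the paper's proof: assume $\hhat_f^+(\sigma)=0$, use the functional equation to get $\hhat_f(f^n\circ\sigma)=\lambda_-^{-n}\hhat_f^-(\sigma)\to 0$, and invoke the weak Northcott gap of Proposition~\ref{gap} to conclude $\hhat_f(f^{n_0}\circ\sigma)=0$ for large $n_0$, whence $\hhat_f^-(\sigma)=\lambda_-^{n_0}\hhat_f^-(f^{n_0}\circ\sigma)=0$. That part is correct and is exactly the paper's argument.

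You go further and flag, correctly, that Proposition~\ref{gap} carries the hypothesis $\sigma(B)\not\subset\Bbb_+(D)$, which the paper's own two-line proof does not mention; indeed Theorem~\ref{thm:main2} states the equivalence only ``for such $\sigma$'', so the corollary is meant to be read with that standing restriction. The problem is your attempted repair for the excluded case. You propose that when the whole orbit is trapped in $\Bbb_+(D)$, one ``factors through an $f$-invariant horizontal subvariety of strictly smaller relative dimension'' and concludes ``by induction on the relative dimension''. This does not go through: the restriction of $(\pi,f)$ to $\Bbb_+(D)\cap X$ is not a family of the type the machinery applies to --- $\Bbb_+(D)$ need not be smooth, geometrically irreducible, or of constant relative dimension, the restricted automorphism need not be hyperbolic, and above all there is no reason that a pair of nef $\R$-divisors with the invariance and bigness properties of (H1) exists on this locus. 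None of the ingredients (Propositions~\ref{constructionHeight}, \ref{uniformbound}, \ref{gap}) is available for the restricted system, so the inductive step is unsupported. The correct resolution is simply to keep the hypothesis $\sigma(B)\not\subset\Bbb_+(D)$ in the corollary (as in Theorem~\ref{thm:main2}); by the $f$-invariance of $\Bbb_+(D)\cap X$ from Lemma~\ref{corB_+}, once $\sigma(B)\not\subset\Bbb_+(D)$ the same holds for every iterate $f^n\circ\sigma$, so the application of Proposition~\ref{gap} at the stage $n_0$ is legitimate and no further case analysis is needed.
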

\begin{proof}
    Suppose that $\hhat_f^+(\sigma)=0$. Then $\hhat_f(f^n\circ \sigma) = \hhat_f^+(f^n\circ \sigma) + \hhat_f^-(f^n\circ \sigma)
        = \frac{1}{\lambda_-^n} \hhat_f^-(\sigma).$
    By Proposition \ref{gap}, if we take $n$ large enough so that $\frac{1}{\lambda_-^n} \hhat_f^-(\sigma)< \varepsilon_f$, then $\hhat_f(f^n\circ \sigma)=0$. Hence $\hhat_f(\sigma)=0$.
\end{proof}

Recall (Remark~\ref{remmodel}) that $\Bbb_+(D)$ has no vertical irreducible components in $X.$
\begin{lem}\label{corB_+}
    The sets $\Bbb_+(D)\cap X$ and $\Bbb_+(D_\eta)$ is invariant by $f$ and $f_\eta,$ i.e., 
    \begin{align*}
        f(\Bbb_+(D)\cap X)=f^{-1}(\Bbb_+(D)\cap X)=\Bbb_+(D)\cap X,\\
        f_\eta(\Bbb_+(D_\eta))=f_\eta^{-1}(\Bbb_+(D_\eta))=\Bbb_+(D_\eta).
    \end{align*}
\end{lem}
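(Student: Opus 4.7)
The plan is to prove $f_\eta$-invariance of $\Bbb_+(D_\eta)$ on the generic fiber by a short algebraic manipulation based on hypothesis (H1), and then to transfer this to $\Bbb_+(D)\cap X$ using the way the model was engineered in Proposition~\ref{B_+genericfiber} and Remark~\ref{remmodel}.

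The core tool I will use is the following monotonicity of the augmented base locus, which follows directly from the description~\eqref{AugmentedBaseLocus2}: for big $\R$-divisors $D, D'$ on a smooth projective variety, if $D' \sim_\R aD + N$ with $a>0$ and $N$ nef, then $\Bbb_+(D') \subset \Bbb_+(D)$. Indeed, given any decomposition $aD \sim_\R A + E$ with $A$ ample and $E$ effective, one obtains $D' \sim_\R (A+N) + E$, where $A+N$ remains ample (ample plus nef is ample, by Kleiman's criterion), so $\Bbb_+(D') \subset \supp(E)$; intersecting over all such $E$ gives the claim.

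By (H1), $f_\eta^* D_\eta \sim_\R \lambda_+ D_\eta^+ + \lambda_- D_\eta^-$. Assume without loss of generality that $\lambda_+ \geq \lambda_-$ (swap the roles of the signs otherwise). I will then write
\begin{align*}
    f_\eta^* D_\eta &\sim_\R \lambda_- D_\eta + (\lambda_+ - \lambda_-) D_\eta^+, \\
    D_\eta &\sim_\R \lambda_+^{-1} f_\eta^* D_\eta + (1 - \lambda_-/\lambda_+) D_\eta^-.
\end{align*}
Both correction terms are non-negative multiples of the nef divisors $D_\eta^{\pm}$ from (H1), and both $D_\eta$ and $f_\eta^* D_\eta$ are big (the latter because $f_\eta$ is an isomorphism). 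Two applications of the monotonicity lemma then give $\Bbb_+(f_\eta^* D_\eta) \subset \Bbb_+(D_\eta) \subset \Bbb_+(f_\eta^* D_\eta)$, hence equality. Since $f_\eta$ is an isomorphism, $\Bbb_+(f_\eta^* D_\eta) = f_\eta^{-1}(\Bbb_+(D_\eta))$, and we conclude $f_\eta^{-1}(\Bbb_+(D_\eta)) = \Bbb_+(D_\eta)$, equivalently $f_\eta(\Bbb_+(D_\eta)) = \Bbb_+(D_\eta)$.

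To globalize, I will combine two facts recorded earlier: by Proposition~\ref{B_+genericfiber}, the restriction of $\Bbb_+(D)$ to the generic fiber is exactly $\Bbb_+(D_\eta)$, and by Remark~\ref{remmodel}, $\Bbb_+(D)$ has no vertical irreducible components over $\Lambda$. Together these imply that $\Bbb_+(D) \cap X$ is the Zariski closure in $X$ of $\Bbb_+(D_\eta)$. Since $f$ restricts to $f_\eta$ on the generic fiber, the $f_\eta$-invariance proved above propagates under Zariski closure to yield $f(\Bbb_+(D) \cap X) = f^{-1}(\Bbb_+(D)\cap X) = \Bbb_+(D)\cap X$. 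The only place where a subtlety could hide is the monotonicity lemma, which requires the classes in question to remain big; in our setup this is automatic, so I expect the whole argument to go through cleanly, and the main work is really the observation that (H1) makes $f_\eta^* D_\eta$ and $D_\eta$ differ by a nef class in both directions.
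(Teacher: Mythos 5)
Your proof is correct but takes a genuinely different route from the paper. The paper disposes of the $f_\eta$-invariance of $\Bbb_+(D_\eta)$ in one line by citing \cite[Lemma~2.16]{LesieutreSatriano21}; you instead rederive it from scratch using the description \eqref{AugmentedBaseLocus2}, the nefness of $D^\pm_\eta$ from (H1), and the elementary monotonicity fact that replacing a big divisor by a positive multiple plus a nef correction can only shrink its augmented base locus. That monotonicity, together with the invariance of $\Bbb_+$ under $\R$-linear equivalence, is the right engine and is presumably what underlies the cited lemma as well, so your argument is a self-contained alternative rather than a shortcut. The globalization via Proposition~\ref{B_+genericfiber} and Remark~\ref{remmodel} is the same as the paper's.

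One small slip worth flagging: with the convention implicit in the definition \eqref{defHeight+-} --- namely $(f^{\pm 1})^* D^\pm \sim_\R \lambda_\pm D^\pm$, so that the limits there converge --- the action of $f^*$ on $D^-$ is $f^* D^- \sim_\R \lambda_-^{-1} D^-$, not $\lambda_- D^-$. Thus $f_\eta^* D_\eta \sim_\R \lambda_+ D_\eta^+ + \lambda_-^{-1} D_\eta^-$, and your ``assume WLOG $\lambda_+ \geq \lambda_-$'' is unnecessary: since $\lambda_+ > 1 > \lambda_-^{-1}$ always holds for a hyperbolic automorphism, both nef corrections in
\begin{align*}
f_\eta^* D_\eta &\sim_\R \lambda_-^{-1} D_\eta + (\lambda_+ - \lambda_-^{-1}) D_\eta^+, \\
D_\eta &\sim_\R \lambda_+^{-1} f_\eta^* D_\eta + \bigl(1 - \lambda_+^{-1}\lambda_-^{-1}\bigr) D_\eta^-
\end{align*}
have strictly positive coefficients unconditionally, and your monotonicity lemma gives $\Bbb_+(f_\eta^* D_\eta) = \Bbb_+(D_\eta)$ with no case split needed.
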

\begin{proof}
    The invariance of $\Bbb_+(D_\eta)$ is exactly \cite[Lemma 2.16]{LesieutreSatriano21}. By Proposition~\ref{B_+genericfiber}, we also obtain the invariance of $\Bbb_+(D)\cap X$.
\end{proof}

\begin{prop}\label{propsparsity}
    Let $\pi: X \to \Lambda$ be non-birationally isotrivial. Then, the union of images of stable marked points is not Zariski dense in $X$. Equivalently, the $\C(B)$-points --- which are generic points of the images of stable marked points --- are not Zariski dense in the generic fiber $X_\eta$.
\end{prop}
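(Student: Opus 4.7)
I would argue by contradiction: suppose that the union of images of stable marked points is Zariski dense in $X$. The goal is to produce from this assumption a birational conjugation between the general fibers of $(\pi,f)$, contradicting non-birational-isotriviality.

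\emph{Step 1: Boundedness of stable sections outside $\Bbb_+(D)$.}
By Proposition~\ref{stablebound}, every stable $\sigma$ satisfies $h(f^n\circ\sigma)\leq 2C_f$ for all $n\in\Z$. Applying Proposition~\ref{uniformbound}(2) and then (3) to $f^n\circ\sigma$ (for $\sigma(B)\not\subset\Bbb_+(D)$, which by Lemma~\ref{corB_+} is preserved under iteration), I obtain a uniform bound on $f^n(\sigma(B))\cdot A^b$, independent of $\sigma$ and $n$. This bounds the Hilbert polynomial of the graph $\Gamma_\sigma\subset B\times\overline{X}$, so the set $\mathcal{S}_0$ of stable sections with $\sigma(B)\not\subset\Bbb_+(D)$ lies in a finite-type reduced closed subscheme of $\mathrm{Rat}(B,\overline{X})_{\mathrm{red}}$. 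Let $R$ be its Zariski closure.

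\emph{Step 2: Dominant evaluation.}
The rational self-map $\Phi_f\colon\mathrm{Rat}(B,\overline{X})\dashrightarrow\mathrm{Rat}(B,\overline{X})$, $\sigma\mapsto f\circ\sigma$, preserves both stability and the condition $\sigma(B)\not\subset\Bbb_+(D)$ (using Lemma~\ref{corB_+}), hence preserves $\mathcal{S}_0$ and therefore $R$. Since $R$ has finitely many irreducible components, after replacing $f$ by an iterate each component is $\Phi_f$-invariant. Remark~\ref{remmodel} gives $\Bbb_+(D)\cap X\subsetneq X$, so the contradiction hypothesis forces the evaluation $\mathrm{ev}\colon R\times B\dashrightarrow X$, $(\sigma,b)\mapsto\sigma(b)$, to be dominant; some irreducible component $R_0\subseteq R$ already has dominant evaluation. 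Writing $F\coloneqq\Phi_f|_{R_0}\colon R_0\dashrightarrow R_0$, the identity $\mathrm{ev}\circ(F\times\id_B)=f\circ\mathrm{ev}$ gives a semiconjugacy between $F\times\id_B$ and $f$.

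\emph{Step 3: Extracting birational isotriviality.}
Base-changing to $\C(B)$, the semiconjugacy becomes a dominant rational $\C(B)$-morphism $\Psi\colon(R_0)_{\C(B)}\dashrightarrow X_\eta$ intertwining the constant map $F_{\C(B)}$ with $f_\eta$. Since $\mathrm{ev}$ is dominant we have $\dim R_0\geq\dim X_\eta$; cutting $R_0$ down to an $F$-invariant irreducible subvariety $R_0'\subseteq R_0$ of dimension $\dim X_\eta$ yields a generically finite dominant $\Psi'\colon(R_0')_{\C(B)}\dashrightarrow X_\eta$ that still intertwines $F|_{R_0'}$ with $f_\eta$. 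Taking the Galois closure $K/\C(B)$ of the corresponding finite extension and spreading $\Psi'$ out over a smooth projective cover $B'\to B$ corresponding to $K$ produces a birational $B'$-isomorphism between $R_0'\times B'$ and $X\times_B B'$ conjugating the \emph{constant} family $F\times\id_{B'}$ with the pulled-back family. It follows that for general $t_1,t_2\in\Lambda(\C)$, $f_{t_1}$ and $f_{t_2}$ are both birationally conjugate to $F|_{R_0'}$, hence to one another, contradicting non-birational-isotriviality.

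The main obstacle is the construction of the $F$-invariant slice $R_0'\subseteq R_0$ of the correct dimension in Step~3: a general linear slice need not be $F$-invariant when $\dim R_0>\dim X_\eta$, and one must be careful that the finite cover $\Psi'$ can be trivialized by a base change of $B$ compatibly with the $F$-action. A natural remedy is to replace $R_0$ by an $F$-invariant irreducible component of the Stein factorization of $\mathrm{ev}$; verifying that such a component exists with dominant evaluation will use both the finite-type property from Step~1 and the density in $R_0$ of honest stable sections on which $F$ is well-defined and preserves stability.
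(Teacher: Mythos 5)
Your Steps 1--2 essentially reconstruct the paper's setup: bound the iterates of a stable section using Propositions~\ref{stablebound} and~\ref{uniformbound}, parametrize by a finite-type piece of $\mathrm{Rat}(B,\overline{X})_{\mathrm{red}}$ (the paper works instead with the Chow preimage $\Gamma_b=\gamma_b^{-1}(\Cfra_\infty)$, which has the advantage of being proper), and deduce that some irreducible component has dominant evaluation map. The problem is Step~3, and it is not the cosmetic obstacle you flag at the end.

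The genuine gap is that you never show the evaluation map has degree \emph{one}, and your remedy cannot produce that. If $\Psi'\colon (R_0')_{\C(B)}\dashrightarrow X_\eta$ is generically finite dominant of degree $d$, this means the extension $\C(B)(R_0')/\C(B)(X_\eta)$ has degree $d$; a base change $B'\to B$ (i.e.\ replacing $\C(B)$ by a finite extension) leaves that degree unchanged, so taking the Galois closure and ``spreading out'' does not turn $\Psi'$ into a birational map, and the resulting semiconjugacy only exhibits $f_\eta$ as a degree-$d$ quotient of a constant map --- not as birationally conjugate to a constant automorphism. The paper bypasses this by proving directly that $\overline{p}_X$ is generically injective (degree one), and that step is exactly where hypothesis $(\mathrm{H}2)$ enters: through any saddle periodic point $x_0\notin\Bbb_+(D)$ there is at most one stable marked point. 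The argument is local-analytic: compactness of $\Cfra_\infty$ gives a limit cycle $\sigma_\infty$ of the iterates $f^n\circ\sigma$, and if $\sigma(t)$ were off the local stable (resp.\ unstable) manifold of $\sigma_0(t)$, then $\sigma_\infty$ would acquire a vertical component through $x_0\notin\Bbb_+(D)$, contradicting the structure of cycles in $\Cfra_\infty$ (Remark~\ref{remarkHilbert}); hence $\sigma=\sigma_0$. Density of saddle points then forces degree one, and only then does one get a birational $V_{t_1}\dashrightarrow V_{t_2}$ by composing through $Z$. Your proposal never invokes $(\mathrm{H}2)$ or saddle points, which is why it cannot close: the statement is false without $(\mathrm{H}2)$, and the missing idea is precisely this dynamical uniqueness-at-a-saddle argument, not the invariant-slice issue you identified.
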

\begin{proof}
Denote by $p$ the projection $p: B\times \overline{X} \to \overline{X}.$
Denote by $\gamma_b$ the composition
\begin{align*}
    \gamma_b: \mathrm{Rat}(B,\overline{X})_{\mathrm{red}}\overset{\Theta_b(B,\overline{X})}{\longrightarrow} \Cscr_b(B\times \overline{X})\overset{p_*}{\longrightarrow}\Cscr_b(\overline{X}).
\end{align*}
Let $\Gamma_b \coloneqq \gamma_b^{-1}(\Cfra_\infty).$ Denote by $q$ the projection $q: B\times X\times \Gamma_{b,\mathrm{red}} \to X\times \Gamma_{b,\mathrm{red}}$. Then $q$ induces a proper family $$\overline{q}: q(\Xscr(\Gamma_{b,\mathrm{red}}))\subset \overline{X}\times \Gamma_{b,\mathrm{red}} \to \Gamma_{b,\mathrm{red}}.$$ 

Suppose the union of the images of stable marked points is not Zariski dense in $X$. Then, there exists an irreducible component $Z$ of $\Gamma_{b,\mathrm{red}}$ such that the projection $\overline{p}_{\overline{X}}\colon \Xscr(Z) \subset \overline{X}\times Z\to \overline{X} $ is dominant. 

Let us show that $\overline{p}_X$ is generically finite of degree one, thereby proving it is birational. This argument is inspired by \cite{DujardinLyubich,GVhenon}. Let $x_0\notin \Bbb_+(D)$ be a $f_{t_0}$-saddle periodic point in the fiber $X_{t_0}$, where $t_0\in \Lambda(\C)$. By Lemma~\ref{corB_+}, up to taking an iterate of $f$, we may assume it is a fixed point. By the implicit function theorem, there exists a small analytic open subset $U$ containing $t_0$ and a holomorphic section $\sigma_0: U \to X_U(\C)$ such that $\sigma_0(t)$ is an $f_t$-fixed saddle point for all $t\in U.$ Let $u_1(t), \cdots , u_k(t)$ be the eigenvalues of $f_t$ at $\sigma_0(t)$ that lie outside the closed unit disk, counted with multiplicities $n_1,\cdots, n_k$. Similarly,  let $s_1(t), \cdots , s_l(t)$ be the eigenvalues of $f_t$ at $\sigma_0(t)$ that lie inside the open unit disk, counted with multiplicities $m_1,\cdots, m_l$. These eigenvalues depend holomorphically on $t.$ 

Thus, by a holomorphic coordinate change, we can reduce to the following situation.
\begin{enumerate}
    \item We have a projection $\pi :(\C^{\dim X -\dim B}\times \C^{\dim B}, 0) \to (\C^{\dim B}, 0) $.
    \item We have a biholomorphic function $f_t$ of $(\C^{\dim X-\dim B})$ parameterized by $t\in (\C^{\dim B}, 0)$ such that \begin{enumerate}
        \item $f_t(0)=0$.
        \item the linear part of $f_t$ is the multiplication by a Jordan matrix $J$ such that its diagonal elements are ordered as $u_1(t),\cdots,u_k(t),s_1(t),\cdots,s_l(t)$, counted with multiplicities.
    \end{enumerate}
\end{enumerate}
Denote $N_u\coloneqq \sum_{i=1}^k n_i$. We call the set $\left\{(z_1,\cdots,z_{N_u},0,\cdots,0)\in \C^{\dim X -\dim B} \mid z_i \in (\C,0) \right\}$ the \emph{local unstable manifold} and the set $\left\{(0,\cdots,0,z_{N_u+1},\cdots,z_{\dim X -\dim B})\in \C^{\dim X -\dim B} \mid z_i \in (\C,0) \right\}$ the \emph{local stable manifold}. A simple but important observation is that the intersection of the local stable and unstable manifolds is the origin.

Let $\sigma$ be a stable marked point whose image passes through $x_0$. By proposition~\ref{uniformbound} and Lemma~\ref{corB_+},the iterates $f^n\circ\sigma$, for $n\in \Z$, are all stable, and thus belong to the set $\Cfra_\infty,$ which is compact. Consequently, there is a limit point $\sigma_\infty \in \Cfra_\infty$. We claim that $\sigma_\infty=\sigma_0$. To see this, suppose for a general $t\in U$, $\sigma(t)$ is not on the local stable manifold. Then the cycle $\sigma_\infty$ would have a vertical component passing through $x_0\not\in \Bbb_+(D)$, which contradicts Remark~\ref{remarkHilbert}. By symmetry, for a general $t\in U$, $\sigma(t)$ must also be on the local unstable manifold. Thus, $\sigma_\infty$ must be $\sigma_0$, as claimed.

We have established that there exists at most one stable marked point passing through a saddle periodic point in $X$. By the Zariski density of saddle points, we deduce that $\overline{p}_X$ is generically finite of degree one, and hence birational. Let $V\subset X$ be a Zariski open subset such that the restriction 
\begin{align*}
    \overline{p}_V\coloneqq \overline{p}_X|_{{\overline{p}_X}^{-1}(V)}:{\overline{p}_X}^{-1}(V)\subset V\times Z \to V
\end{align*}
is an isomorphism. In particular, if $t\in \pi(V)$, setting $\Xscr(Z)_t\coloneqq \Xscr(Z)\bigcap \left(V_t \times Z\right)$, then the map 
\begin{align*}
    \Xscr(Z)_t \subset V_t\times Z \overset{\overline{q}}{\longrightarrow} Z
\end{align*}
is injective. Now for any two $t_1,t_2\in \pi(V)(\C),$ the map
\begin{align*}
    V_{t_1} \overset{\overline{p}_V^{-1}}{\longrightarrow} \Xscr(Z)_{t_1} \overset{\overline{q}}{\longrightarrow} \overline{q}(\Xscr(Z)_{t_1})\cap\overline{q}(\Xscr(Z)_{t_2}) \overset{\overline{q}^{-1}}{\longrightarrow} \Xscr_{t_2} \overset{\overline{p}_V}{\longrightarrow} V_{t_2}
\end{align*}
is birational.
\end{proof}

\begin{cor}\label{corK3}
    Let ($\pi,f$) be a family of hyperbolic automorphisms of smooth complex projective surfaces. Let $\sigma$ be a marked point. If $\sigma(B)\subset \Bbb_+(D)$, then $\hhat_f(\sigma)=0$. If $\sigma(B)\not\subset \Bbb_+(D)$ and the family is non-birationally isotrivial, then $\hhat_f(\sigma)=0$ if and only if $\sigma$ is periodic. If for a general fiber, its automorphism group equals to its group of birational maps (e.g., a K3 surface), 
    we can relax the condition to require only that the family is non-isotrivial.
\end{cor}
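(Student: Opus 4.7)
The plan combines three ingredients: the description of $\Bbb_+(D)$ on a surface (Example~\ref{exampleB_+}), a finite-order argument for the action on $\mathrm{Pic}^0$ of a periodic curve, and the Kawaguchi--Silverman-type statement Theorem~\ref{thm:degree}.

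For the first assertion, I observe that, by Remark~\ref{remmodel} and Example~\ref{exampleB_+}, $\sigma(B)$ lies in a horizontal irreducible component $P$ of $\Bbb_+(D)\cap X$ whose generic fiber $P_\eta$ is a union of $f_\eta$-periodic curves $C_1,\dots,C_r$. Fix $N\geq 1$ with $f^N(C_i)=C_i$ for every $i$. Using $(f^N)^{*}D^{\pm}\sim_{\R}\lambda_{\pm}^N D^{\pm}$ together with the projection formula gives $D^{\pm}_\eta\cdot C_i=0$, so $D^{\pm}_\eta|_{C_i}$ is a degree-zero $\R$-divisor on $C_i$. Since $\mathrm{Aut}(C_i)$ acts on $\mathrm{Pic}^0(C_i)$ through the finite group $\mathrm{Aut}(\mathrm{Pic}^0(C_i),0)$, some iterate $(f^{NN'}|_{C_i})^{*}$ acts trivially on $\mathrm{Pic}^0(C_i)$; combining this with $(f^{NN'})^{*}D^{\pm}\sim_{\R}\lambda_{\pm}^{NN'}D^{\pm}$ and $\lambda_{\pm}^{NN'}\neq 1$ forces $D^{\pm}_\eta|_{C_i}\sim_{\R}0$. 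Thus, on a suitable model, $D^{\pm}|_{P}$ is $\R$-equivalent to a sum of vertical divisors of the fibration $P\to B$. Since the $f$-orbit of $P$ consists of only finitely many components, pulling back by $f^n\circ \sigma$ and intersecting with $\pi^{*}(M)^{b-1}$ produces a quantity bounded uniformly in $n$. Hence $\{h(f^n\circ \sigma)\}_{n\in \Z}$ is bounded, and Proposition~\ref{stablebound} yields $\hhat_f^{\pm}(\sigma)=0$, so $\hhat_f(\sigma)=0$.

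For the second assertion, the direction ``$\sigma$ periodic $\Rightarrow \hhat_f(\sigma)=0$'' is immediate from Proposition~\ref{stablebound}. Conversely, suppose $\sigma$ is stable with $\sigma(B)\not\subset \Bbb_+(D)$, and let $W\subset X$ be the Zariski closure of $\bigcup_{n\in \Z}f^n(\sigma(B))$. By Propositions~\ref{stablebound} and~\ref{uniformbound}, $\alpha_f(\sigma)=1<\lambda_+$, so Theorem~\ref{thm:degree} rules out a dense orbit, giving $W\neq X$. Choose an irreducible component $W'$ of $W$ containing $\sigma(B)$: it is horizontal and $f^{N}$-invariant for some $N$. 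Its generic fiber $W'_\eta$ is a proper $f^{N}_\eta$-invariant subvariety of the surface $X_\eta$, hence either (i) a curve, necessarily an $f_\eta$-periodic curve and thus contained in $\Bbb_+(D_\eta)$ by Example~\ref{exampleB_+}, contradicting $\sigma(B)\not\subset \Bbb_+(D)$; or (ii) a point, in which case $\dim W'=b=\dim\sigma(B)$ and the irreducibility of $W'$ forces $\sigma(B)=W'$, giving $f^{N}(\sigma(B))=\sigma(B)$ and $\sigma$ is $N$-periodic. For the third assertion, when $\mathrm{Aut}$ coincides with $\mathrm{Bir}$ on a general fiber (e.g., for K3 surfaces), the two notions of isotriviality coincide, so non-isotriviality already yields non-birational-isotriviality and the preceding argument applies unchanged.

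The main technical obstacle is the step in the first assertion that converts the $\R$-triviality of $D^{\pm}_\eta|_{C_i}$ into a uniform bound on $h(f^n\circ \sigma)$: one must carefully keep track of the contributions from the vertical divisors of the curve fibrations $f^i(P)\to B$ and from the finitely many possible horizontal components. This is the analogue, in the function-field setting, of the additional work carried out in \cite{KawaguchiSurface08}.
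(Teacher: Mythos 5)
Your proposal follows the same overall strategy as the paper (handle the $\Bbb_+$ case via triviality of $D|_C$ on periodic curves; handle the complement case by sparsity of stable marked points plus the classification of invariant subvarieties from Example~\ref{exampleB_+}), but both parts contain real issues.

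\textbf{First assertion.} You attempt to reprove, from scratch, the fact that $D^\pm_\eta|_{C_i}\sim_\R 0$, by arguing that the image of $\mathrm{Aut}(C_i)$ in $\mathrm{Aut}(\mathrm{Pic}^0(C_i),0)$ is finite. This is fine when $C_i$ is smooth of genus $0$, $1$ or $\geq 2$. But $f$-periodic curves on a hyperbolic surface automorphism need not be smooth, and for a cuspidal rational curve $\mathrm{Pic}^0(C_i)\cong\mathbb{G}_a$, on which the automorphism group of the pointed curve can act by nontrivial scalars, so no iterate need act trivially on $\mathrm{Pic}^0$. Your appeal to ``finiteness of $\mathrm{Aut}(\mathrm{Pic}^0(C_i),0)$'' therefore does not close this case. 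You acknowledge the gap yourself at the end; the paper avoids it simply by quoting Lemma~5.3 of \cite{KawaguchiSurface08}, which is precisely the statement that $L|_{C_t}$ is trivial on every fiber of a periodic curve. Either cite that lemma directly or supply the extra argument for singular components; as written, the first assertion is not proved.

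\textbf{Second assertion.} You rule out a dense orbit by computing $\alpha_f(\sigma)=1$ and invoking Theorem~\ref{thm:degree}. This is circular: the proof of Theorem~\ref{thm:degree} (Corollary~\ref{cor:KSconj}) explicitly cites Corollary~\ref{corK3} to get $\hhat_f(\sigma)\neq 0$. The paper instead invokes Proposition~\ref{propsparsity} directly: a stable marked point not contained in $\Bbb_+(D)$ has all its iterates stable and not in $\Bbb_+(D)$ (by Lemma~\ref{corB_+}), and by Proposition~\ref{propsparsity} the union of images of stable marked points is not Zariski dense, so the orbit closure is a proper $f_\eta$-invariant subset. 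Replacing your use of Theorem~\ref{thm:degree} by Proposition~\ref{propsparsity} repairs this step and matches the paper's argument. The remainder of your case analysis (the orbit closure cannot be a curve by Example~\ref{exampleB_+}, so it is finite and $\sigma$ is periodic) agrees with the paper, and the third assertion is fine.
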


\begin{proof}
    Let $x\in X_\eta$ be the generic point of $\sigma(B)$ in the generic fiber of $\pi.$ Suppose $x\in \Bbb_+(D_\eta)$. Let $C_\eta \subset \Bbb_+(D_\eta)$ be a curve containing $x$. Consequently We have a family of curves $\pi_C: C\subset \overline{X} \to B$ contained within $\Bbb_+(D)$, where $C$ is the Zariski closure of $C_\eta$. After possibly reducing $\Lambda$ and taking an iterate of $f$, we can assume that $C_t$ is fixed by $f_t$ and smooth, for all $t\in \Lambda$. By \cite[Lemma 5.3]{KawaguchiSurface08}, the restriction of the line bundle $L|_C$ to all fibers $C_t, t\in \Lambda(\C)$, is trivial. Therefore, there exists a vertical divisor $V_C$ on $C$ such that $L|_C = \Ocal_C(V_C)$. Now, given a marked point $\sigma$ such that $\sigma(B)\subset C$, we have $ h(\sigma)=\sigma(B) \cdot L|_C = \sigma(B) \cdot \Ocal_C(V_C)$. As in the proof of Proposition~\ref{constructionHeight}, this is bounded by a constant independent of $\sigma,$ implying $\hhat_f(\sigma)=0.$
    
    Now, suppose $x\notin \Bbb_+(D_\eta)$. By Proposition~\ref{B_+genericfiber} and Lemma~\ref{corB_+}, for any $n\in \Z$, $f_\eta^n(x)\not\in \Bbb_+(D_\eta)$. Furthermore, by Proposition~\ref{propsparsity} and the non-birational isotriviality, the orbit of $x$ under $f_\eta$ is not Zariski dense in $X_\eta.$ Hence, the Zariski closure of the orbit is $f_\eta$-invariant. By Example~\ref{exampleB_+}, it is finite since can not be a curve.
\end{proof}

\begin{cor}\label{cor:KSconj}
    Let ($\pi$,$f$) be a family of hyperbolic automorphisms of smooth complex projective varieties satisfying (H1) and (H2). Suppose it is not birationally isotrivial (or only not isotrivial in the case of a family of K3 surfaces). Let $\sigma$ be a section with dense orbit. Then $\alpha_f(\sigma)=\lambda_+$.    
\end{cor}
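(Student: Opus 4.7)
The plan is to combine Proposition~\ref{propsparsity} with the comparison estimates of Proposition~\ref{uniformbound}: the former forces $\hhat_f^+(\sigma)$ to be strictly positive, while the latter lets us convert the exponential growth of the canonical height at $\sigma$ into growth of $h_A(f^n\circ \sigma)$.

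First I would show that $\hhat_f^+(\sigma)>0$. If instead $\hhat_f(\sigma)=0$, then by Corollary~\ref{corgap} both $\hhat_f^\pm(\sigma)$ vanish, and the functional equations $\hhat_f^\pm(f^n\circ\sigma)=\lambda_\pm^{\pm n}\hhat_f^\pm(\sigma)$ from Proposition~\ref{constructionHeight} give $\hhat_f(f^n\circ\sigma)=0$ for every $n\in\Z$. Thus every orbit point $f^n\circ\sigma$ is a stable marked point, so $\bigcup_{n\in\N}f^n(\sigma(B))$ lies inside the union of images of stable marked points, contradicting both the Zariski density of the orbit and Proposition~\ref{propsparsity}. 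The same density together with the $f$-invariance of $\Bbb_+(D)\cap X$ (Lemma~\ref{corB_+}) shows that $f^n(\sigma(B))\not\subset\Bbb_+(D)$ for every $n$, so every part of Proposition~\ref{uniformbound} can be applied to each iterate $f^n\circ\sigma$.

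Next I would set $h(\tau)\coloneqq\tau\cdot D\cdot\pi^*(M)^{b-1}$ and compare it with $h_A$. Chaining parts (1), (2), (3) of Proposition~\ref{uniformbound} produces positive constants $C_1,C_2$, independent of the marked point, such that
\begin{equation*}
    \tfrac{1}{C_1}\,h(\tau)\leq h_A(\tau)\leq C_2\,h(\tau)
\end{equation*}
for every $\tau$ with $\tau(B)\not\subset\Bbb_+(D)$. Applying Proposition~\ref{constructionHeight} to both signs gives $|\hhat_f(\tau)-h(\tau)|<2C_f$, and combining with $\hhat_f(f^n\circ\sigma)=\lambda_+^n\hhat_f^+(\sigma)+\lambda_-^{-n}\hhat_f^-(\sigma)$ yields
\begin{equation*}
    \tfrac{1}{C_1}\bigl(\lambda_+^n\hhat_f^+(\sigma)-2C_f\bigr)\leq h_A(f^n\circ\sigma)\leq C_2\bigl(\lambda_+^n\hhat_f^+(\sigma)+\lambda_-^{-n}\hhat_f^-(\sigma)+2C_f\bigr).
\end{equation*}
Taking $n$-th roots and letting $n\to+\infty$, and using $\hhat_f^+(\sigma)>0$ together with the boundedness of $\lambda_-^{-n}\hhat_f^-(\sigma)$ (since $\lambda_-\geq 1$), both outer bounds converge to $\lambda_+$. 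This simultaneously proves the existence of $\alpha_f(\sigma)=\lim_n h_A(f^n\circ\sigma)^{1/n}$ and the equality $\alpha_f(\sigma)=\lambda_+$.

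The substantive input is the first step, namely the positivity of $\hhat_f^+(\sigma)$; everything afterwards is bookkeeping with the height comparisons already in place. The non-birational-isotriviality hypothesis enters precisely through Proposition~\ref{propsparsity}, which is the geometric heart of the argument.
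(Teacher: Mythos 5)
Your proof is correct and follows essentially the same route as the paper's: establish $\hhat_f^+(\sigma)>0$ via Proposition~\ref{propsparsity} and Corollary~\ref{corgap}, then convert the exponential growth $\hhat_f^+(f^n\circ\sigma)=\lambda_+^n\hhat_f^+(\sigma)$ into growth of $h_A(f^n\circ\sigma)$ through the height comparisons of Propositions~\ref{constructionHeight} and~\ref{uniformbound}. One cosmetic difference: the paper only derives the lower bound $\underline{\alpha}_f(\sigma)\geq\lambda_+$ from the height inequalities and cites a separate lemma for $\overline{\alpha}_f(\sigma)\leq\lambda_+$, whereas you extract both bounds directly by chaining all three parts of Proposition~\ref{uniformbound}; this is fine, and is arguably more self-contained, since your argument also needs $f^n(\sigma(B))\not\subset\Bbb_+(D)$, which you correctly supply from orbit density and Lemma~\ref{corB_+}. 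One small omission: for the K3 alternative (assuming only non-isotrivial rather than non-birationally-isotrivial), Proposition~\ref{propsparsity} as stated does not directly apply; you need the observation from Corollary~\ref{corK3} that when $\mathrm{Aut}$ of a general fiber equals its group of birational maps, non-isotrivial already implies non-birationally isotrivial, which is why the paper cites Corollary~\ref{corK3} alongside Proposition~\ref{propsparsity}.
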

We require the following fundamental inequality, which is established in more general settings in \cite{zbMATH07946735,kawaguchiSilverman,MatsuzawaUpperbounds}. In our specific case, the proof follows directly from (the proof of) Proposition~\ref{uniformbound}.
\begin{lem}
    We have $\overline{\alpha}_f(\sigma) \leq \lambda_+$.
\end{lem}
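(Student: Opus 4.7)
The plan is to control $h_A(f^n\circ\sigma)=f^n(\sigma(B))\cdot A^b$ by chaining together the three parts of Proposition~\ref{uniformbound}, using the canonical heights $\hat{h}_f^\pm$ as the source of growth estimates. Since $h_A$ is built from an ample divisor while the canonical heights are built from the nef (but not necessarily ample) $D^\pm$, the chain must pass through the intermediate quantities $\tau\cdot D\cdot \pi^*(M)^{b-1}$ and $\tau\cdot D^b$ before returning to $A$.

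Setting $h^\pm(\tau)=\tau\cdot D^\pm\cdot\pi^*(M)^{b-1}$, so that $h=h^++h^-$, Proposition~\ref{constructionHeight} gives for every $n\geq 0$
\begin{align*}
h^+(f^n\circ\sigma)\leq \hat{h}_f^+(f^n\circ\sigma)+C_f=\lambda_+^n\,\hat{h}_f^+(\sigma)+C_f,
\end{align*}
and similarly $h^-(f^n\circ\sigma)\leq \lambda_-^{-n}\hat{h}_f^-(\sigma)+C_f$, which stays bounded. Summing the two, there exists $C_1>0$ independent of $n$ with $h(f^n\circ\sigma)\leq C_1\lambda_+^n$. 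Applying Proposition~\ref{uniformbound}(2) with $C_M=C_1\lambda_+^n$ yields
\begin{align*}
f^n(\sigma(B))\cdot D^b \leq C(M,D)\,C_1\,\lambda_+^n.
\end{align*}

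To pass from $D^b$ back to $A^b$, I want to apply Proposition~\ref{uniformbound}(3), which requires $f^n(\sigma(B))\not\subset \Bbb_+(D)$. Here I use that $D$ is big, so $\Bbb_+(D)$ is a proper Zariski closed subset, and that by Lemma~\ref{corB_+} its intersection with $X$ is $f$-invariant; therefore $\sigma(B)\not\subset \Bbb_+(D)$ forces $f^n(\sigma(B))\not\subset \Bbb_+(D)$ for every $n\in\Z$. In the setting of Corollary~\ref{cor:KSconj} this non-containment is free: the orbit of $\sigma$ being Zariski dense in $X$ prevents $\sigma(B)$ from lying inside the proper invariant set $\Bbb_+(D)$. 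Proposition~\ref{uniformbound}(3) then gives
\begin{align*}
h_A(f^n\circ\sigma)\leq C(D,A)\,C(M,D)\,C_1\,\lambda_+^n,
\end{align*}
and taking $n$-th roots with $n\to+\infty$ yields $\overline{\alpha}_f(\sigma)\leq \lambda_+$.

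The only delicate point is the application of Proposition~\ref{uniformbound}(3), as the argument would fail for a marked point whose image lies entirely in $\Bbb_+(D)$; in the ambient corollary this is ruled out by the dense-orbit assumption, so the lemma is exactly what is needed. Beyond that, the proof is a purely formal combination of the effective comparisons of Proposition~\ref{uniformbound} with the growth rate $\lambda_+^n$ extracted from the construction of $\hat{h}_f^+$, and uses no additional input from (H1) or (H2).
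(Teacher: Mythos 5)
Your proof is correct and follows the same route the paper intends by its one-line remark that the lemma ``follows directly from (the proof of) Proposition~\ref{uniformbound}'': you bound $h(f^n\circ\sigma)$ by $C_1\lambda_+^n$ via the invariance relations from Proposition~\ref{constructionHeight}, then chain parts (2) and (3) of Proposition~\ref{uniformbound} to transfer the bound to $h_A$. You also correctly flag the only subtlety---the need for $f^n(\sigma(B))\not\subset\Bbb_+(D)$ in part (3)---and justify it via Lemma~\ref{corB_+} and the dense-orbit hypothesis of the ambient Corollary~\ref{cor:KSconj}, which is exactly the context in which the paper uses this lemma.
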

\begin{proof}[Proof of Corollary~\ref{cor:KSconj}]
    By Proposition~\ref{propsparsity} and Corollary~\ref{corK3}, $\hat{h}_f(\sigma)\neq 0$. By the weak Northcott property over function fields Corollary~\ref{corgap}, $\hat{h}_f^+(\sigma)\neq 0$ either. The rest of the proof is standard and similar to the number field case, see e.g., \cite[Section 5]{matsuzawa2023recentadvanceskawaguchisilvermanconjecture}. In fact, by Proposition~\ref{constructionHeight}, we have
    \begin{align*}
        \underline{\alpha}_f(\sigma)\geq \liminf_{n\to +\infty}h(f^n\circ\sigma)^{1/n}\geq \liminf_{n\to+\infty}\left(\hat{h}^+_f(f^n\circ\sigma) \right)^{1/n} = \liminf_{n\to+\infty}\left(\lambda_+^n\hat{h}^+_f(\sigma) \right)^{1/n}=\lambda_+.
    \end{align*}
    Thus, combining with the previous lemma, the proof is complete.
\end{proof}

\section{Green currents}
\begin{prop}\label{constructionGreenCurrents}
    Let ($\pi,f$) be a family of hyperbolic automorphisms of smooth complex projective varieties. Assume hypothesis $(\mathrm{H}1)$. Then there exists a positive closed (1,1)-current $\hat{T}^\pm_f$ on $X(\C)$ with continuous local potential and such that $\frac{1}{\lambda_\pm}{(f^{\pm 1})}^*\hat{T}^\pm_f=\hat{T}^\pm_f$. It is unique in the sense that if there is another current $R^\pm$ satisfying the above properties and $R^\pm - \hat{T}^\pm_f$ is $\ddc$-exact, then $\hat{T}^\pm_f=R^\pm$. 
\end{prop}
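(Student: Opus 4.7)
The plan is to exhibit two a priori distinct closed $(1,1)$-currents $S^\pm$ and $R^\pm$ on $X(\C)$---each $\lambda_\pm^{-1}(f^{\pm 1})^*$-invariant, with $S^\pm$ obviously having continuous local potential and $R^\pm$ obviously positive---and then to identify them via the rigidity of Green currents (\cite[Th\'eor\`eme 2.4]{CantatK301}, \cite{DSJAG10}). The identification will give $\hat{T}^\pm_f \coloneqq R^\pm = S^\pm$ enjoying both properties.

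\emph{Construction of $S^\pm$ (continuous local potential).} Fix a smooth hermitian metric on $\Ocal_{\overline{X}}(D^\pm)$; its curvature $\omega_\pm$ is a smooth real $(1,1)$-form on $\overline{X}$ representing $c_1(D^\pm)$. By $(\mathrm{H}1)$, the smooth closed form $\tfrac{1}{\lambda_\pm}(f^{\pm 1})^*\omega_\pm - \omega_\pm$ on $X$ represents the zero class, and so equals $\ddc u_\pm$ for a smooth real function $u_\pm$ on $X(\C)$ (concretely, the logarithm of a nowhere-vanishing section on $X$ supplied by $(\mathrm{H}1)$). The crucial compactness observation: for any compact $K\subset X(\C)$, the set $K'\coloneqq \overline{\pi}^{-1}(\overline{\pi}(K))$ is compact in $\overline{X}(\C)$ by properness of $\overline{\pi}$, lies in $X(\C)$ since $\overline{\pi}(K)\subset\Lambda(\C)$, and is $f$-invariant since $f$ preserves fibers. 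Hence $u_\pm$ is bounded on $K'$ and $\|(f^{\pm n})^*u_\pm\|_{L^\infty(K')}=\|u_\pm\|_{L^\infty(K')}$ uniformly in $n$. The telescoping series
$$\hat{u}_\pm \coloneqq \sum_{n=0}^{\infty}\lambda_\pm^{-n}(f^{\pm n})^*u_\pm$$
therefore converges uniformly on compact subsets of $X(\C)$ to a continuous function, and $S^\pm\coloneqq \omega_\pm + \ddc \hat{u}_\pm$ is a closed $(1,1)$-current on $X(\C)$ with continuous local potential. The telescoping identity $\hat{u}_\pm = u_\pm + \lambda_\pm^{-1}(f^{\pm 1})^*\hat{u}_\pm$ yields $\lambda_\pm^{-1}(f^{\pm 1})^*S^\pm = S^\pm$.

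\emph{Construction of $R^\pm$ and identification.} Pick a K\"ahler form $\omega$ on $\overline{X}$. The smooth positive closed $(1,1)$-forms $R_n^\pm\coloneqq \lambda_\pm^{-n}(f^{\pm n})^*\omega$ have uniformly bounded mass on the $f$-invariant compacts $K'$: the spectral structure of $f_t^*$ on each $H^{1,1}(X_t,\R)$ (dominant eigenvalue $\lambda_\pm$ with eigendirection $[D^\pm_t]$) bounds the fiberwise mass, and properness of $\overline{\pi}$ globalizes the bound. A Banach--Alaoglu argument yields a weak subsequential limit $R^\pm$: positive, closed, and $\lambda_\pm^{-1}(f^{\pm 1})^*$-invariant. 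After rescaling $\omega$, the cohomology classes $[R^\pm]$ and $[S^\pm]$ in $H^{1,1}(X,\R)$ coincide, so $R^\pm - S^\pm = \ddc v$ for some (a priori distributional) $v$. Joint invariance gives $\ddc\bigl((f^{\pm 1})^* v - \lambda_\pm v\bigr) = 0$, and iteration yields that $v - \lambda_\pm^{-n}(f^{\pm n})^*v$ is pluriharmonic for every $n$. The rigidity theorems cited, applied fiberwise, supply the regularity on $v$ needed to conclude $\lambda_\pm^{-n}(f^{\pm n})^*v\to 0$ uniformly on $K'$; hence $v$ is a uniform limit of pluriharmonic functions, therefore pluriharmonic, giving $\ddc v = 0$ and $R^\pm = S^\pm=:\hat{T}^\pm_f$. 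The same iteration establishes uniqueness in the stated sense: if $R^\pm - \hat{T}^\pm_f = \ddc w$ for another such current, then $\ddc w = 0$.

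\emph{Main obstacle.} The delicate step is the identification of $R^\pm$ and $S^\pm$: $R^\pm$ has only distributional potential a priori, so the function $v$ with $R^\pm - S^\pm = \ddc v$ need not be bounded on $K'$ without invoking the rigidity of Green currents (Cantat for K3 surfaces, Dinh--Sibony in higher dimensions). Were $D^\pm$ ample, one could use a K\"ahler form directly in the class of $D^\pm$ and bypass this step entirely, as in the case of polarized endomorphisms; it is precisely the non-ampleness of $D^\pm$ that forces the present two-step construction.
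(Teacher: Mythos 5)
Your overall scheme is the same as the paper's: build one candidate current with continuous local potential via the telescoping series, build a second manifestly positive closed candidate as a limit of normalized pullbacks, then identify the two via the fiberwise rigidity of Green currents (Cantat for surfaces, Dinh--Sibony in higher dimension). The telescoping construction of $S^\pm$ and the final identification step ($v$ pluriharmonic on each fiber $\Rightarrow$ constant on fibers $\Rightarrow$ $\lambda_\pm^{-n}(f^{\pm n})^*v = \lambda_\pm^{-n}v\to 0$) match the paper. But the construction of $R^\pm$ has two genuine gaps.

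First, a bare weak subsequential limit of $R_n^\pm=\lambda_\pm^{-n}(f^{\pm n})^*\omega$ is positive and closed but not automatically $\lambda_\pm^{-1}(f^{\pm1})^*$-invariant: if $R_{n_k}^\pm\to R^\pm$ then $\lambda_\pm^{-1}(f^{\pm1})^*R^\pm$ is a limit of the shifted subsequence $R_{n_k+1}^\pm$, which need not converge to the same thing. The paper takes Ces\`aro means $\frac{1}{n}\sum_{j<n}\lambda_\pm^{-j}(f^{\pm j})^*T$ precisely to force invariance of the subsequential limit. Second, the mass bound for the $R_n^\pm$ is asserted by invoking a "spectral structure of $f_t^*$ on $H^{1,1}(X_t,\R)$" with $\lambda_\pm$ a simple dominant eigenvalue. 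That is known for surfaces and hyperk\"ahler fibers, but is not part of $(\mathrm{H}1)$; with a nontrivial Jordan block or another eigenvalue on the circle $|\cdot|=\lambda_\pm$, the norms $\|\lambda_\pm^{-n}(f_t^{\pm n})^*\|$ need not be bounded. And even granting a fiberwise bound, the cross-terms in $\int R_n^\pm\wedge\omega_A^{\dim X-1}$ mixing base and fiber directions are not controlled by the fiberwise intersection numbers, so "properness globalizes the bound" is not a proof. The paper avoids both problems by choosing the initial current $T$ in the class $c_1(L)$ of the eigendivisor (so its pulled-back class evolves exactly by the $(\mathrm{H}1)$-relation $f^*D^\pm|_\Lambda\sim_\R\lambda_\pm D^\pm|_\Lambda$) and by computing the mass on the compactified models $X_n$, where the discrepancy is the vertical divisor $V^\pm$ sandwiched between $\pm\pi_1^*N$; that yields an explicit uniform bound needing no extra hypotheses. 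Your "after rescaling $\omega$, the classes coincide" step inherits the same difficulty: on the non-compact $X$ it is not clear the classes $\lambda_\pm^{-n}(f^{\pm n})^*[\omega]$ converge, whereas with $T\in c_1(L)$ the class bookkeeping is automatic.
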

\begin{proof}
    Let $T$ be a positive closed current in the class $c_1(L)\in H^{1,1}(\overline{X},\R)$. 
    By \eqref{VerDiv},
    \begin{align*}
         \frac{1}{\lambda_\pm}{g^\pm_1}^*(L^\pm)=\varphi_1^* L^\pm -\Ocal_{X_1}(V^\pm).
    \end{align*}
    Set $L^\pm_V\coloneq \Ocal_{X_1}(V^\pm)$. Pulling back by $\frac{1}{\lambda_\pm}{g^\pm_2}^*$, we get
    \begin{align*}
        \frac{1}{\lambda^2_\pm}{g^\pm_2}^*{g^\pm_1}^*(L^\pm) &= \frac{1}{\lambda_\pm}\varphi_2^*{g^\pm_1}^* L^\pm - \frac{1}{\lambda_\pm} {g^\pm_2}^* L^\pm_V = \varphi_2^*\varphi_1^* L^\pm - \varphi_2^*L^\pm_V- \frac{1}{\lambda_\pm} {g^\pm_2}^* L^\pm_V
    \end{align*}
    By induction, we have
    \begin{align*}
    \frac{1}{\lambda_\pm^n}{g^\pm_n}^*\cdots {g^\pm_1}^{*}L^\pm =\varphi_n^{*}\cdots \varphi_1^{*}L^\pm + \sum_{j=0}^{n-1}\frac{1}{\lambda_\pm^j}\varphi_n^*\cdots \varphi_{j+2}^*{g^\pm_{j+1}}^*\cdots {g^\pm_2}^* L^\pm_V.
    \end{align*}
\end{proof}
Let $L_A$ be an ample line bundle on $\overline{X}$ and choose a K\"ahler form $\omega_A$ in the class $c_1(L_A)$. 
By \eqref{VerBound},
\begin{align*}
    \left|{g^\pm_n}^*\cdots {g^\pm_2}^* L^\pm_V \cdot \varphi_n^*\cdots \varphi_{1}^*L_A^{\dim X-1}\right|  < N\cdot L_A^{\dim X-1}.
\end{align*}
Thus
\begin{align}\label{bdmassine}
    \frac{1}{\lambda_\pm^n}{g^\pm_n}^*\cdots {g^\pm_1}^{*}L^\pm \cdot \varphi_n^*\cdots \varphi_{1}^*L_A^{\dim X-1} \leq L^\pm\cdot L_A^{\dim X-1} + \frac{\lambda_\pm}{\lambda_\pm -1} N\cdot L_A^{\dim X-1}.
\end{align}
The mass of $\frac{1}{\lambda_\pm^n}{(f^{\pm n})}^{*}(T)$ on $X$ is 
\begin{align*}
    \int_{X}\frac{1}{\lambda_\pm^n}{(f^{\pm n})}^*T \wedge \omega_A^{\dim X-1} &= \int_{X_{n,\Lambda}} \frac{1}{\lambda_\pm^n}\varphi_n^{*}\cdots \varphi_1^{*}{(f^{\pm n})}^{*}(T)\wedge \varphi_n^{*}\cdots \varphi_1^{*}\omega_A^{\dim X-1}\\
    &\leq \int_{X_{n}} \frac{1}{\lambda_\pm^n}{g^\pm_n}^*\cdots {g^\pm_1}^{*}(T)\wedge \varphi_n^{*}\cdots \varphi_1^{*}\omega_A^{\dim X-1}\\
    &= \frac{1}{\lambda_\pm^n}{g^\pm_n}^*\cdots {g^\pm_1}^{*}L \cdot \varphi_n^{*}\cdots \varphi_1^{*}L_A^{\dim X -1},
\end{align*}
which is uniformly bounded by \eqref{bdmassine}.
Hence there exists an subsequence of $\frac{1}{n}\sum_{j=0}^{n-1}\frac{1}{\lambda^j}{(f^{\pm j})}^{*}(T)$ which converges to a positive closed current $\Tilde{T}_f^\pm$ such that $\frac{1}{\lambda_\pm}{f^{\pm}}^{*}\Tilde{T}_f^\pm = \Tilde{T}_f^\pm.$

Now, choose a smooth form $\omega^\pm$ in the class $c_1(L^\pm)$. Again by the equation \eqref{VerDiv}, there exists a smooth real function $v_0$ on $X$ such that
\begin{align*}
    \frac{1}{\lambda_\pm}{(f^{\pm 1})}^* \omega^\pm = \omega^\pm +\ddc v_0.
\end{align*}
Pulling back by $\frac{1}{\lambda_\pm^n}{(f^{\pm 1})}^*$, we have by induction that
\begin{align*}
    \frac{1}{\lambda_\pm^n}{(f^{\pm n})}^* \omega^\pm = \omega^\pm +\ddc \sum_{j=0}^{n-1} \frac{v_0\circ f^{\pm j}}{\lambda_\pm}.
\end{align*}
Taking the limit $n\to +\infty$, we have
\begin{align*}
    \hat{T}^\pm_f = \omega^\pm + \ddc v^\pm,
\end{align*}
where $\hat{T}^\pm_f \coloneqq \lim_n \frac{1}{\lambda_\pm^n}{(f^{\pm n})}^* \omega^\pm$ is a closed current and $v^\pm$ is a continuous function on $X.$ Similarly, there exist a continuous function $u^\pm$ such that
\begin{align*}
    \lim_{n\to +\infty} \frac{1}{\lambda_\mp} {(f^{\mp n})}^* \omega^\pm = \ddc u^\pm.
\end{align*}

Since $T$ and $\omega^++\omega^-$ are in the same class $c_1(L)$, there exists a quasi-plurisubharmonic function $h^\pm_0$ on $X$ such that
\begin{align*}
    T = \omega^+ +\omega^- + \ddc h^\pm_0. 
\end{align*}
Pulling back by $\frac{1}{n}\sum_{j=0}^{n-1}\frac{1}{\lambda^j}{(f^{\pm j})}^{*}$ and taking the limit $n_j \to +\infty$, we obtain
\begin{align}\label{TildeHat}
    \Tilde{T}^\pm_f = \hat{T}^\pm_f + \ddc h^\pm,
\end{align}
where $h^\pm$ is a quasi-plurisubharmonic function on $X.$ 
By restriction to a fiber $X_t, t\in \Lambda(\C)$, we have 
\begin{align*}
    \Tilde{T}_{f^\pm_t} = \hat{T}_{f^\pm_t} + dd^c h^\pm_t.
\end{align*}
Moreover, since both $\Tilde{T}_{f^\pm_t}$ and $\Tilde{T}_{f^\pm_t}$ are $1/\lambda_\pm {(f_t^\pm)}^*$-invariant, they are actually equal by~\cite[Th\'eor\`eme 2.4]{CantatK301} and~\cite[Theorem]{DSJAG10}. In other words, $h^\pm_t$ is harmonic, and thus constant. This implies that $\frac{h^\pm\circ f^{\pm n}}{\lambda_\pm^n}$ converge locally uniformly to zero. Therefore, $\Tilde{T}^\pm_f = \hat{T}^\pm_f$.

\section{Mass equals to height}
In this section we suppose $\dim B=1$.
Taking an ample divisor with support $B\setminus \Lambda$, we have an open immersion $\Lambda \to \mathbb{A}^N$ for some $N\in \N.$ Following \cite{gauthier2020geometric} we use a suitable DSH \cite{DSActa09} test function
\begin{align*}
    \Psi_r(t):=\frac{\log\max\left(|t|,e^{2r}\right)- \log\max\left(|t|,e^r\right)}{r}
\end{align*}
to estimate the mass loss. Note that        $$T_r:=dd^c\left(\log\max(|t|,e^r))\right)$$ is a positive closed current with finite mass independent of the radius $r>0.$
Let $\omega$ be any smooth form representing the class $c_1(L)$.
\begin{lem}\label{DegeEstimate}
    There exist positive constants $C_1,C_2>0$ such that for all $n\geq 1$, we can write
    \begin{align*}
        \hat{T}^{\pm}_f - \frac{1}{\lambda_\pm^n} {(f^{\pm n})}^{*}\omega = dd^c u^{\pm}_n,
    \end{align*}
    where $|u^{\pm}_n(x)|\leq \frac{1}{\lambda_\pm^n}(C_1 \log^+ |\pi(x)| + C_2)$, for all $x\in X(\C)$.
\end{lem}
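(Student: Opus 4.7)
The plan is to exhibit an explicit quasi-potential $u^\pm_n$ directly from the inductive construction in Proposition~\ref{constructionGreenCurrents} and then bound it using a logarithmic estimate on the base potential $v_0$ appearing there.

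First I would specialize to $\omega = \omega^\pm \in c_1(L^\pm)$ as chosen in Proposition~\ref{constructionGreenCurrents}. Iterating the relation $\lambda_\pm^{-1}(f^{\pm 1})^*\omega^\pm = \omega^\pm + \ddc v_0$ gives
\begin{align*}
\frac{1}{\lambda_\pm^n}(f^{\pm n})^*\omega^\pm = \omega^\pm + \ddc\sum_{j=0}^{n-1}\frac{v_0\circ f^{\pm j}}{\lambda_\pm^j},
\end{align*}
and combining with the identity $\hat{T}^\pm_f = \omega^\pm + \ddc \sum_{j\geq 0}\lambda_\pm^{-j}(v_0\circ f^{\pm j})$ from the same proof immediately yields
\begin{align*}
\hat{T}^\pm_f - \frac{1}{\lambda_\pm^n}(f^{\pm n})^*\omega^\pm = \ddc u^\pm_n, \qquad u^\pm_n := \sum_{j=n}^{\infty}\frac{v_0\circ f^{\pm j}}{\lambda_\pm^j}.
\end{align*}
For an arbitrary smooth representative $\omega$ of $c_1(L^\pm)$, one writes $\omega-\omega^\pm = \ddc\tilde h$ with $\tilde h$ smooth on the compact $\overline{X}$; this adds the uniformly bounded correction $-\lambda_\pm^{-n}(\tilde h\circ f^{\pm n})$ to $u^\pm_n$, which is absorbed into the final constant $C_2$.

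The key analytic step is then the pointwise bound
\begin{align*}
|v_0(x)|\leq c_1\log^+|\pi(x)|+c_2 \quad \text{for every } x\in X(\C).
\end{align*}
I would prove this by lifting the defining equation to the model $X_1$ from Proposition~\ref{constructionHeight}. By~\eqref{VerDiv}, the smooth form $\lambda_\pm^{-1}(g_1^\pm)^*\omega^\pm - \varphi_1^*\omega^\pm$ lies in the cohomology class $-[V^\pm]$ on $X_1$, where $V^\pm$ is the vertical divisor supported above $B\setminus\Lambda$. Choosing a smooth hermitian metric $\|\cdot\|$ on $\Ocal_{X_1}(V^\pm)$ with canonical section $s_{V^\pm}$, the Lelong--Poincar\'e equation gives a global decomposition $v_0 = h_\mathrm{sm} + \log\|s_{V^\pm}\|$ on $X_{1,\Lambda}=X$, with $h_\mathrm{sm}$ smooth on compact $X_1$, hence bounded. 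Since $\|s_{V^\pm}\|$ is bounded above on $X_1$ and vanishes only along $V^\pm\subset \pi_1^{-1}(B\setminus\Lambda)$, a local coordinate comparison around each point of $B\setminus\Lambda$ shows that $\|s_{V^\pm}(x)\|\geq c|\pi(x)|^{-m}$ for some $c,m>0$, uniformly in $x$ by compactness of $B\setminus\Lambda$. Combined with the bound on $h_\mathrm{sm}$, this gives the claimed estimate.

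Finally, since $f$ is fiber-preserving we have $\pi\circ f^{\pm j}=\pi$ on $X$, so $|v_0(f^{\pm j}(x))|\leq c_1\log^+|\pi(x)|+c_2$ for every $j\geq 0$ and $x\in X(\C)$. Summing the resulting geometric series gives
\begin{align*}
|u^\pm_n(x)|\leq \frac{\lambda_\pm}{\lambda_\pm-1}\cdot\frac{1}{\lambda_\pm^n}\bigl(c_1\log^+|\pi(x)|+c_2\bigr),
\end{align*}
which is the lemma with $C_1 = c_1\lambda_\pm/(\lambda_\pm-1)$ and $C_2$ adjusted to also absorb $\|\tilde h\|_\infty$. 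The main obstacle is the uniform polynomial comparison between $\|s_{V^\pm}\|$ and $|\pi|^{-1}$ near $B\setminus\Lambda$; every other step is a direct manipulation of the material built in Proposition~\ref{constructionGreenCurrents}.
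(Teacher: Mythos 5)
Your proof follows essentially the same route as the paper's: exhibit an explicit potential $u^\pm_n=\sum_{j\geq n}\lambda_\pm^{-j}(v_0\circ f^{\pm j})$ from the iterated functional relation, bound $v_0$ via Lelong--Poincar\'e on the intermediate model $X_1$ using the vertical divisor $V^\pm$ of~\eqref{VerDiv}, and exploit $\pi\circ f^{\pm j}=\pi$ to sum the geometric series. The paper phrases the last step tersely (``pull back \eqref{inequalityLP}''), so your unwinding is the right reading, and your use of a global metric $\|\cdot\|$ on $\Ocal_{X_1}(V^\pm)$ is a legitimate alternative to the paper's direct manipulation of local defining functions $h^\pm_j$.

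There is one flaw to flag. The vertical divisor $V^\pm$ is \emph{not} effective in general --- \eqref{VerBound} only says $-\pi_1^*(N)<V^\pm<\pi_1^*(N)$, and the paper makes this explicit by writing $V=\sum e^+_jE^+_j-\sum e^-_jE^-_j$. Consequently the canonical section $s_{V^\pm}$ is merely meromorphic and $\|s_{V^\pm}\|$ is \emph{not} bounded above on $X_1$: it blows up like $|h^-_j|^{-e^-_j}$ along the negative components. As written, your upper bound on $v_0$ therefore fails. The repair is short: split $V^\pm=V^\pm_+-V^\pm_-$ into effective parts, write $\log\|s_{V^\pm}\|=\log\|s_{V^\pm_+}\|-\log\|s_{V^\pm_-}\|$, and apply your local coordinate comparison to both effective pieces. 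Each of $\|s_{V^\pm_+}\|$, $\|s_{V^\pm_-}\|$ is bounded above by compactness and below by $c|\pi(x)|^{-m}$, so
\[
|\log\|s_{V^\pm}\|(x)|\leq |\log\|s_{V^\pm_+}\|(x)|+|\log\|s_{V^\pm_-}\|(x)|\leq c_1\log^+|\pi(x)|+c_2,
\]
which is exactly the two-sided estimate \eqref{inequalityLP} the paper records with the constants $l^\pm=\max_j e^\pm_j$. With that correction the rest of the argument goes through as you wrote it.
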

\begin{proof}
Let us provide the proof for the forward Green current $\hat{T}_f^+$. Recall~\eqref{VerDiv} that we have $\varphi_1^* L^+- L^+_1 = \Ocal_{X_1}(V^+)$. Since there are only finitely many branched points $B\setminus \Lambda$, it suffices to study the convergence speed of $u_n$ around one such point $t_0$. Therefore we can assume that $\pi(V^+)=t_0=B\setminus \Lambda$.
Write
\begin{align*}
    V= \sum_{j=1}^{n^+_V}  e^+_j E^+_j - \sum_{j=1}^{n^-_V}  e^-_j E^-_j,
\end{align*}
where $e^\pm_j \in \N$ and $E^\pm_j$ are all the irreducible subvarieties of codimension 1 supported above $t_0.$

Let $p\in V^+$ and consider the germ ($V^+,p$) in a coordinate system centered at $p$. We reduce thus to the following situation: there is a holomorphic map $\pi :(\C^{\dim X},0) \to (\C,0)$, and an analytic (reducible) germ $V^+$ passing through $0$ such that $\pi(V^+)=0.$
Let $h^\pm_j$ be defining functions of $(E^\pm_j,0)$. The pullback of the divisor 0 by $\pi$ can be written as
\begin{align*}
    \pi^{-1}(0)= \sum_{j=1}^{n^+_V} a^+_j E^+_j + \sum_{j=1}^{n^-_V}  a^-_j E^-_j,
\end{align*}
where $a^\pm_j \geq 0$. Let $\omega^+$ be a smooth form representing $c_1(L^+)$. By  \eqref{VerDiv}, we have
\begin{align*}
    \frac{1}{\lambda_+} {g_1^+}^{*}\omega^+ - \varphi_1^{*}\omega^+ =[V^+]+dd^c u_0,
\end{align*}
where $u_0$ is an integrable function that is smooth on $X_{1,\Lambda}$. By Lelong-Poincar\'e equation, we have
\begin{align*}
    u_0 = -\left(\sum_{j=1}^{n_V^+}e^+_j\log|h_j^+| - \sum_{j=1}^{n_V^-}e^-_j\log|h_j^-| \right) +O(1).
\end{align*}
Hence, setting $l^\pm \coloneqq \max\{e^\pm_j \ |\ 1 \leq j\leq n^\pm_V \}$, we have
\begin{align}\label{inequalityLP}
    l^- \log|t\circ \pi| +O(1) \leq u_0\leq -l^+ \log|t\circ \pi| +O(1).
\end{align}
To conclude it suffices to pullback \eqref{inequalityLP} by $\frac{1}{\lambda_+^n}{(f^{n})}^*.$
\end{proof}
\begin{prop}\label{equalityMassHeight}
   For any marked point $\sigma$, we have $\hhat_f^\pm (\sigma) = \int_{\sigma(\Lambda)(\C)} \hat{T}_f^\pm$.
\end{prop}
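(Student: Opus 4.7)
The plan is to approximate $\hat{T}_f^\pm$ by the smooth form $\frac{1}{\lambda_\pm^n}(f^{\pm n})^*\omega^\pm$ via Lemma~\ref{DegeEstimate}, so that its integral over $\sigma(\Lambda)(\C)$ is close (modulo an error of order $1/\lambda_\pm^n$) to the rescaled height $\frac{1}{\lambda_\pm^n}h(f^{\pm n}\circ\sigma)$. The DSH cutoff $\Psi_r\circ\pi$ is used to make sense of the mass loss of $dd^c u_n^\pm$ near the punctures $B\setminus\Lambda$, in the spirit of \cite{gauthier2020geometric}.

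Fix $n\geq 1$ and $r\geq 1$. The identity $\hat{T}_f^\pm-\frac{1}{\lambda_\pm^n}(f^{\pm n})^*\omega^\pm=dd^c u_n^\pm$ of Lemma~\ref{DegeEstimate}, restricted to the complex curve $\sigma(\Lambda)(\C)\subset X(\C)$ (on which $\hat{T}_f^\pm$ is a well-defined positive measure by Bedford--Taylor) and integrated against the compactly supported cutoff $\Psi_r\circ\pi$, yields after Stokes' theorem
\[
\int_{\sigma(\Lambda)(\C)}(\Psi_r\circ\pi)\,\hat{T}_f^\pm = \frac{1}{\lambda_\pm^n}\int_{\sigma(\Lambda)(\C)}(\Psi_r\circ\pi)(f^{\pm n})^*\omega^\pm + \int_{\Lambda(\C)}(\sigma^*u_n^\pm)\,dd^c\Psi_r,
\]
where we used $\pi\circ\sigma=\id_B$ to identify $\sigma(\Lambda)$ with $\Lambda\hookrightarrow \A^N$. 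The key estimate is for the last integral: $dd^c\Psi_r=\frac{1}{r}(T_{2r}-T_r)$ is supported in $\{e^r\leq|t|\leq e^{2r}\}$, and since $\Lambda$ is a fixed algebraic curve in $\A^N$, the total variation of $dd^c\Psi_r$ restricted to $\Lambda$ is bounded by $C_\Lambda/r$ for some constant $C_\Lambda$ (this uses that $\log\max(|t|,e^s)-\log|t|$ is bounded and that $dd^c\log|t|$ has finite mass on $\Lambda$). On this support, Lemma~\ref{DegeEstimate} gives $|\sigma^*u_n^\pm|\leq (2C_1 r+C_2)/\lambda_\pm^n$, hence
\[
\left|\int_{\Lambda(\C)}(\sigma^*u_n^\pm)\,dd^c\Psi_r\right|\leq \frac{C}{\lambda_\pm^n}
\]
uniformly in $r\geq 1$.

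Now let $r\to\infty$. Since $\Psi_r\nearrow 1$ and $\hat{T}_f^\pm$ is positive, monotone convergence yields $\int(\Psi_r\circ\pi)\,\hat{T}_f^\pm\to\int_{\sigma(\Lambda)(\C)}\hat{T}_f^\pm$, which is thereby seen to be finite. For the first term on the right, change of variables gives $\int_{\sigma(\Lambda)(\C)}(f^{\pm n})^*\omega^\pm=\int_B(f^{\pm n}\circ\sigma)^*\omega^\pm=h(f^{\pm n}\circ\sigma)$ (the last equality since $\dim B=1$ and $(f^{\pm n}\circ\sigma)^*\omega^\pm$ is a smooth form on the compact curve $B$ representing $c_1((f^{\pm n}\circ\sigma)^*L^\pm)$), and dominated convergence with integrable dominating function $|(f^{\pm n}\circ\sigma)^*\omega^\pm|$ on $B$ gives $\int(\Psi_r\circ\pi)(f^{\pm n})^*\omega^\pm\to h(f^{\pm n}\circ\sigma)$. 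Combining,
\[
\left|\int_{\sigma(\Lambda)(\C)}\hat{T}_f^\pm-\frac{1}{\lambda_\pm^n}h(f^{\pm n}\circ\sigma)\right|\leq \frac{C}{\lambda_\pm^n},
\]
and letting $n\to\infty$ the right-hand side tends to $0$ while $\frac{1}{\lambda_\pm^n}h(f^{\pm n}\circ\sigma)\to\hhat_f^\pm(\sigma)$ by definition, yielding the identity.

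The main obstacle is the rigorous justification of Stokes' theorem/integration by parts for the merely continuous potentials $u_n^\pm$ and $\Psi_r$ on the non-compact Riemann surface $\sigma(\Lambda)(\C)$, together with the uniform mass bound on $dd^c\Psi_r|_\Lambda$; both are standard in pluripotential theory (via smooth approximation of DSH functions, \emph{cf.}~\cite{DSActa09}) but require care, especially since $\Lambda\subset\A^N$ rather than $\A^1$ a priori. A secondary subtlety is controlling the passage from the open curve $\sigma(\Lambda)$ to its compactification $\sigma(B)$ in $\overline{X}$, which is essential for converting currents to degrees.
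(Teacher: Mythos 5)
Your proposal is correct and follows essentially the same path as the paper's proof: test the identity $\hat T_f^\pm - \frac{1}{\lambda_\pm^n}(f^{\pm n})^*\omega^\pm = dd^c u_n^\pm$ of Lemma~\ref{DegeEstimate} against the DSH cutoff $\Psi_r\circ\pi$ on $\sigma(\Lambda)(\C)$, move the $dd^c$ onto $\Psi_r$, bound the resulting error by $O(\lambda_\pm^{-n})$ uniformly in $r$ via the $\log^+$ estimate on $u_n^\pm$ and the $r$-independent mass of $T_r$, then let $r\to\infty$ and $n\to\infty$. Your write-up simply fills in a few convergence details (monotone/dominated convergence, the identification of the $r\to\infty$ main term with $h(f^{\pm n}\circ\sigma)$) that the paper leaves implicit.
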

\begin{proof}
writing $\sigma$ for $\sigma(\Lambda)(\C)$, we have
\begin{align}\label{errorterm}
    \left |\left \langle \left (  \hat{T}^{\pm}_f - \frac{1}{d^n} {(f^{\pm n})}^{*}\omega^\pm \right ) \wedge [\sigma], \Psi_r\circ \pi \right \rangle \right | = \left|\left \langle u^{\pm}_n \wedge [\sigma], dd^c\left( \Psi_r \circ \pi \right ) \right \rangle \right |
\end{align}
Denote by $B(0,r)$ the ball of radius $r$ and center 0 in the affine space $\A^N(\C)$. Then by Lemma \ref{DegeEstimate}, the error term \eqref{errorterm} is less than
\begin{align*}
    &\leq \frac{1}{r \lambda_\pm^n}(3rC_1 + C_2) \int_{\Lambda(\C) \cap B(0,e^{3r})} T_{2r} + T_r \leq \frac{C_3}{\lambda_\pm^n},
\end{align*}
for some constant $C_3 >0$ independent of the radius $r$. The conclusion follows by letting $r\to \infty.$ 
\end{proof}

\bibliographystyle{plain}
\bibliography{Mybio}
\end{document}